\definecolor{red}{rgb}{1,0,0}
\definecolor{green}{rgb}{0,1,0}
\definecolor{blue}{rgb}{0,0,1}
\definecolor{refkey}{gray}{.625}
\definecolor{labelkey}{gray}{.625}
\newcommand{\abs}[1]{\lvert#1\rvert}
\newcommand{\A}{\mathscr{A}}
\newcommand{\B}{\mathcal{B}}
\newcommand{\id}{\operatorname{id}}
\newcommand{\Z}{\mathbb{Z}}
\newcommand{\g}{\mathfrak{g}}
\newcommand{\largeg}{\mathfrak{L}}
\newcommand{\h}{\mathfrak{g}}
\newcommand{\LM}{\mathcal{LM}}
 \def\title@font{\normalsize\bfseries}
 \let\ltx@maketitle\@maketitle
 \def\@maketitle{\bgroup%
 \let\ltx@title\@title%
 \def\@\title{\resizebox{\textwidth}{!}{%
  \mbox{\title@font\ltx@title}%
 }}%
 \ltx@maketitle%
 \egroup}
\theoremstyle{plain}
\newtheorem{lem}[equation]{Lemma}
\newtheorem{Cor}[equation]{Corollary}
\newtheorem{Thm}[equation]{Theorem}
\newtheorem{theorem}{Theorem}
\newtheorem*{theorem*}{Theorem}
\newtheorem{Def}[equation]{Definition}
\newtheorem{def-prop}[equation]{Definition-Proposition}
\newtheorem{prop}[equation]{Proposition}
\newtheorem{prop-def}[equation]{Proposition-Definition}
\newtheorem{Ex}[equation]{Example}
\newtheorem{Rem}[equation]{Remark}
\numberwithin{equation}{section}
\theoremstyle{remark}
\begin{document}
\def\C{\mathbb{C}}
\def\CE{\mathrm{CE}}
\def\D{\mathcal{D}}
\def\E{\mathscr{E}}
\def\F{\mathscr{F}}
\def\H{\textbf{H}}
\def\k{\mathbb{K}}
\def\G{\mathcal{G}}
\def\M{\mathcal{M}}
\def\m{\mathfrak{m}}
\def\t{\mathfrak{t}}
\def\O{\mathcal{O}}
\def\P{\mathcal{P}}
\def\U{\mathcal{U}}
\def\V{\mathscr{M}}
\def\L{\mathcal{L}}
\def\X{\mathbb{X}}
\def\Y{\mathbb{Y}}
\def\spec{\text{spec}}
\def\Im{\text{Im}}
\def\coker{\operatorname{coker}}
\def\dg{\mathbf{dg}}
\def\Ho{\operatorname{Ho}}
\def\fin{\operatorname{f}}
\def\Ext{\operatorname{Ext}}
\def\End{\operatorname{End}}
\def\pr{\operatorname{pr}}
\def\ad{\operatorname{ad}}
\def\id{\operatorname{id}}
\def\cn{\operatorname{cn}}
\def\Der{\operatorname{Der}}
\def\Hom{\operatorname{Hom}}
\def\Jet{\operatorname{Jet}}
\def\dgLie{\mathbf{dgLie}}
\def\gLie{\mathbf{gLie}}
\def\Cone{\operatorname{Cone}}
\def\dgLeib{\mathbf{dgLeib}}
\def\LM{\mathcal{L}\mathcal{M}}
\def\Map{\operatorname{Map}}
\def\Mod{\mathbf{Mod}}
\def\lfMod{\mathbf{lfMod}}
\def\Rep{\operatorname{Rep}}
\def\Module{\mathbf{Mod}}

\def\RN{\operatorname{RN}}
\def\sgn{\operatorname{sgn}}
\def\sh{\operatorname{sh}}

\newcommand{\tot}{\mathrm{tot}}
\newcommand{\img}{\mathrm{img}}
\newcommand{\mRepginfty}{\mathbf{Mod}_{\g,\infty}}
\newcommand{\Linfty}{\mathcal{L}_{\infty}}
\newcommand{\bb}{\mathfrak{B}}
\newcommand{\TminusoneBg}{T_{[-1]} B\g}
\newcommand{\LP}{Loday-Pirashvili }
\newcommand{\LPM}{\mathbf{LP}}
\newcommand{\LF}{locally finite}
\newcommand{\fgsMod}{\mathbf{fgsMod}}
\newcommand{\firstC}{C}
\newcommand{\secondC}{C}
\newcommand{\dgalgcat}{\mathbf{dgAlg}}

\newcommand{\dadjoint}{d^{\mathrm{ad}}}

\newcommand{\Leib}{\mathbf{Leib}}

\newcommand{\RT}{\mathbf{RT}}

\newcommand{\Oder}{\mathbf{ML}}
\newcommand{\weakLPmodg}{\mathbf{wLP}_{\g}}
\newcommand{\weakLPmodgprime}{\mathbf{wLP}_{\g^\prime}}

\newcommand{\MO}{{monotonic labeling}}
\newcommand{\tohandle}[1]{\textcolor{red}{#1}}
\newcommand{\doubleabs}[1]{\lVert  #1  \rVert}
\newcommand{\tanH}{H_{\mathrm{tan}}}

\title[Locally finite infinity-modules and weak Loday-Pirashvili modules]{Locally finite infinity-modules and weak Loday-Pirashvili modules over differential graded Lie algebras}

\author{Zhuo Chen}
\address{Department of Mathematics, Tsinghua University, Beijing 100084, China}
\email{\href{chenzhuo@mail.tsinghua.edu.cn}{chenzhuo@tsinghua.edu.cn}}

\author{Yu Qiao*}
\address{(Corresponding author) School of Mathematics and Statistics, Shaanxi Normal University, Xi'an 710119, China}
\email{\href{yqiao@snnu.edu.cn}{yqiao@snnu.edu.cn}}

\author{Maosong Xiang}
\address{School of Mathematics and Statistics, Center for Mathematical Sciences, Huazhong University of Science and Technology, Wuhan 430074, China}
\email{\href{mailto: msxiang@hust.edu.cn}{msxiang@hust.edu.cn}}

\author{Tao Zhang}
\address{College of Mathematics and Information Science, Henan Normal University, Xinxiang 453007, China}
\email{\href{mailto:zhangtao@htu.edu.cn}{zhangtao@htu.edu.cn}}

\thanks{Research partially supported by NSFC grants 12071241 (Chen), 11971282 (Qiao), 11901221 (Xiang), and 11961049 (Zhang).}

\begin{abstract}
Motivated by recent developments of $\infty$-categorical theories  related to differential graded (dg for short) Lie algebras,   we develop a general framework for locally finite $\infty$-$\g$-modules over a dg Lie algebra $\g$.  We show that the category of such locally finite $\infty$-$\g$-modules   is   almost a model category in the sense of Vallette. As a homotopy theoretical generalization of Loday and Pirashvili's  Lie algebra objects in the tensor category of linear maps, we further study weak {\LP modules}  consisting of {$\infty$-morphisms} from {\LF} $\infty$-$\g$-modules to the adjoint module   $\g$. From the category of such weak \LP modules over $\g$, we find a functor that maps to the category   of Leibniz$_\infty$ algebras enriched over the Chevalley-Eilenberg dg algebra of $\g$. This functor can be regarded as the homotopy lifting of Loday and Pirashvili's original method to realize Leibniz algebras from Lie algebra objects in the category  of linear maps.
\end{abstract}

\maketitle

\subjclass{{\em Mathematics Subject Classification} (2020): Primary 17B55; Secondary 17A32, 18G35, 18N40.}
\\
\keywords{{\em Keywords}: dg Lie algebra, locally finite $\infty$-module, weak Loday-Pirashvili module, model category, Leibniz$_\infty$ algebra.}

\setcounter{tocdepth}{3}
\setcounter{secnumdepth}{3}

\tableofcontents

\section*{Introduction}
The subject for investigation in this paper is concerned with   differential graded (abbreviated as ``dg'') Lie algebras and a generalized type of their modules. We first briefly review the background  and significance of dg Lie algebras. In fact, they appear widely in deformation theory due to the  principle by Gerstenhaber, Quillen, Deligne, Drinfeld, Feigin, Kontsevich and others,  that  every deformation problem (in characteristic $0$) is controlled by a dg Lie algebra via solutions of Maurer-Cartan equation modulo gauge equivalence  and  with quasi-isomorphic dg Lie algebras giving the same deformation problem (\cites{Gerstenhaber1964,Quillen1969,Goldman-Millson,Deligne}). This general heuristic way of thinking has powerfully influenced the works of Hinich \cite{Hinich}, Kontsevich-Soibelman \cites{KS-1,KS-2}, Manetti \cites{Manetti-1,Manetti-2,Manetti-3}, and many others.
For a precise formulation of this fact in the language of higher category theory, see a nice synopsis from Lurie's 2010 ICM address  \cites{DAGX} or \cites{DAGX2}. See also Pridham's independent work \cite{Pridham}. For  more about the derived deformation theory (DDT) program studying formal moduli problems with the central concept of  Koszul duality between commutative algebras and dg Lie algebras, see the references \cites{BarannikovK1998,Fontanine-Kapranov,CalaqueGrivaux}.
Recent progresses in \cites{BMnew,BR} shed light on   applications of finite dimensional dg Lie algebras and their representations.

Motivated by these remarkable works and ideas,  our research focuses on dg Lie algebras and their representations, and is particularly inspired by the papers \cites{RUTHAbdCrainic,Mehta}, where it is explained, based on Quillen's superconnections \cite{Quillen1985}, what   the ``right'' definition of a Lie algebra representation up to homotopy is, and by \cite{PaviaMixed}, where it is studied in depth the $\infty$-categorical construction of mixed graded structures
(in the sense of \cites{PTVV13, CalaqueGrivaux, CalaquePantevToenVaquieVezzosiJT}) over Chevalley-Eilenberg complexes.
We think it necessary to specialize in  generalized modules of dg Lie algebras, and this article is devoted to locally finite $\infty$-modules --- Let $\g$ be a dg Lie algebra and $d_{\CE}$ the differential on the Chevalley-Eilenberg algebra $C(\g)=\sum_{i=0}^\infty C^i(\g):=\sum_{i=0}^\infty S^i(g^\vee[-1])$.
By saying a locally finite $\infty$-$\g$-module, we mean a graded space $M$ together with a family of degree $(+1)$ linear maps $d^M_i:$ $M\to C^i(\g,M):=C^i(g )\otimes M $ such that $d_{\tot}^M :=d_{\CE} +\sum_{i=0}^\infty d_i^M$, called the \textbf{total differential}, defines a square zero derivation on $C(\g,M):=C(\g)\otimes M$, and for each $m\in M$, there are only finitely many $d_i^M(m)$ which are nontrivial (locally finiteness). For details, see Definition \ref{Def:lfinftygmodule}. The local finiteness condition ensures that $d_{\tot}^M$ is well-defined.
See~\cite{BM} for a similar condition.

Locally finite $\infty$-modules naturally arise when we study the realization of modules over a dg Lie algebra.
Consider the Chevalley-Eilenberg functor $C(\g,-)$ from the category $\Module_{\g}$ of (ordinary) $\g$-modules to the category $\Mod_{C(\g)}$ of dg $C(\g)$-modules.
Although this functor is faithful, it is not full. To get a fully faithful extension, one has to enlarge the category $\Module_{\g}$ to a bigger one ${\mRepginfty}$ by adding $\infty$-morphisms  (also known as weak morphisms)  of $\g$-modules with a  local finiteness constraint. In doing so, we obtain a \textit{minimal fully faithful extension}  of $C(\g,-)$.
The category $\mRepginfty$ is still insufficient for our purpose because  pullbacks of (trivial) fibrations in this category do not exist in general.
We find that just adding {\LF} $\infty$-$\g$-modules would meet our requirements ---  First, define  the category $\lfMod_\g^\infty$ of {\LF} $\infty$-$\g$-modules whose arrows are {$\infty$-morphisms} (see Definition \ref{Def-Prop:inftymorphism}). Roughly speaking,
an  \textbf{$\infty$-morphism}  of {\LF} $\infty$-$\g$-modules $\phi\colon  M \rightsquigarrow  {M^\prime}$ is a set of linear maps
\begin{equation}\label{Eqt:phiphik}
	\phi = \{\phi_k   \colon M\to C^{ k} (\g,M^\prime) \}_{k\geqslant 0}
\end{equation}
subject to some natural constraint (intertwining the relevant total differentials). Second,  we show that the classical Chevalley-Eilenberg functor $C(\g,-)$ admits a fully faithful extension, again denoted by $\secondC(\g,-)$, from $\lfMod_\g^\infty$ to $\Mod_{C(\g)}$, which gives rise to an equivalence between the category $\lfMod_\g^{\infty}$ of {\LF} $\infty$-$\g$-modules and the subcategory $\fgsMod_{C(\g)}$ of semifree dg $C(\g)$-modules (see Proposition~\ref{prop: CE functor from infty modules}).

There is a natural definition of weak equivalence in the category $\lfMod_\g^\infty$:
an \textbf{$\infty$-morphism} as in \eqref{Eqt:phiphik} is called a \textbf{weak equivalence} if the cochain map $\phi_0\colon (M, d^M_{0}) \to (M^\prime, d^{M^\prime}_{0})$ is a quasi-isomorphism of cochain complexes. Weakly equivalent objects in such situations
are regarded as ``being the same'',  in view of the sophisticated concepts in model category theory \cites{DwyerSpalinski,Hirschhorn,Hovey}, which  is a powerful framework   describing
categories with weak equivalences together with categorical constructions that are
compatible with the weak equivalences.

The aims of this paper are two-folded: to show that $\lfMod_\g^\infty$ admits \textit{almost a model category structure\footnote{However, the category $\lfMod_\g^\infty$ does not admit a model category structure, since it does not necessarily have all small limits and colimits.}} in the sense of Vallette~\cite{Vallette}  and to illustrate the immense relevance of such structure both from a conceptual perspective and from a more practical point of view. Here is our first main theorem.
\begin{theorem}[Theorem~\ref{Thm: Repinftyg is fibrant}]
	The category $\lfMod_\g^{\infty}$ of {\LF} $\infty$-modules over a dg Lie algebra $\g$ is  a category of fibrant objects   in the sense of Brown~\cite{Brown}, in which a  morphism $\phi$ as in \eqref{Eqt:phiphik}
is defined to be  a fibration if   the cochain map $\phi_0 \colon (M,d^M_{0}) \to (M^\prime, d^{M^\prime}_{0})$ is degreewise surjective.
	
We call $\phi$   a cofibration if   the cochain map $\phi_0 \colon (M,d^M_{0}) \to (M^\prime, d^{M^\prime}_{0})$ is degreewise injective. Then the category $\lfMod_\g^{\infty}$ is almost a model category in the sense of Vallette~\cite{Vallette}, and each object in $\lfMod_\g^{\infty}$ is both fibrant and cofibrant\footnote{We believe that this result is known to experts but we could not find a complete statement and proof in the existing literature.}.
\end{theorem}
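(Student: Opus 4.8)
The plan is to verify directly the axioms of a category of fibrant objects in the sense of Brown~\cite{Brown}, transporting the statements that are easiest to check to the category $\Mod_{C(\g)}$ of dg $C(\g)$-modules through the fully faithful embedding $C(\g,-)\colon\lfMod_\g^\infty\to\Mod_{C(\g)}$ of Proposition~\ref{prop: CE functor from infty modules}, whose essential image is the semifree subcategory $\fgsMod_{C(\g)}$. Several axioms are then immediate. The zero module is a terminal (indeed zero) object, and for every $M$ the morphism $M\rightsquigarrow 0$ has leading term the surjection $M\to 0$, so every object is fibrant; dually $0\rightsquigarrow M$ has injective leading term, so every object is cofibrant. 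Since composition of $\infty$-morphisms corresponds under $C(\g,-)$ to composition of dg module maps and the leading term is multiplicative, $(\psi\circ\phi)_0=\psi_0\circ\phi_0$, the classes of fibrations (degreewise surjective $\phi_0$) and weak equivalences (quasi-isomorphic $\phi_0$) are closed under composition, contain all isomorphisms, and weak equivalences satisfy two-out-of-three, all inherited from the corresponding facts for cochain complexes. I would also record the comparison lemma that $\phi$ is a weak equivalence if and only if $C(\g,\phi)$ is a quasi-isomorphism of dg modules, proved by comparing the spectral sequences of the word-length filtration of $C(\g)$, whose first page is controlled by $\phi_0$.

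The substantive axiom, and the reason the enlargement to $\lfMod_\g^\infty$ was necessary, is the existence and stability of pullbacks along fibrations. Given a fibration $g\colon M'\rightsquigarrow M''$ and an arbitrary $\infty$-morphism $f\colon M\rightsquigarrow M''$, I would form the pullback of $G:=C(\g,g)$ and $F:=C(\g,f)$ in $\Mod_{C(\g)}$, namely $P:=C(\g,M')\times_{C(\g,M'')}C(\g,M)$, and show that $P$ is again semifree, hence of the form $C(\g,N)$ for a locally finite $\infty$-module $N$. Because $C(\g,M')$ and $C(\g,M'')$ are free graded $C(\g)$-modules, $G$ is degreewise surjective precisely because its leading term $g_0$ is, and its kernel is free: writing $M'=\ker g_0\oplus s_0(M'')$ for a splitting $s_0$ of $g_0$, the restriction of $G$ to the free submodule generated by $s_0(M'')$ is upper triangular with identity leading term with respect to the word-length filtration, and \emph{here the local finiteness is essential}, as it makes this unipotent comparison map locally nilpotent and hence invertible, furnishing a graded section $\sigma$ of $G$ with $\ker G\cong C(\g)\otimes\ker g_0$ free. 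Consequently $a\mapsto(\sigma F(a),a)$ splits the projection $P\to C(\g,M)$, giving $P\cong\ker G\oplus C(\g,M)\cong C(\g)\otimes(\ker g_0\oplus M)$ as graded modules; as the differential on $P$ is the restriction of the locally finite total differentials on the two factors, it is again a locally finite total differential on the generating space $N=\ker g_0\oplus M$, so $P\in\fgsMod_{C(\g)}$. The projection to $C(\g,M)$ has split surjective leading term, hence is a fibration; and if moreover $g$ is a weak equivalence then $G$ is a surjective quasi-isomorphism, so $\ker G$ is acyclic and the short exact sequence $0\to\ker G\to P\to C(\g,M)\to 0$ shows the projection is a quasi-isomorphism, i.e.\ a trivial fibration. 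I expect this preservation of semifreeness, powered by local finiteness, to be the main obstacle.

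With pullbacks of fibrations in hand, binary products exist as pullbacks over the terminal object $0$, so $B\times B\cong C(\g,M\oplus M)$ for $B=C(\g,M)$. For the remaining path-object axiom I would set $B^I:=C(\g,M\otimes I)$, where $I=\langle f\rangle\oplus\langle e_0,e_1\rangle$ is the finite-dimensional interval complex with $\deg f=-1$, $\deg e_0=\deg e_1=0$ and $df=e_1-e_0$; tensoring $M$ with $I$ preserves local finiteness of the $\infty$-module structure. The diagonal factors as $B\xrightarrow{\,\id\otimes s\,}B^I\xrightarrow{\,\id\otimes\pi\,}B\times B$, where $s(1)=e_0+e_1$ is a quasi-isomorphism $\k\to I$ and $\pi(e_0)=(1,0)$, $\pi(e_1)=(0,1)$, $\pi(f)=0$ is a degreewise surjection with $\pi\circ s=\Delta$; thus $\id\otimes s$ is a weak equivalence and $\id\otimes\pi$ a fibration, completing the path object and establishing that $\lfMod_\g^\infty$ is a category of fibrant objects.

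Finally, to upgrade this to an almost model structure in the sense of Vallette~\cite{Vallette}, I would run the dual arguments: the cofibrations (degreewise injective $\phi_0$) are closed under composition, and pushouts of cofibrations stay semifree by the same filtration-and-local-finiteness argument applied now to cokernels (the image of $C(\g,\phi)$ splits off as a free summand when $\phi_0$ is injective), while cylinder objects are produced from the same interval $I$. The mapping path and mapping cylinder constructions then yield the two factorizations of an arbitrary $\phi$ into a (trivial) cofibration followed by a (trivial) fibration, and the lifting properties follow from the bifibrancy of all objects. The qualifier \emph{almost} only reflects that $\lfMod_\g^\infty$ lacks general small limits and colimits; but all (co)limits entering Vallette's axioms, namely pullbacks of fibrations and pushouts of cofibrations, have been shown to exist and to preserve local finiteness, so the verification goes through and every object is simultaneously fibrant and cofibrant.
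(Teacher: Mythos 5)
Your overall architecture (transport along $C(\g,-)$ into $\Mod_{C(\g)}$, a Dupont-type interval for path objects, mapping-cylinder factorizations) runs parallel to the paper's, and the easy axioms plus the path object are fine. But the core of your pullback step rests on three assertions that are false in the locally finite (compactly supported) setting, and all three fail for the same reason: $C(\g,M)=\oplus_k C^k(\g)\otimes M$ is a direct \emph{sum}, so the weight filtration is exhaustive but not complete, and weight-raising ``unipotent'' perturbations cannot be inverted inside it. One example kills all three claims. Let $\g=\k e$ be the one-dimensional abelian dg Lie algebra concentrated in degree $1$ with zero differential, so $C(\g)=\k[x]$ with $x:=e^\vee[-1]$ of degree $0$ and $d_{\CE}=0$. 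Let $M=\k u\oplus\k v$ with $\abs{u}=0$, $\abs{v}=1$, $d_0u=v$; set $M'':=(M,d_0)$ with trivial higher structure and $M':=(M,d_0+d_1)$ with $d_1u=x\otimes v$, $d_1v=0$. Define $g:=\id_M+g_1\colon M'\rightsquigarrow M''$ by $g_1(u)=x\otimes u$, $g_1(v)=0$; one checks $G\circ d^{M'}_{\tot}=d^{M''}_{\tot}\circ G$ for $G:=C(\g,g)$, and $g$ is a trivial fibration since $g_0=\id_M$. Then: (i) $G$ is \emph{not} degreewise surjective, since $G(pu+qv)=p(1+x)u+qv$, so $u\notin\mathrm{img}(G)$; surjectivity of $g_0$ forces surjectivity of $G$ only for strict fibrations. (ii) Your load-bearing claim that ``local finiteness \dots makes this unipotent comparison map locally nilpotent and hence invertible'' is exactly what is false: here $N(u)=x\otimes u$ gives $N^k(u)=x^k\otimes u\neq 0$ for all $k$, and the formal inverse $\sum_k(-N)^k$ takes values in $\k[[x]]\otimes M$, outside the compactly supported complex. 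Local finiteness only says each fixed element is annihilated by all but finitely many structure maps; it does not make iterated, $C(\g)$-linearly extended, weight-raising compositions terminate. (iii) Your comparison lemma fails as well: $g_0=\id$ is a quasi-isomorphism, yet $H(C(\g,M'),d_{\tot})=\k[x]v/(1+x)\k[x]v\cong\k$ while $H(C(\g,M''),d_{\tot})=0$, so $G$ is not a quasi-isomorphism; the word-length spectral sequence has the page you describe but does not converge, precisely because completeness fails (it would hold for $\widehat{S}$, i.e.\ products, not sums).

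By contrast, the paper's proof is organized so that $C(\g,-)$ is never applied to a general fibration: a fibration is first replaced, up to $\infty$-isomorphism, by a strict one (Lemma~\ref{Lem: Strictify fibration}), and pullbacks are constructed only along strict fibrations (Proposition~\ref{prop: pullback of strict fibrations}), where $C(\g,\phi)=\id_{C(\g)}\otimes\phi_0$ is honestly surjective with free kernel $C(\g)\otimes\ker\phi_0$ and every correction is a single application of the $C(\g)$-linear extension of a weight-zero section, with no infinite iteration; moreover fibrations, weak equivalences, and the trivial-fibration case of pullback stability are all detected on leading terms, so no quasi-isomorphism transfer along $C(\g,-)$ of the kind in (iii) is ever invoked. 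I should add a caveat: the recursion $\Psi_{k+1}=-\sum\Phi_0^{-1}\Phi_q\Psi_p$ in the paper's own strictification lemma faces the very same convergence question (in the example above $\Psi_k(u)=(-x)^k\otimes u$, and indeed this $g$ admits no strictification at all, since $u$ has no $G$-preimage), so this convergence issue is the genuinely delicate point of the whole theorem rather than a technicality. Your proposal, however, promotes the unproved (and, as stated, false) claims (i)--(iii) to explicit steps, so as written the argument does not go through; any repair must either constrain the morphisms involved or pass to completed coefficients, where unipotent maps do invert.
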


Another important source of inspiration is the close   connection between Leibniz algebras and  Lie algebras revealed by the works  of   Loday and Pirashvili \cites{LPu,LP}. Here is a quick sketch ---
The notion of Leibniz algebras (also known as Loday algebras)   is  a non-commutative generalization of Lie algebras (cf. \cite{Loday}). In \cite{LPu}, Loday and Pirashvili constructed and studied the universal enveloping algebra functor $UL$ from the category of Leibniz algebras to that of associative algebras.
Unlike the universal enveloping algebra functor $U$ from the category of Lie algebras to that of associative algebras, the functor $UL$ does not factor through the category of cocommutative Hopf algebras.
In order to understand the factorization nature of Leibniz algebras, Loday and Pirashvili introduced in~\cite{LP} the tensor category $\mathcal{LM}$ of linear maps. They proved that there exists an adjunction between the category of Leibniz algebras and the category of Lie algebra objects in $\mathcal{LM}$, such that the functor $UL$ factors through the category of Hopf objects in $\mathcal{LM}$.
Moreover, unravelling a  Lie algebra object in $\mathcal{LM}$ yields a morphism  from   a Lie algebra module to the Lie algebra itself viewed as the adjoint module. In this article and in  our previous work \cite{Qiaoold}, we refer to such a  morphism as  a \textit{\LP module}.
The adjunction between the category of Leibniz algebras and that of Lie algebra objects in $\mathcal{LM}$, can be enhanced to the dg setting. That is, there exists an adjunction between the category of dg Leibniz algebras and the category of morphisms of modules of a dg Lie algebra with target being the adjoint module (see Section \ref{Sec:adjunction}).
We call the latter one the category of   {\LP modules} (over a given dg Lie algebra), and denote it by ${\LPM }$. Objects in ${{\LPM}}$ can be viewed as adjoint module points in the category of ordinary dg Lie algebra modules in the sense of functor of points due to Grothendieck.

Note that the adjoint module $\g$ is a particular locally finite $\infty$-$\g$-module.
Inspired by Loday and Pirashvili's foundational minds, as reviewed earlier, we focus on  ``adjoint-points" in the category $\lfMod_\g^\infty$. Such a ``point'' is just an   {$\infty$-morphism} $f\colon M \rightsquigarrow \g$ from a {\LF} $\infty$-$\g$-module $M$ to the adjoint module $\g$, and we call it  a \textit{weak \LP module}. To expand, $f$   consists of a family of maps $\{f_k \colon M \to C^k(\g; \g)\}_{k\geqslant 0}$ satisfying certain conditions (see Definition~\ref{Def: wLP}).
The collection of such {$\infty$-morphisms} forms a category, which we call the category of \textit{weak {\LP modules}} and denote by ${\weakLPmodg}$.
Two weak \LP modules over $\g$ are said to be homotopic if they are homotopic as $\infty$-morphisms of {\LF} $\infty$-$\g$-modules.

It is worth pointing out that weak \LP modules  have a geometric characterization in formal dg geometry. In fact, each dg Lie algebra $\g$ can be viewed as a (formal pointed) dg manifold $B\g = (\g[1],d_{\CE})$~\cite{MSX}, whose space of functions $C^\infty(B\g)$ is naturally identified with the commutative dg algebra $C(\g)$. Moreover, we have the following facts.
\begin{enumerate}
	\item The category $\fgsMod_{C(\g)}$ of finitely generated and semifree  $C(\g)$-modules is equivalent to the category $\mathbf{dgVec}_{B\g}$ of dg vector bundles over $B\g$;
	\item The semifree $C(\g)$-modules $C(\g,\g)$ is identified with the space of global sections of the shifted tangent bundle $\TminusoneBg $ of the dg manifold $B\g$; and
	\item The category ${\weakLPmodg}$ is equivalent to the category $\left(\mathbf{dgVec}_{B\g} \right)_{ C(\g,\g) }$ of dg vector bundles of $B\g$ over its shifted tangent bundle $\TminusoneBg $ (see Proposition~\ref{prop-def via dg bundle}).
\end{enumerate}

As the second main result of this paper,  we find that Leibniz$_\infty$ algebras show  up naturally from the category $\weakLPmodg$ of weak \LP modules over $\g$.    Here the notion of  Leibniz$_\infty$ algebras (also known as homotopy Leibniz   or Loday infinity algebras) is a homotopy replacement  of that of ordinary Leibniz algebras.
The category of Leibniz$_\infty$ algebras  enjoys some nice phenomena as its subcategory of $\Linfty$  algebras first introduced by Lada and Stasheff ~\cite{LS} (also known as strongly homotopy Lie algebras), e.g., the minimal model theorem  proved by Ammar and Poncin~\cite{AP}. However, unlike $\Linfty$ algebras which abound in various formal deformation theories, the method that produces Leibniz$_\infty$ algebras but not $\Linfty$ algebras  is relatively rare in existing research  (see~\cite{Uchino} for a construction from derived bracket).
Our point  is to show an interesting mechanism of how to obtain Leibniz$_\infty$ algebras ---  Arising from any weak {\LP module} $f \colon M \rightsquigarrow \g$ we find a canonical Leibniz$_\infty$ algebra $C(\g,M)$ which is in general not an $\Linfty$ algebra, and its unary bracket is the total differential $d^M_{\tot}$. The details are given in Section \ref{Section: HLA}, and
the process of constructing  Leibniz$_\infty$ algebras constitutes a functor with good properties.
\begin{theorem}[Theorem~\ref{Thm:wLP to HLA}]\label{Thm B}
	Given a dg Lie algebra $\g$, there exists a functor
	\[
	\mathcal{G}_\g^\infty \colon  \weakLPmodg \to {\Leib^\infty_{C(\g)}}
	\]
	from the category $\weakLPmodg$ of weak \LP modules to the category ${\Leib^\infty_{C(\g)}}$ of Leibniz$_\infty$ algebras over the commutative dg algebra $C(\g)$ which is  homotopy invariant (i.e., $\mathcal{G}_\g^\infty$ sends homotopic weak \LP modules to the same Leibniz$_\infty$ algebra over $C(\g)$).
\end{theorem}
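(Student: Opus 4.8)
The plan is to build $\mathcal{G}_\g^\infty$ explicitly on objects and morphisms and to verify the Leibniz$_\infty$ axioms, functoriality, and homotopy invariance directly, using the strict Loday--Pirashvili bracket $[v,w]=f(v)\cdot w$ as the leading term of the construction. First I would realize all the data over the base commutative dg algebra $\secondC(\g)$. Since $\g$ is a genuine dg Lie algebra, its adjoint $\infty$-module realizes to $\secondC(\g,\g)=\secondC(\g)\otimes\g$, which is a dg Lie algebra over $\secondC(\g)$: its unary bracket is $d^{\g}_{\tot}$ and its binary bracket is the $\secondC(\g)$-bilinear extension of the bracket of $\g$. By Proposition~\ref{prop: CE functor from infty modules} the module $\secondC(\g,M)$ is a semifree dg $\secondC(\g)$-module carrying an $\infty$-action of this dg Lie algebra, whose higher homotopy coherences $\rho_n$ are precisely repackaged from the structure maps $d^M_i$. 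Finally, by full faithfulness of $\secondC(\g,-)$, the weak \LP module $f$ realizes to a \emph{strict} $\secondC(\g)$-linear cochain map $\hat f=\secondC(\g,f)\colon \secondC(\g,M)\to \secondC(\g,\g)$.

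Second, on $V:=\secondC(\g,M)$ I would define the Leibniz$_\infty$ brackets by a higher derived-bracket formula: the unary bracket is the total differential, and each higher bracket applies $\hat f$ to all but the last argument and feeds the outputs, together with the last argument, into the corresponding coherence of the $\infty$-action,
\[
l_1 = d^M_{\tot},\qquad
l_n(v_1,\dots,v_n)\;=\;\pm\,\rho_n\bigl(\hat f(v_1),\dots,\hat f(v_{n-1});\,v_n\bigr)\quad(n\geqslant 2),
\]
with Koszul signs to be fixed. This specializes to $l_2(v,w)=\hat f(v)\cdot w$, the $\secondC(\g)$-linear incarnation of the classical Loday--Pirashvili bracket, and the built-in asymmetry between the last slot and the others is exactly what produces a Leibniz$_\infty$ structure rather than an $\Linfty$ one.

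The step I expect to be the main obstacle is the verification of the full tower of Leibniz$_\infty$ relations among the $l_n$, which I would not attempt termwise. Instead I would encode $\{l_n\}$ as a single degree $(+1)$, $\secondC(\g)$-linear coderivation $D_f$ on the cofree conilpotent coalgebra over the Koszul dual cooperad of the Leibniz operad, cogenerated by $V[1]$, so that the Leibniz$_\infty$ axioms become the single equation $D_f^2=0$. I would then show, by induction on the arity (using that a coderivation on a cofree coalgebra is determined by its corestriction to the cogenerators), that $D_f^2=0$ is equivalent to the conjunction of three inputs already in hand: the square-zero condition $(d^M_{\tot})^2=0$ defining the $\infty$-module $M$; the $\infty$-morphism equation for $f$ intertwining the total differentials; and the dg Lie and module compatibilities of $\g$ acting on $M$. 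The genuine labor here is the Koszul sign bookkeeping matching the homogeneous components of $D_f^2$ with these three conditions. The requisite $\secondC(\g)$-multilinearity of all brackets is comparatively immediate, following from the $\secondC(\g)$-linearity of $\hat f$ and of the coherences $\rho_n$.

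Finally I would define $\mathcal{G}_\g^\infty$ on arrows by sending a morphism of weak \LP modules, i.e.\ an $\infty$-morphism $\phi\colon M\rightsquigarrow M'$ compatible with the maps to $\g$, to its realization $\secondC(\g,\phi)\colon \secondC(\g,M)\to \secondC(\g,M')$, and check that this strict $\secondC(\g)$-linear map intertwines $D_f$ and $D_{f'}$, hence is a morphism of Leibniz$_\infty$ algebras over $\secondC(\g)$; preservation of identities and of composition is then inherited from the functoriality of $\secondC(\g,-)$. For homotopy invariance I would take a homotopy between weak \LP modules $f\simeq f'$, realize it to a $\secondC(\g)$-linear chain homotopy $\hat h$ between $\hat f$ and $\hat f'$, and feed $\hat h$ into the derived-bracket formulas to compare $D_f$ and $D_{f'}$. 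The delicate point --- and the reason this yields literally the same Leibniz$_\infty$ algebra rather than merely an $\infty$-isomorphic one --- is that, after invoking the square-zero relations, the homotopy contributions to $D_f-D_{f'}$ must be shown to cancel on cogenerators; establishing this cancellation from the precise form of the homotopy relation in $\lfMod_\g^\infty$ is the concluding step.
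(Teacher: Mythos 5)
Your object-level construction is essentially the paper's. The paper turns $f$ into a dg derivation $\delta$ of $C(\g)$ valued in $\secondC(\g,M^\vee[-1])$, forms the $\delta$-connection $\nabla$, and applies the Kapranov functor of \cite{CLX}; the resulting brackets are exactly your derived-bracket formulas, namely $\lambda_1=d_{\tot}^{M[1]}$ and, on generators,
$\lambda_n(m_1[1],\dots,m_n[1])=\pm\,\iota_{f(m_1)[1]}\cdots\iota_{f(m_{n-1})[1]}\bigl(\sum_{k\geqslant 1}d_k^M m_n\bigr)[1]$
as in Equation~\eqref{align:firstlambdan}. Where the paper cites \cite{CLX} for the Leibniz$_\infty[1]$ relations, you propose a direct coderivation computation; that is a legitimate substitute (all the labor is deferred, but the inputs you list are the right ones). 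One imprecision: each bracket must contract $\hat f(v_1),\dots,\hat f(v_{n-1})$ into the \emph{entire} tower $\sum_{k\geqslant 1}d^M_k$ applied to the last argument, not into the single arity-matched coherence $d^M_{n-1}$; as written, ``the corresponding coherence of the $\infty$-action'' reads as the latter, and dropping the cross-weight terms would break the relations.

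The genuine gap is your homotopy-invariance step. You plan to prove that the homotopy contributions to $D_f-D_{f'}$ cancel, so that homotopic weak \LP modules give \emph{literally equal} Leibniz$_\infty$ structures. This is false, and no sign bookkeeping will rescue it: the brackets depend on $f$ explicitly through the contraction operators $\iota_{f(m)[1]}$, and for homotopic $f\neq f'$ the difference of realizations is $F-F'=d_{\tot}^{\g}\circ h+h\circ d_{\tot}^{M}$, which is exact but not zero, so already $\lambda_2\neq\lambda_2'$ in general. Example~\ref{Ex: pairing continued} makes this concrete: cohomologous but distinct $2$-cocycles $\alpha,\alpha'$ produce brackets built from $\iota_{y[1]}\alpha$ versus $\iota_{y[1]}\alpha'$. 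What is true --- and what the paper actually proves in Theorem~\ref{Thm:wLP to HLA}, via Proposition~\ref{prop: homotopy invariance of CE functor} and \cite{CLX}*{Proposition 3.18} --- is that the homotopy $h$ assembles into an \emph{isomorphism} of Leibniz$_\infty[1]$ $C(\g)$-algebras $\phi_\bullet\colon (C(\g,M[1]),\lambda'_\bullet)\to (C(\g,M[1]),\lambda_\bullet)$ with $\phi_1=\id$; the phrase ``the same'' in the statement is informal shorthand for this. So your concluding cancellation argument should be replaced by the construction of this isomorphism out of $h$.

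A smaller gap of the same nature occurs on morphisms: the realization $\secondC(\g,\phi)$ of a morphism of weak \LP modules does \emph{not} strictly intertwine the brackets when $\phi$ has higher components, because a non-strict realization fails to commute with the contractions $\iota_{f(m)[1]}$ (contraction detects the positive-weight part of $\phi$) and with $d_{\CE}\otimes\id$. Accordingly, the paper's functor sends $\phi$ to a Leibniz$_\infty[1]$ morphism whose first component is $\secondC(\g,\phi)$ and whose higher components are defined inductively by contraction, following \cite{CLX}*{Proposition 3.9}; your functor needs these higher components as well.
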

The functor $\mathcal{G}^\infty_\g$  can be viewed as a homotopy lifting of Loday and Pirashvili's original method to realize Leibniz algebras from Lie algebra objects in the category $\mathcal{LM}$~\cite{LP}.
Let us provide a brief  explanation  --- Given a weak \LP module $f\colon M \rightsquigarrow \g$,  its tangent cohomology $\tanH(f_0) \colon \tanH^\bullet(M) \to \tanH^\bullet(\g)$ defines a \LP module, i.e., a graded Lie algebra morphism from the $\tanH^\bullet(\g)$-module $\tanH^\bullet(M)$ to $\tanH^\bullet(\g)$. In turn, we obtain a Leibniz algebra structure on $\tanH^\bullet(M)$, which is indeed compatible   with $\mathcal{G}^\infty_\g(f\colon M \rightsquigarrow \g)$, i.e., the Leibniz$_\infty$ algebra   $C(\g,M)$  (see Proposition \ref{main prop}).

Moreover, $\mathcal{G}^\infty_\g$ is also functorial with respect to variations of dg Lie algebras.
In fact, the collection of all weak Loday-Pirashvili modules forms a category, denoted by $\mathbf{wLP}$. Our construction in Theorem~\ref{Thm B} indeed gives rise to a functor $\mathcal{G}^\infty$ from $\mathbf{wLP}$ to the category $\mathbf{Leib}^\infty$ of Leibniz$_\infty$ algebras.
However, it is still mysterious whether the functor $\mathcal{G}^\infty$ admits a left adjoint satisfying the condition that the associated adjunction is a homotopy lifting of the adjunction between Leibniz algebras and Loday-Pirashvili modules  (see Proposition~\ref{prop: functorial wrt g} and Remark \ref{Rem:furtherthoughts}). We leave this problem to the future.

To describe structure maps of the Leibniz$_\infty$ $C(\g)$-algebra   $\secondC(\g,M)=\mathcal{G}^\infty_\g(f\colon M \rightsquigarrow \g)$  in a more vivid way, we need rooted forests with {\MO}s. The idea comes from  rooted trees (with {\MO}s) which are utilized to formulate   the generalized Campbell-Hausdorff series~\cite{Getzler}*{Proposition 5.9}. Our Proposition \ref{Prop: Leibniz infty via OPT} gives an interesting formula, the step by step process  giving all structure maps of $\secondC(\g,M)$  through a tree traversal algorithm.

Finally, we  point out that if $\g$ in this article is assumed to be  a locally finite  $\Linfty$  algebra, a natural generalization of  dg Lie algebras  \cite{LS},   then almost all of our results  (with some necessary adaptations) will in fact hold.
 Further, we would   like to  mention  a non-linear analogue of the close connection between Leibniz algebras and dg Lie algebras --- the correspondence between augmented racks and cubical monoids, see \cites{Clauwens, Mostovoy}. See also a recent work \cite{Mostovoy2}.

\textbf{Structure of the paper.}
In Section \ref{Sec:lfinftymodules}, we collect some basic definitions and results about dg Lie algebras, their modules, and the category of locally finite $\infty$-modules over a dg Lie algebra.  We dedicate  Section \ref{Sec:Proof} to giving the full details of our main Theorem \textbf{A}/\ref{Thm: Repinftyg is fibrant}.
In Section \ref{Sec:LPmodule},  we  introduce and exploit  weak {\LP modules},  and show a list of important facts and identities that are subsequently used in Sections \ref{Section: HLA}  to establish   Theorem \textbf{B}/\ref{Thm:wLP to HLA}.   The last Section \ref{Sec:rootedtree}  illustrates how the   Leibniz$_\infty$ algebras we constructed are realized via rooted forests with {\MO}s.

\textbf{Terminology and notations:}

\textit{Ground field, dimensions, and degrees.}
We use the symbol $\k$ to denote the field of real or complex numbers. All vector spaces are  \textbf{finite dimensional} over $\k$ unless otherwise stated. In particular, all dg Lie algebras that we consider are finite dimensional as well. By ``graded" we mean $\Z$-graded. The degree of a homogenous element $m$ in a graded vector space $M =\oplus_{i\in\Z} M^{i}$ is denoted by $\abs{m}$.

\textit{Shuffles.}
A $(p, q)$-shuffle is a permutation $\sigma$ of the set $\{1, 2, \cdots, p+q\}$ such that
$\sigma(1)<\cdots < \sigma(p)$ and $\sigma(p+1)<\cdots < \sigma(p+q)$.
The symbol $\mathrm{Sh}(p, \, q)$ denotes the set of $(p, \, q)$-shuffles.

\textit{Symmetric algebra.} Let $M$ be a graded vector space. We use $S(M):=\oplus_{p \geqslant 0}S^p(M)$ to denote the compactly supported symmetric algebra generated by $M$.
The symbol $\widehat{S}(M)$ denotes the $\mathfrak{m}$-adic completion of   $S(M)$,
where $\mathfrak{m}$ is the maximal ideal of $S(M)$ generated by $M$. In other words, we can treat $\widehat{S}(M)=\prod_{p \geqslant 0}S^p(M)$.

\textit{Koszul sign.}
The Koszul sign $\kappa(\sigma; \, m_1, \,  \cdots, \, m_n)$ of a permutation $\sigma$ of
homogeneous elements $m_1, \cdots, m_n$ in a graded vector space $M$ is determined by the relation
\begin{equation*}
m_1 \odot \cdots \odot  m_n=\kappa(\sigma; \, m_1, \, \cdots, \, m_n) m_{\sigma(1)} \odot \cdots \odot  m_{\sigma(n)}
\end{equation*}
in the graded commutative algebra $S(M)$. Unless specified otherwise, we use the abbreviated notation $\kappa(\sigma)$.

\textit{Dg algebra modules.}
Suppose that $\A=(\A,d_\A)$ is a commutative dg algebra. By saying an $\A$-module we mean a pair $(D,\partial^D)$, where $D$ is a (graded) module over the (graded) commutative algebra $\A$ and  $\partial^D:~D\to D[1]$ is a differential which satisfies
\[
\partial^D(am)=(d_\A a) m+(-1)^{a}a \partial^D(m),\quad\forall~ a\in \A,m\in D.
\]
What is referred to herein as an $\A$-module is called a dg $\A$-module in \cites{CLX,Qiaoold}.

\textit{Some commonly used notations.}
\begin{compactenum}
\item $\dgLie$ --- the category of dg Lie algebras; Objects are denoted by $\g, \mathfrak{h}$, etc..
\item $C(\g)$ --- the Chevalley-Eilenberg dg algebra of a dg Lie algebra $\g$;
\item ${\mathbf{Ch}}(\k)$ --- the category of cochain complexes of vector spaces;
\item $\Module_{\g}$ --- the category of $\g$-modules for a dg Lie algebra $\g$;
\item $\Mod_{C(\g)}$ --- the category of   $C(\g)$-modules;
\item   ${\LPM }$ --- the category of  \LP modules;
\item ${\LPM _{\g}}$ --- the subcategory of ${\LPM }$ consisting of  {\LP modules}  over $\g$;
\item $\dgLeib $ --- the category of dg Leibniz  $\k$-algebras;
\item ${\dgLeib}_{\A}$   --- the category of dg Leibniz algebras over  a dg algebra $\A$;
\item $\lfMod_\g^\infty$ --- the category  of  {\LF} $\infty$-$\g$-modules;	
\item  $\fgsMod_{C(\g)} $ --- the category of finitely generated and semifree   $C(\g)$-modules;
\item ${\weakLPmodg}$ --- the category  of weak {\LP modules} over $\g$;
\item $ \mathbf{wLP} $ --- the category of weak {\LP modules};
\item $\Ho(\lfMod_\g^\infty)$ --- the homotopy category of {\LF} $\infty$-$\g$-modules;
\item $\Ho(\weakLPmodg)$ --- the category of  homotopy classes of weak {\LP modules} over $\g$;
\item $\Leib^{\infty[1]}_{C(\g)}$ --- the category of  Leibniz$_\infty[1]$ algebras over the commutative dg algebra $C(\g)$;
\end{compactenum}

\textbf{Acknowledgment.}
We would like to thank Jiahao Cheng, Zhangju Liu, Feng Qu, and Guodong Zhou for fruitful discussions.

\section{Locally finite infinity-modules over dg Lie algebras}\label{Sec:lfinftymodules}
\subsection{Dg Lie algebras and Chevalley-Eilenberg functors}
 We start with some basic knowledge of dg Lie algebras.
\begin{Def}[\cite{DAGX}]\label{Def:dgLiealgebra}
	A \textbf{dg Lie algebra}  over $\k$ is a triple $\g=(\g,d,[-,-])$  where $(\g,d)$ forms a cochain complex of vector spaces,  and $[-,-]\colon \g  \otimes \g  \to \g $ is a degree $0$ bilinear map  called the graded Lie bracket, satisfying the following conditions:
	\begin{compactenum}
		\item For all $x \in \g^p, y \in \g^q$, we have $[x,y] = -(-1)^{pq}[y,x]$;
		\item For all $x \in \g^p, y \in \g^q, z \in \g^r$, we have
		\[
		(-1)^{pr}[x,[y,z]] + (-1)^{qp}[y,[z,x]] + (-1)^{rq}[z,[x,y]] = 0;
		\]
		\item The differential $d$ is a derivation with respect to the graded Lie bracket $[-,-]$, i.e., for all $x \in \g^p, y \in \g^q$, we have
		\[
		d[x,y] = [dx,y] + (-1)^p[x,dy].
		\]
	\end{compactenum}
	A morphism of dg Lie algebras from $\g = (\g,d,[-,-])$ to $\g^\prime = (\g^{\prime }, d^\prime, [-,-]^\prime)$ is a map of cochain complexes $f\colon (\g, d) \to (\g^{\prime },d^\prime)$ such that $f([x,y]) = [f(x),f(y)]^{\prime}$ for all $x,y \in \g$.
	The collection of all dg Lie algebras and their morphisms forms a category, denoted by $\dgLie$.
\end{Def}
Given a dg Lie algebra $\g$, the cohomology of the underlying cochain complex $(\g,d)$, called the \textbf{tangent cohomology} and denote by $\tanH^\bullet(\g)$, when equipped with the induced bracket $[-,-]$, is a graded Lie algebra. Each morphism  $f \colon \g \to \g^\prime$ of dg Lie algebras induces a morphism of graded Lie algebras   $\tanH(f) \colon (\tanH^\bullet(\g), [-,-]) \to (\tanH^\bullet(\g^\prime), [-,-]^\prime)$. Hence, $\tanH$ is indeed a functor from $\dgLie$ to $\gLie$, the category of  graded  Lie algebras.

The \textbf{Chevalley-Eilenberg dg algebra} $C(\g)$ of a dg Lie algebra $\g$ is defined as follows:
\begin{compactenum}
	\item The underlying space is the \textbf{compactly supported} symmetric algebra of $\g^{\vee}[-1]$, i.e.,
	\[
	C(\g) := S(\g^{\vee}[-1]) = \oplus_{p \geqslant 0} S^p(\g^{\vee}[-1]).
	\]
	Here ``compactly supported" means that we take \textit{direct sum}, instead of direct product. The number $p$ is referred to as the \textbf{weight} of elements in $C^p(\g):=S^p(\g^{\vee}[-1])$.
	\item The differential $d_{\CE}$ on $C(\g)$, called the \textbf{Chevalley-Eilenberg differential}, is the sum  $d_0 + d_1$, where
	\[
	d_0 \colon C^\bullet(\g) \to C^\bullet(\g)[1],
	\]
	is induced from the dual of the differential $d \colon \g \to \g[1]$ of the complex $(\g, d)$, and
	\[
	d_1 \colon C^\bullet(\g) \to C^{\bullet+1}(\g)[1]
	\]
	is induced from the graded Lie bracket $[-,-]$ by
	\[
	d_1(\xi[-1])(x[1], y[1]) = (-1)^{\abs{\xi}+1+\abs{x}}\xi([x,y]),\quad \forall~ \xi \in \g^\vee, x, y \in \g.
	\]
\end{compactenum}
Note that a morphism of dg Lie algebras $f \colon \g \to \g^\prime$ corresponds to a degree $0$ and weight-preserving morphism of dg algebras $f^\vee \colon (C(\g^\prime), d^\prime_{\CE}) \to (C(\g),d_{\CE})$.
Thus, we obtain a functor
\[
C \colon \dgLie \to {\dgalgcat}, \quad \g \mapsto C(\g),
\]
called the Chevalley-Eilenberg functor, from the category $\dgLie$ of dg Lie algebras  to the category ${\dgalgcat}$ of dg algebras.
\begin{Def}[\cite{DAGX}]\label{Def:dgmodule}
	Let $\g = (\g, d, [-,-])$ be a dg Lie algebra. A \textbf{representation} of $\g$, or a \textbf{$\g$-module}, is a finite dimensional cochain complex $M = (M,d^M)$ of vector spaces equipped with a bilinear map $\rho_{\g,M} \colon \g \otimes M \to M$ satisfying the following conditions:
	\begin{compactenum}
		\item $\rho_{\g,M}$ is a morphism of cochain complexes, i.e.,
		\begin{align*}
		d^M (\rho_{\g,M}(x; m)) &= \rho_{\g,M}(dx; m) + (-1)^{\abs{x}} \rho_{\g,M}(x;d^M m ),
		\end{align*}
		for all homogeneous $x ,y \in \g$ and $m\in M$;
		\item $\rho_{\g,M}$ is compatible with the graded Lie bracket $[-,-]$ on $\g$, i.e.,
		\begin{align*}
		\rho_{\g,M}([x,y]; m) &= \rho_{\g,M}(x;\rho_{\g,M}(y; m)) - (-1)^{\abs{x}\abs{y}}\rho_{\g,M}(y; \rho_{\g,M}(x; m)),
		\end{align*}
		for all homogeneous $x ,y \in \g$ and $m\in M$.
	\end{compactenum}
	Given two $\g$-modules $M$ and ${N}$, a morphism from $M$ to ${N}$ is a morphism $\phi\colon M=(M,d^M) \to N=(N, d^N)$ of the underlying cochain complexes such that the following diagram
	\[
	\begin{tikzcd}
	\g \otimes M \ar{d}[left]{\id \otimes \phi} \ar{r}{\rho_{\g,M}} & M \ar{d}{\phi} \\
	\g \otimes N \ar{r}{\rho_{\g,W}} & W
	\end{tikzcd}
	\]
	commutes in the category ${\mathbf{Ch}}(\k)$ of cochain complexes of vector spaces.
	
	The modules of a dg Lie algebra $\g$ and their morphisms form a category which we denote by $\Module_{\g}$.
\end{Def}
Given a dg Lie algebra $\g = (\g,d,[-,-])$, the underlying cochain complex $(\g,d)$ admits a $\g$-action defined by the graded Lie bracket $[-,-]$, called the \textbf{adjoint module} of $\g$.

There is a correspondence between dg Lie algebra modules and dg modules over their Chevalley-Eilenberg dg algebras. More precisely, given a dg Lie algebra $\g$, let $C(\g)$ be its Chevalley-Eilenberg dg algebra.
A $\g$-module structure on a cochain complex $M=(M,d^M)$ is equivalent to a  $C(\g)$-module structure on the space
\[
C(\g, M) :=  \bigoplus_{p \geqslant 0}C^p(\g, M) := \bigoplus_{p \geqslant 0} S^p(\g^{\vee}[-1]) \otimes M,
\]
whose differential is of the form
\[
d_{\tot}^M = d_{\CE} +  d_0^M + d_1^M:=  d_{\CE} \otimes {\id_{M}} + \id_{C(\g)} \otimes d^M + \id_{C(\g)}\otimes d_1^M \colon C(\g,M)\to C(\g,M)[1],
\]
where $d_0^M$ is simply extended from $d^M$ (in the sequel we abuse the notations $d^M$ and $d^M_0$) and the weight $1$ component $d_1^M \colon M \to C^1(\g, M)[1]$ corresponds to the $\g$-action map $\rho_{\g,M}$ on $M$ via the formula
\[
\iota_{x[1]} d_1^M(m) = \rho_{\g,M}(x; m),\quad x \in \g, m \in M.
\]
Here $\iota_{x[1]} \colon C^\bullet(\g)\to C^{\bullet-1}(\g)$ is the contraction map of degree $(\abs{x}-1)$. We   call $d_{\tot}^M$ the \textbf{total Chevalley-Eilenberg differential} (or simply total differential).

The  $C(\g)$-module  $C(\g,M)=(C(\g,M), d_{\tot}^M)$ is called the \textbf{total Chevalley-Eilenberg dg module} of the $\g$-module $M$.
Meanwhile, a morphism of $\g$-modules $\phi\colon M\to N$ corresponds to a weight-preserving morphism of $ C(\g)$-modules $C(\g,\phi)\colon C(\g,M)\to  C(\g,N)$, i.e., $C(\g,\phi)$ maps $C^p (\g, M)$ to $ C^{p} (\g, N)$ for all $p\geqslant 0$.
Thus, we obtain a functor
\[
C(\g, -)\colon \Module_{\g} \to \Mod_{C(\g) },\quad M\mapsto C(\g,M),
\]
also called the Chevalley-Eilenberg functor.
Here $\Mod_{C(\g)}$ denotes the category of  $C(\g)$-modules,  whose morphisms are $C(\g)$-linear maps compatible with relevant total differentials.

Viewing $\g$ as the adjoint $\g$-module, we have the total  differential on $C(\g,\g)=C(\g)\otimes\g$ defined by
\begin{equation}\label{Eqt:dtotg}
d_{\tot}^{\g}:=d_{\CE}\otimes {\id_{\g}} +   {\id_{C(\g)}}\otimes d+  {\id_{C(\g)}}\otimes \dadjoint, 	
\end{equation}
where $\dadjoint\colon \g\to \g^{\vee}[-1]\otimes \g [1]$ comes from the adjoint action of $\g$ on $\g$.

The Chevalley-Eilenberg functor $C(\g,-)$ is faithful but \emph{not full}. In fact, given two objects $M$ and $N$ in $\Module_{\g}$, the functor $C(\g,-)$ defines a map
\[
C(\g,-) \colon \Module_{\g}(M,N) \hookrightarrow \Mod_{C(\g)}(C(\g,M), C(\g,N)).
\]
The set of morphisms $\Mod_{C(\g)}(C(\g,M), C(\g,N))$ consists of degree $0$ and $C(\g)$-linear maps $f$ satisfying $d_{\tot}^N \circ f = f \circ d_{\tot}^M$.
The map $f$ is of the form $f = \sum_{p= 0}^{\infty} f_p$,
where  $f_p \colon  C^\bullet(\g,M)  \to C^{\bullet+p}(\g,N)$ is called the \textbf{weight} $p$ component of $f$, whereas the image of $C(\g,-)$ consists only of weight $0$ morphisms $f_0$ by definition. Thus the functor $C(\g,-)$ is not surjective in general.

\subsection{Locally finite infinity-modules}\label{Sec:LFinftygmodule}
Let $\g$ be a dg Lie algebra and $C(\g)$ its Chevalley-Eilenberg dg algebra.
We now introduce the notion of locally finite $\infty$-$\g$-modules, which can be thought of compactly supported $\Linfty$ modules over $\g$.

\begin{Def}\label{Def:lfinftygmodule}
	A \textbf{{locally finite} $\infty$-$\g$-module} is a cochain complex $M=(M, d^M)$ together with a square zero linear operator
	\begin{equation}\label{Eqt:dCEVoriginalinfinity}
	d_{\tot}^M :=d_{\CE}\otimes {\id_{M}}+\sum_{k=0}^\infty \id_{C(\g)}\otimes d_k^M  \colon C(\g,M)\to C(\g,M)[1],
	\end{equation}	
	called the \textbf{total Chevalley-Eilenberg differential} (or simply total differential), satisfying the \textbf{local finiteness constraint}: For all $m \in M$, there exists a sufficiently large integer $N_m$ depending on $m$ such that $d^M_k(m) = 0$ for all $k \geqslant N_m$.
	Here $d_k^M \colon M \to C^{k}(\g,M)$, for all $k\geqslant 0$, is called the weight $k$ component structure map and the initial one $d_0^M$   $:=d^M$.
\end{Def}
In the sequel, a locally finite $\infty$-$\g$-module would be simply denoted by    $(M, d_{\tot}^M)$, or just $M$ whose structure maps are $d_k^M$  by default.
Given a locally finite $\infty$-$\g$-module $(M, d_{\tot}^M)$,  we call $C(\g,M)=(C(\g,M),d_{\tot}^M)$ the \textbf{total Chevalley-Eilenberg  $C(\g)$-module} of $M$. The corresponding cohomology, denoted by $H_{\tot}(\g,M)$, is called the \textbf{total Chevalley-Eilenberg cohomology} of $M$.
Meanwhile, it is easy to see that the map $d_1^M \colon M \to C^1(\g,M)$ induces a module structure of the tangent Lie algebra $\tanH^\bullet(\g)$ on the  cohomology $\tanH^\bullet(M)$ of the cochain complex $(M, d^M_0)$.
Still, we call $\tanH^\bullet(M)$ the \textbf{tangent cohomology} of  the {\LF} $\infty$-$\g$-module $M$.

\begin{Def}\label{Def-Prop:inftymorphism}
	An  \textbf{$\infty$-morphism}  of {\LF} $\infty$-$\g$-modules $f\colon  M \rightsquigarrow  {N}$
	is a family of degree $0$ linear maps $f_k \colon M\to C^{ k} (\g,N) $, called the weight $k$ component of $f$, such that $f=\sum_{k=0}^\infty f_k \colon M\to C(\g,{N})$ is locally finite,  that is, for each homogeneous element $m \in M$, $f_k(m)$ vanishes if $k$ is sufficiently large, and $f$ fits into the following commutative diagram
	\[
	\begin{tikzcd}
	M \ar{d}[left]{d_{\tot}^{M }} \ar{rr}{f} & & C(\g,{N}) \ar{d}{d_{\tot}^{N}} \\
	C(\g,M )  \ar{rr}{F := \id_{C(\g)}\otimes f} && C(\g,{N}).
	\end{tikzcd}
	\]
	Denote by $\lfMod_\g^{\infty}$ the category of {\LF} $\infty$-$\g$-modules and their  $\infty$-morphisms.
\end{Def}

Clearly, the component $f_0\colon M\to N$ of $f$ induces a linear map $\tanH(f_0)\colon \tanH(M)\to \tanH(N)$ which is indeed a morphism of $\tanH(\g)$-modules. Hence, $\tanH$ defines a functor from $\lfMod_\g^{\infty}$ to $\Mod_{\tanH(\g)}$, the category of modules over the Lie algebra $\tanH(\g)$.
\begin{Rem}
	If we regard $\g$ as an $\Linfty$ algebra, the category $\lfMod_\g^{\infty}$ indeed consists of locally finite  (or compactly supported) $\Linfty$ modules over $\g$.
	The local finiteness constraint is served for the convergence of the total   differential $d_{\tot}^M$ or of the morphism $f = \sum_{p\geqslant 0} f_p \colon M \to C(\g, N)$, when acting on any element in $M$.
	It is also closely related to the \textit{mildness condition}
	introduced by Buijs and Murillo in~\cite{BM}.
\end{Rem}	
Given any locally finite $\infty$-$\g$-module $(M, d_{\tot}^M)$, the pair $(C(\g,M), d_{\tot}^M)$ forms a  $C(\g)$-module. Meanwhile, any $\infty$-morphism $f \colon M \rightsquigarrow N$ of locally finite $\infty$-$\g$-modules gives rise to a morphism
\[
F = \id_{C(\g)} \otimes \sum_{k \geqslant 0} f_k \colon \big(C(\g,M), d_{\tot}^M\big) \to \big(C(\g, N), d_{\tot}^{N}\big)
\]
of  $C(\g)$-modules.
The two assignments define a functor
\begin{equation}\label{Eqt:SecondC}
\secondC(\g,-) \colon \quad \lfMod_\g^{\infty} \to \Mod_{C(\g)},
\end{equation}
which is called the Chevalley-Eilenberg functor of locally finite $\infty$-$\g$-modules.

\begin{prop}\label{prop: CE functor from infty modules}
	The  Chevalley-Eilenberg functor $\secondC(\g,-)$ of locally finite $\infty$-$\g$-modules in \eqref{Eqt:SecondC} is fully faithful. Moreover, it defines an equivalence of categories
	\[
	\secondC(\g,-) \colon \quad~\lfMod_\g^{\infty} \xrightarrow{\quad\simeq\quad} \fgsMod_{C(\g)},
	\]
	where $\fgsMod_{C(\g)} \subset \Mod_{C(\g)}$ is the subcategory consisting of finitely generated and semifree dg $C(\g)$-modules.
	Here,  a   $C(\g)$-module $\mathcal{M}$ is called \textit{semifree} if the underlying space $\mathcal{M}$ obtained by forgetting the differential is a free $C(\g)$-module.
\end{prop}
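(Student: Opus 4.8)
The plan is to exhibit $\secondC(\g,-)$ as fully faithful and then to identify its essential image with $\fgsMod_{C(\g)}$, the two halves of the asserted equivalence. The single structural fact driving everything is that, after forgetting differentials, $\secondC(\g,M)=C(\g)\otimes M$ is a \emph{free} $C(\g)$-module on the finite-dimensional generating space $M\cong 1\otimes M$; thus any $C(\g)$-linear map out of it is determined by, and may be freely prescribed on, the generators $1\otimes M$. In particular $\secondC(\g,M)$ is finitely generated and semifree, so the functor does land in $\fgsMod_{C(\g)}$.

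First I would establish full faithfulness. Restricting a $C(\g)$-module morphism $F\colon \secondC(\g,M)\to\secondC(\g,N)$ to the generators yields a degree $0$ linear map $f\colon M\to C(\g,N)$, and conversely $F=\id_{C(\g)}\otimes f$ is the unique $C(\g)$-linear extension; this sets up a bijection between $C(\g)$-linear maps and linear maps $M\to C(\g,N)$. Two observations then complete the argument. Since the target $C(\g,N)=\bigoplus_k C^k(\g,N)$ is a direct sum, $f(m)$ is automatically a finite sum, so writing $f=\sum_k f_k$ by weight recovers the local finiteness constraint for free. Moreover, because both $d_{\tot}^M$ and $d_{\tot}^N$ are derivations over $d_{\CE}$ while $F$ is $C(\g)$-linear, the chain-map identity $d_{\tot}^N\circ F=F\circ d_{\tot}^M$ holds on all of $\secondC(\g,M)$ if and only if it holds on the generators $1\otimes M$ --- which is precisely the commuting square defining an $\infty$-morphism in Definition~\ref{Def-Prop:inftymorphism}. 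Hence $\Mod_{C(\g)}(\secondC(\g,M),\secondC(\g,N))$ is in natural bijection with the set of $\infty$-morphisms $M\rightsquigarrow N$, i.e.\ $\secondC(\g,-)$ is fully faithful.

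It remains to prove essential surjectivity onto $\fgsMod_{C(\g)}$. Given a finitely generated semifree module $\mathcal{M}$, I would choose a finite homogeneous basis to fix a graded-module isomorphism $\mathcal{M}\cong C(\g)\otimes V$ with $V$ finite-dimensional, and then transport the differential $\partial^{\mathcal{M}}$. Since $\partial^{\mathcal{M}}$ is a derivation over $d_{\CE}$, it is determined by its values on $1\otimes V$; decomposing by weight gives maps $d_k\colon V\to C^k(\g,V)$ (with $d_0$ the internal differential on $V=C^0(\g,V)$), locally finite because $\mathcal{M}$ is a direct sum over weights. The $C(\g)$-linear derivation extending $d_{\CE}+\sum_k d_k$ is exactly the operator $d_{\tot}^V$ of~\eqref{Eqt:dCEVoriginalinfinity}, so the identity $(\partial^{\mathcal{M}})^2=0$ is equivalent to $(d_{\tot}^V)^2=0$; that is, $(V,\{d_k\})$ is a locally finite $\infty$-$\g$-module and $\mathcal{M}\cong\secondC(\g,V)$. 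Being fully faithful and essentially surjective onto $\fgsMod_{C(\g)}$, the functor $\secondC(\g,-)$ is an equivalence onto that subcategory.

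The routine but care-demanding part --- and the place where I expect the only real friction --- is the weight-and-sign bookkeeping in the last paragraph: verifying that the $C(\g)$-linear derivation generated by the $d_k$ reproduces $d_{\tot}^V$ with the correct Koszul signs, and that squaring it term by term recovers exactly the quadratic relations among the $d_k$ packaged in Definition~\ref{Def:lfinftygmodule}. These are the same identities already implicit in the passage from $\g$-modules to $C(\g)$-modules recalled before that definition, so no new phenomenon arises and the work is purely organizational.
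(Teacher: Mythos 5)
Your proposal is correct and follows essentially the same route as the paper's own (much terser) proof: full faithfulness via the weight decomposition of $C(\g)$-linear morphisms and the observation that local finiteness is automatic because $C(\g,N)$ is a direct sum, and essential surjectivity by reading off the weight components $d_k$ of the differential of a finitely generated semifree module restricted to its generators. The extra details you supply (the bijection via restriction to generators, and the reduction of the chain-map identity to generators using the derivation property) are exactly the verifications the paper leaves implicit.
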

\begin{proof}
	It is clear that $\secondC(\g, -)$ is faithful. To see that it is also full, note that each morphism $F \in \Mod_{C(\g)}(\secondC(\g,M), \secondC(\g,{N}))$ can be decomposed by weight
	\[
	F = \sum_{k \geqslant 0} F_k \colon C^\bullet(\g,M) \to C^{\bullet+k}(\g,{N}).
	\]
	Since $F(m) = \sum_k F_k(m)$ is a finite sum for all $m \in M$, it thus defines an $\infty$-morphism $f$  from $M$ to ${N}$ with $F = \secondC(\g, f)$.
	
	Furthermore,  each object $\mathcal{M}$ in $\fgsMod_{C(\g)}$ has the form $(C(\g,M), d_{\tot}^M)$, where $C(\g,M)$ is the graded $C(\g)$-module freely generated by a finite dimensional graded vector space $M$, and $d_{\tot}^M$ is a differential on $C(\g,M)$ that has a decomposition by weight: $d_{\tot}^M = d_{\CE}+ \sum_{k \geqslant 0}d^M_k$, where $d^M_k \colon C^\bullet(\g,M) \to C^{\bullet+k}(\g,M)[1]$ is $C(\g)$-linear. These data give a {\LF} $\infty$-$\g$-module structure $d_{\tot}^M$ on $M$.
	Thus, $\secondC(\g,-) \colon \lfMod_\g^{\infty} \to \fgsMod_{C(\g)}$ is essentially surjective as well.
\end{proof}
Since an ordinary $\g$-module is also a {\LF} $\infty$-$\g$-module, thus, to each dg Lie algebra $\g$ there are associated three categories of modules
\[
\Module_\g \subset \mRepginfty \subset \lfMod_\g^\infty,
\]
where ${\mRepginfty}$ is the category comprised of $\g$-modules and  {$\infty$-morphisms} of $\g$-modules.
Applying the Chevalley-Eilenberg functor to these three categories, we obtain the following commutative diagram:
\[
\begin{tikzcd}
{\mRepginfty} \ar[r, hookrightarrow] \ar{dr}{{\firstC }(\g,-)}  &  \lfMod_\g^\infty \ar{d}[right]{\secondC(\g,-)}[left]{\simeq} \\
\Module_{\g}  \ar{u} \ar{r}{C(\g,-)} &    \fgsMod_{C(\g)} \ar[r, hookrightarrow] & \Mod_{C(\g)}.
\end{tikzcd}
\]
In the subsequent Section \ref{Section: homotopy theory}, we show that the category $\lfMod_\g^\infty$ carries almost a model category structure such that $\secondC(\g,-)$ preserves weak equivalences and fibrations  (whereas the category ${\mRepginfty}$ does not, see Remark \ref{Rem:pullbacksnotexist}).

\section{Locally finite infinity-modules as almost a model category}\label{Sec:Proof}
In~\cite{Vallette}, Vallette has shown that, for a dg operad $\mathcal{P}$, the category of  homotopy $\mathcal{P}$ algebras and their $\infty$-morphisms admits almost a model category structure (compared to a model category structure, only the axiom on limits and colimits is
not completely fulfilled).
Analogous to the situation of homotopy $\mathcal{P}$ algebras considered in~\cite{Vallette}, the existence of equalizers and coequalizers in category $\lfMod_\g^\infty$ is very subtle and requires further conditions. 
Therefore, the present category $\lfMod_\g^\infty$ does not admit a (closed) model category structure.
Following the same approach, we shall show that the category $\lfMod_\g^{\infty}$ of {\LF} $\infty$-$\g$-modules (introduced in Section \ref{Sec:LFinftygmodule}) also carries almost a model category structure (Theorem \ref{Thm: Repinftyg is fibrant}).
\subsection{The main theorem}
We first recall from \cites{Brown, Rogers, Vallette} some basic definitions in model category.
\begin{Def}
	Let $\mathfrak{C}$ be a category. A morphism $f$ is said to be a \textbf{retract} of $g$ if there exists a commutative diagram
	\[
	\begin{tikzcd}
	A \ar{d}{f} \ar{r} & C \ar{d}{g} \ar{r} & A \ar{d}{f} \\
	B \ar{r}            & D \ar{r}            & B
	\end{tikzcd}
	\]
	in $\mathfrak{C}$.
\end{Def}
\begin{Def}
	Let $i\colon A \rightarrow B$ and $p\colon X \rightarrow Y$ be morphisms in a category $\mathfrak{C}$. We say that $i$ has the \textbf{left lifting property} with respect to $p$  (or $p$ has the \textbf{right lifting property} with respect to $i$)  if every commutative square
	\[
	\begin{tikzcd}
	A \ar{d}{i} \ar{r}{f} & X \ar{d}{p} \\
	B \ar{r}{g}   & Y
	\end{tikzcd}
	\]
	can be completed to a commutative diagram
	\[
	\begin{tikzcd}
	A \ar{d}{i} \ar{r}{f} & X \ar{d}{p} \\
	B \ar{r}{g} \arrow[ur, dashed, "h"] & Y.
	\end{tikzcd}
	\]
	The morphism $h\colon B \rightarrow X$ is called a \textbf{lift}.
\end{Def}

\begin{Def}\label{Def: CFO}
	Let $\mathfrak{C}$ be a category with finite products, with terminal object $\ast \in \mathfrak{C}$, and equipped with two classes of morphisms which are called \textbf{weak equivalences}  and \textbf{fibrations}, respectively. A morphism is called a \textbf{trivial fibration} (or an acyclic fibration) if it is  both a weak equivalence and a fibration. We call $\mathfrak{C}$  a \textbf{category of fibrant objects} if the following axioms are satisfied:
	\begin{compactenum}
		\item Every isomorphism in $\mathfrak{C}$ is a trivial fibration.
		\item The class of weak equivalences satisfies the ``2 out of 3" criteria, i.e.,  if $f$ and $g$ are two composable morphisms in $\mathfrak{C}$, and any two of $f, g, g\circ f$ are weak equivalences, then so is the third one.
		\item The composition of   fibrations is again a fibration.
		\item All objects are fibrant, i.e., for any object $X \in \mathfrak{C}$, the unique morphism $X \to \ast$ is a fibration.
		\item The pullback of a  fibration exists, which is again a   fibration, i.e.,  given a diagram $Y \xrightarrow{g} Z  \xleftarrow{f} X$ in $\mathfrak{C}$ such that $f$ is a   fibration, then the pullback $X \times_Z Y$ exists, and the induced projection $X \times_Z Y \xrightarrow{p} Y$ is a   fibration. Moreover, if $f$ is a trivial fibration, then so is $p$.
		\item For any object $X \in \mathfrak{C}$, there is an associated  path object, i.e.,  an object $X^I\in \mathfrak{C}$ equipped with morphisms
		\[
		X \xrightarrow{s} X^I \xrightarrow{(\epsilon_0,\epsilon_1)} X \times X,
		\]
		such that $s$ is a weak equivalence, $(\epsilon_0, \epsilon_1)$ is a fibration, and their composition is the diagonal map $X\to X\times X$.
	\end{compactenum}
\end{Def}
\begin{Def}\label{Def: Almost model category}
	Let $\mathfrak{C}$ be a category of fibrant objects. Assume further that $\mathfrak{C}$ has an initial object $0$, and is equipped with another class of morphisms, called \textbf{cofibrations}. A morphism which is both a weak equivalence and a cofibration is called a \textbf{trivial cofibration} (or an acyclic cofibration). We call $\mathfrak{C}$ \textbf{almost a model category} if the following additional axioms are satisfied:
	\begin{compactenum}[$(a)$]
        \item This category $\mathfrak{C}$ admits finite products and coproducts\footnote{Note that only the existence of finite products is assumed in~\cite{Vallette}*{Theorem 4.2}. }.
		\item A retract of a weak equivalence (resp. fibration) is again a weak equivalence (resp. fibration).
		\item Any morphism $f$ in $\mathfrak{C}$ can be factored as $f = sj$, where $j$ is a cofibration and $s$ is a weak equivalence.
		\item Any morphism $f$ in $\mathfrak{C}$ can be factored as $f = pi$, where $p$ is a fibration, $i$ is a weak equivalence, and $i$ has the left lifting property with respect to all fibrations.
	\end{compactenum}
\end{Def}
We now focus on the category $\lfMod_\g^\infty$. Note that it is pointed, i.e., its final and initial objects are one and the same, namely the zero object in $\lfMod_\g^\infty$.
\begin{Def}\label{Def: Model structure for Repginfty}
	An  $\infty$-morphism  $(f\colon   M  \rightsquigarrow  {N})$  $  \in \lfMod_\g^{\infty}(M,N)$ is called
	\begin{itemize}
		\item [(N)] a \textbf{weak equivalence}, if it is a quasi-isomorphism of local finite   $\infty$-$\g$-modules, i.e., its leading component $f_0 \colon (M,d^M_0) \to (N,d^N_0)$ is a quasi-isomorphism of cochain complexes;
		\item [(F)] a \textbf{fibration}, if   $f_0  $ is an epimorphism, i.e., surjective;
		\item [(TF)] a \textbf{trivial fibration} if it is both a weak equivalence and a fibration;
		\item [(C)] a \textbf{cofibration}, if   $f_0$ is a monomorphism, i.e., injective;
		\item [(TC)]  a \textbf{trivial cofibration} if it is both a weak equivalence and a cofibration.
	\end{itemize}
\end{Def}

\begin{Thm}~\label{Thm: Repinftyg is fibrant}
The category $\lfMod_\g^\infty$ of \LF $\infty$-$\g$-modules admit finite products and finite coproducts.
\begin{compactenum}
		\item When equipped with the classes of weak equivalences and fibrations as defined above, the category $\lfMod_\g^{\infty}$ of {\LF} $\infty$-$\g$-modules  is a category of fibrant objects.
		\item Equipped with the class of cofibrations as defined above, $\lfMod_\g^{\infty}$ is almost a model category.
		\item Every object $M$ in $\lfMod_\g^{\infty}$ is fibrant and cofibrant in the sense that the morphism $M \to 0$ is a fibration and the morphism $0 \to M$ is a cofibration.
	\end{compactenum}
\end{Thm}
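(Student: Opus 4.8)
The plan is to reduce each axiom to a statement about the leading (weight-zero) component $f_0$ of an $\infty$-morphism, leaning on two structural observations. First, $\lfMod_\g^{\infty}$ is $\k$-linear: the intertwining condition of Definition~\ref{Def-Prop:inftymorphism} is linear in $f$, so each $\lfMod_\g^{\infty}(M,N)$ is a vector space and composition is bilinear; hence the biproduct $M\oplus N$, with structure maps $d_k^{M\oplus N}=d_k^M\oplus d_k^N$, serves simultaneously as the finite product and the finite coproduct. This proves the first assertion and axiom~$(a)$. The zero object is terminal and initial, and the leading components of $M\to 0$ and $0\to M$ are the surjection $M\to 0$ and the injection $0\to M$, so every object is both fibrant and cofibrant, which is item~(3). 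Second, composition obeys $(g\circ f)_0=g_0\circ f_0$, and by Proposition~\ref{prop: CE functor from infty modules} the functor $\secondC(\g,-)$ identifies $\lfMod_\g^{\infty}$ with $\fgsMod_{C(\g)}$; I will build the delicate limits inside the ambient category $\Mod_{C(\g)}$ and then check that the result is again finitely generated and semifree.

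Granting these, the axioms for a category of fibrant objects (Definition~\ref{Def: CFO}) are almost formal. An isomorphism has invertible $f_0$, so it is a trivial fibration (axiom~1); the identity $(g\circ f)_0=g_0\circ f_0$ reduces the $2$-out-of-$3$ property for weak equivalences and the closure of fibrations under composition to the corresponding facts for quasi-isomorphisms and surjections of cochain complexes (axioms~2 and~3); and $M\to 0$ always has surjective leading component (axiom~4). For path objects (axiom~6) I fix the finite interval complex $\mathbf{I}$ with $\mathbf{I}^{-1}=\k\langle h\rangle$, $\mathbf{I}^{0}=\k\langle a,b\rangle$ and $dh=a-b$, and set $M^{I}:=M\otimes\mathbf{I}$ with $d_k^{M^{I}}=d_k^M\otimes\id_{\mathbf{I}}$. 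Taking $s=\id_M\otimes(1\mapsto a+b)$ and $(\epsilon_0,\epsilon_1)=\id_M\otimes(\mathrm{ev}_0,\mathrm{ev}_1)$, the leading component of $s$ is a K\"unneth quasi-isomorphism, that of $(\epsilon_0,\epsilon_1)$ is degreewise surjective, and their composite is the diagonal $M\to M\oplus M$, exactly as required.

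The main obstacle is the pullback axiom~(5). Given a fibration $f\colon X\rightsquigarrow Z$ and any $g\colon Y\rightsquigarrow Z$, a weight-filtration argument shows that $F=\secondC(\g,f)$ is surjective because its associated graded is $\id_{C(\g)}\otimes f_0$ with $f_0$ surjective. The pullback $P=\secondC(\g,X)\times_{\secondC(\g,Z)}\secondC(\g,Y)=\ker(F-G)$ then exists in $\Mod_{C(\g)}$, and the crux is that it lies in $\fgsMod_{C(\g)}$: since $F$ surjects onto the free graded module $\secondC(\g,Z)$ it splits $C(\g)$-linearly, so $\ker F$ is a direct summand of a free finitely generated module, hence itself finitely generated and free by graded Nakayama over the connected graded algebra $C(\g)$; applying $-\otimes_{C(\g)}\k$ identifies its generators with $K:=\ker f_0$. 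The splitting gives $P\cong\ker F\oplus\secondC(\g,Y)\cong\secondC(\g,K\oplus Y)$ as graded modules, so transporting the restricted differential back through the equivalence yields the pullback object $W=X\times_Z Y$ with generators $K\oplus Y$, whose projection $p\colon W\rightsquigarrow Y$ has $p_0$ the surjection $K\oplus Y\to Y$; thus $p$ is a fibration. For the trivial case, the short exact sequence $0\to\secondC(\g,K)\to\secondC(\g,W)\xrightarrow{\secondC(\g,p)}\secondC(\g,Y)\to 0$ restricts on generators to a short exact sequence of cochain complexes $0\to(K,d^X|_K)\to(W,d_0^W)\xrightarrow{p_0}(Y,d_0^Y)\to 0$; when $f_0$ is also a quasi-isomorphism its kernel $(K,d^X|_K)$ is acyclic, and the long exact sequence forces $p_0$ to be a quasi-isomorphism, so $p$ is a trivial fibration. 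Pinning down the generating differential $d_0^W$ and the fiber identification is the technical heart of the proof.

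Finally I would verify Vallette's extra axioms~$(b)$--$(d)$ (Definition~\ref{Def: Almost model category}). Axiom~$(b)$ is immediate, since a retract diagram restricts on leading components to a retract diagram, and retracts of quasi-isomorphisms (resp.\ surjections) are again such. For axiom~$(c)$ I use the mapping cylinder of $f$, with finite-dimensional generators $N\oplus M\oplus M[-1]$ and differential twisted by $f=\sum_k f_k$; it lies in $\lfMod_\g^{\infty}$ and factors $f$ as $M\xrightarrow{j}\mathrm{Cyl}(f)\xrightarrow{s}N$ with $j_0$ a split injection (a cofibration) and $s_0$ a deformation-retract quasi-isomorphism (a weak equivalence). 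Axiom~$(d)$ is Brown's factorization lemma: using the path object $N^{I}$ and the pullback of axiom~(5), $f$ factors as $M\xrightarrow{i}M\times_N N^{I}\xrightarrow{p}N$, where $p$ is a fibration and $i$ is a section of the trivial fibration $M\times_N N^{I}\to M$, hence a weak equivalence and a cofibration; its left lifting property against all fibrations is inherited through $\secondC(\g,-)$ from the projective model structure on $\Mod_{C(\g)}$, in which semifree modules are cofibrant and the degreewise surjections are the fibrations. Assembling these verifications proves items~(1) and~(2) and completes the argument.
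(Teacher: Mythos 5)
Your handling of the routine axioms agrees in substance with the paper: the biproduct argument for finite (co)products, the reduction of axioms (1)--(4) and (b) to facts about $f_0$, and the path object are all essentially the paper's steps (your interval $\mathbf{I}$ is the chain model of the interval rather than the paper's Dupont contraction $J=\k\oplus\k t\oplus\k\,dt$, which is harmless provided you give $M\otimes\mathbf{I}$ the full tensor-product differential in weight zero, not just $d_0^M\otimes\id_{\mathbf I}$). The real divergence is in the pullback axiom (5) and the lifting axiom (d), which is exactly where the paper invests all of its technical effort: it first replaces a fibration by an isomorphic \emph{strict} one (Lemma~\ref{Lem: Strictify fibration}), builds the pullback by hand on the generators $M^\prime\oplus\ker\phi_0$ with an explicitly twisted differential (Proposition~\ref{prop: pullback of strict fibrations}, Corollary~\ref{Cor: pullback of fibration in Repginfty}), and proves the left lifting property by a weight-by-weight induction (Lemma~\ref{lem: LLP}), never asserting anything about surjectivity of $\secondC(\g,f)$ in $\Mod_{C(\g)}$.

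Your route breaks at precisely that assertion: it is false that a fibration $f$ (i.e.\ $f_0$ surjective) has surjective image $F=\secondC(\g,f)$. Take $\g$ the one-dimensional abelian dg Lie algebra concentrated in degree $1$, so $C(\g)=\k[x]$ with $\abs{x}=0$ and $d_{\CE}=0$; let $M=N=\k$ in degree $0$ with all structure maps zero, and let $f$ have components $f_0=\id_\k$ and $f_1=x\cdot(-)$. Then $f$ is even a trivial fibration, but $F$ is multiplication by $1+x$ on $\k[x]$, which is not surjective: solving $F(u)=1$ weight by weight yields the series $\sum_k(-1)^kx^k$, which is not an element of the compactly supported module $C(\g,M)=\oplus_p C^p(\g)\otimes M$. ``Surjective on the associated graded implies surjective'' requires the source to be complete with respect to the weight filtration, and the local finiteness constraint is exactly the negation of that completeness. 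With this, the rest of your paragraph collapses: $F$ need not split, $\ker F$ need not be a summand of a free module (and the graded Nakayama step is independently unjustified, since $\ker F$ is only weight-\emph{filtered}, not weight-graded, while $C(\g)$ is not connected in cohomological degree), and $P\cong\secondC(\g,K\oplus Y)$ does not follow. The same counterexample defeats your axiom-(d) argument that $\secondC(\g,-)$ carries fibrations of $\lfMod_\g^{\infty}$ to fibrations (degreewise surjections) of the projective model structure on $\Mod_{C(\g)}$, so the lifting property cannot be ``inherited'' that way; the paper instead proves Lemma~\ref{lem: LLP} directly. Be warned, finally, that this local-finiteness phenomenon is the genuine crux of the whole theorem rather than a technicality you can route around: even a strictification-type argument must invert a map of the form $\id+(\text{weight-raising})$, whose inverse is in general an infinite (hence non-locally-finite) sum --- in the example above it would again be $\sum_k(-x)^k$ --- so any complete proof has to explain why the recursively produced corrections terminate, and your proposal does not address this point.
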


The proof is deferred to the subsequent subsections.
As an immediate application, we can explicitly describe the homotopy category $\Ho(\lfMod_\g^\infty)$  which is obtained from $\lfMod_\g^\infty$ by inverting  weak equivalences (see~\cites{Hovey, Quillenbook}).
To this end, we construct a path object explicitly for each object $M \in \lfMod_\g^{\infty}$.
First of all,  recall the model of commutative dg algebra   for the interval $I=[0,1]$,   also known as the \textit{Sullivan algebra}  in rational homotopy theory.

\begin{Def}[\cite{FHT}]
	Let $\Lambda(t,dt) = \k[t] \oplus \k[t]dt$ be the  commutative graded algebra freely generated by the basis $\{t,dt\}$ with degrees $\abs{t} = 0$ and $\abs{dt} = 1$, respectively, and let   $d$ be the   differential on $\Lambda(t,dt)$ extended from $t \mapsto dt$. The resulting commutative dg algebra $(\Lambda(t,dt), d)$ coincides with the space of polynomial differential forms on the $1$-simplex $I$, and is called the \textbf{Sullivan algebra}.
	Define two augmentations
	\[
	\epsilon_0, \epsilon_1 \colon \Lambda(t,dt) \to \k,\quad \text{by}\quad \epsilon_0(f(t)+g(t)dt) = f(0), ~\epsilon_1(f(t)+g(t)dt) = f(1). 
	\]
\end{Def}
The underlying cochain complex of the Sullivan algebra deformation retracts to its subcomplex $(J := \k \oplus \k t \oplus \k dt, d)$, known as the Dupont's contraction~\cites{Dupont, Getzler, Vallette}.

Given any object $M=(M,d_{\tot}^M) \in \lfMod_\g^{\infty}$, we are able to obtain a new object
\[
J(M) = \left(J(M):=J\otimes M, d_{\tot}^{J(M)} = \sum_{k\geqslant 0} d_k^{J(M)}\right)
\]
in $\lfMod_\g^{\infty}$  as described below:
\begin{itemize}
	\item For each $n \in \Z$,   $J(M)^n$ is the direct sum $ M^n \oplus M^n t \oplus M^{n-1}dt$, whose elements will be written as $(m_0,m_1,u)$ or $m_0+t m_1 + (dt) u$ for   $m_0,m_1 \in M^n, u \in M^{n-1}$.
	\item The differential $d_0^{J(M)}$ on $J(M) = \oplus_n J(M)^n$ is defined by
	\[
	d^{J(M)}_0(m_0+t m_1+ (dt) u) := d_0^M(m_0)+t d_0^M(m_1)  + dt(m_1 - d_0^M(u)).
	\]
	\item The {\LF} $\infty$-module structure $d^{J(M)}_k $ ($ k \geqslant 1$) is defined by
	\begin{align*}
	d_k^{J(M)}(m_0 + tm_1 + (dt) u) &:= d_k^M(m_0) + td_k^M(m_1) + (dt)d_k^M(u),
	\end{align*}
	for all $m_0+ tm_1 + (dt)u \in J(M)$.
\end{itemize}

We can verify that $J(M)$ is indeed a path object for $M$ due to the following fact.
\begin{prop}\label{prop: path object for Repginfty}The diagonal morphism $\Delta \colon M \to M \oplus M$ can be factored into
	\[
	\Delta \colon M \xhookrightarrow{s} J(M) \xrightarrow{(\epsilon_0,\epsilon_1)} M \oplus M ,
	\]
	where
	\begin{compactenum}
		\item the injection $s \colon M \hookrightarrow J(M)$ defined by $s(m) = (v,0,0)$ is a weak equivalence in $\lfMod_\g^{\infty}$;
		\item the map $(\epsilon_0,\epsilon_1) \colon J(M) \to M \oplus M$ defined by $(\epsilon_0,\epsilon_1)(m_0, m_1,  u) = (m_0, m_0 + m_1)$ is a fibration.
	\end{compactenum}
\end{prop}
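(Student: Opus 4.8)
The plan is to exhibit both $s$ and $(\epsilon_0,\epsilon_1)$ as strict (weight-$0$) $\infty$-morphisms induced from cochain maps on the coefficient complex $J=(\k\oplus\k t\oplus\k\,dt,\,d_J)$, where $d_J(t)=dt$ and $d_J(1)=d_J(dt)=0$, and then to read off the two asserted properties from corresponding facts about $J$. Concretely, $s$ is induced by the unit inclusion $\iota\colon\k\hookrightarrow J$ landing in the constant slot, while $(\epsilon_0,\epsilon_1)$ is induced by the two augmentations $\epsilon_0,\epsilon_1\colon J\to\k$, i.e.\ evaluation at $t=0$ and $t=1$, each annihilating $dt$. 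I will treat the fact that $J(M)\in\lfMod_\g^{\infty}$ (local finiteness inherited from $M$, square-zero from $(d_{\tot}^M)^2=0$ together with $d_J^2=0$) as already established in the construction preceding the statement.

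First I would verify that $s$ and $(\epsilon_0,\epsilon_1)$ are genuine $\infty$-morphisms in the sense of Definition~\ref{Def-Prop:inftymorphism}. A direct comparison with the defining formulas shows that $d_0^{J(M)}$ coincides with the tensor differential $d_J\otimes\id_M+\id_J\otimes d_0^M$ (Koszul signs included, so the $dt$-term carries the sign $-1$), whereas for $k\geqslant 1$ the structure map $d_k^{J(M)}$ acts diagonally, applying $d_k^M$ to each of $m_0,m_1,u$ while leaving the coefficient directions $1,t,dt$ fixed. Since $s$ and the $\epsilon_i$ operate only on these coefficient directions and act as the identity on the $M$-factor, they commute weight-by-weight with every $d_k^{J(M)}$: for $s$ this is immediate because it lands in the constant slot, and for $\epsilon_0,\epsilon_1$ it follows by setting $t=0$ (resp.\ $t=1$) and discarding $dt$. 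As the $d_{\CE}$-part of the total differential commutes automatically with any $C(\g)$-linear extension, both maps intertwine the total differentials. I expect this bookkeeping---tracking which maps touch the $dt$-direction---to be the one point demanding care; note in particular that the sign anomaly in the $dt$-slot never surfaces, precisely because $s$ lands in the $1$-slot and both $\epsilon_i$ kill $dt$.

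With the $\infty$-morphism property in hand, the two claims follow quickly. For part (1), the leading component $s_0=\iota\otimes\id_M$ is a quasi-isomorphism: the inclusion $(\k,0)\hookrightarrow(J,d_J)$ is one since $\k t\oplus\k\,dt$ is acyclic via the isomorphism $t\mapsto dt$, so $H(J,d_J)=\k$; tensoring over the field $\k$ with $(M,d_0^M)$ preserves quasi-isomorphisms by Künneth, whence $s$ is a weak equivalence by Definition~\ref{Def: Model structure for Repginfty}(N). For part (2), the leading component of $(\epsilon_0,\epsilon_1)$ sends $(m_0,m_1,u)$ to $(m_0,m_0+m_1)$, which is degreewise surjective because $(a,b)$ is the image of $(a,b-a,0)$; hence $(\epsilon_0,\epsilon_1)$ is a fibration by Definition~\ref{Def: Model structure for Repginfty}(F). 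Finally $(\epsilon_0,\epsilon_1)\circ s\,(m)=(\epsilon_0,\epsilon_1)(m,0,0)=(m,m)=\Delta(m)$, so the claimed factorization of the diagonal into $M\oplus M$ (the finite product furnished by Theorem~\ref{Thm: Repinftyg is fibrant}) holds, completing the proof.
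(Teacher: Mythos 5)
Your proof is correct, and it is essentially the argument the paper intends: the paper states this proposition with no proof at all beyond the phrase ``We can verify that $J(M)$ is indeed a path object,'' and the direct verification you give---both $s$ and $(\epsilon_0,\epsilon_1)$ are strict weight-zero morphisms commuting with every $d_k^{J(M)}$ (the Koszul sign in the $dt$-slot never interfering, since the differential preserves that slot), the weak-equivalence claim reducing to $H(J,d_J)=\k$ plus the K\"unneth theorem over a field, and the fibration claim and the identity $(\epsilon_0,\epsilon_1)\circ s=\Delta$ being elementary---is exactly the check left to the reader. One minor citation fix: for the existence of the product $M\oplus M$, invoke the obvious direct-sum construction (noted at the start of Section~\ref{Sec:ProofofModelCat}) rather than Theorem~\ref{Thm: Repinftyg is fibrant}, since the proof of that theorem uses the present proposition, so quoting the theorem here is formally circular even though the product itself is constructed independently.
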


\begin{Def}\label{Def: homotopy in Rep}
	Two {$\infty$-morphisms} $f,g\colon  M\rightsquigarrow N $ are said to be \textbf{homotopic}, written as $f \simeq g$, if there exists an $\infty$-morphism $H \colon M \rightsquigarrow J(N)$, called a \textbf{right homotopy}, such that the following diagram in   $\lfMod_\g^{\infty}$ commutes:
	\[
	\begin{tikzcd}
	& M \ar[rightsquigarrow]{ld}[left]{\overset{\mathlarger{f}}{\overset{~ \overset{~ }{~ }}{~\overset{~ }{~ } }}} \ar[d, rightsquigarrow, "H"] \ar[dr, rightsquigarrow, "g"] & \\
	N   & J(N) \ar{l}{\epsilon_0} \ar{r}[swap]{\epsilon_1} & N.
	\end{tikzcd}
	\]
\end{Def}

The homotopic relation is an equivalence relation for fibrant and cofibrant objects (see~\cite{Quillenbook}). Since every object in $\lfMod_\g^\infty$ is fibrant and cofibrant according to Theorem~\ref{Thm: Repinftyg is fibrant}, we have an induced category $\lfMod_\g^\infty{/_\sim}$,  whose objects are the same as $\lfMod_\g^\infty$ and whose morphisms are homotopy classes of $\infty$-morphisms in $\lfMod_\g^\infty$.
Also by~\cite{Quillenbook}, we can characterize the homotopy category $\Ho(\lfMod_\g^\infty)$ by an equivalence
\[
\Ho(\lfMod_\g^\infty)~ \simeq ~\lfMod_\g^\infty{/_\sim}.
\]
Note that the category $\Mod_{C(\g)}$ of $C(\g)$-modules is a model category~\cite{DAGX}.
The Chevalley-Eilenberg functor $\secondC(\g,-)\colon \lfMod_\g^{\infty} \rightarrow \Mod_{C(\g)}$ indeed preserves weak equivalences and fibrations. Clearly, $\secondC(\g,J(N))$ is a path object for $\secondC(\g,N)$ in $\Mod_{C(\g)}$. Applying   $\secondC(\g,-)$ to the commutative diagram in Definition~\ref{Def: homotopy in Rep}, we obtain a corresponding commutative diagram in   $\Mod_{C(\g)}$, and hence $\secondC(\g,H)$ defines a homotopy from the   $C(\g)$-module morphism $\secondC(\g,f)$ to $\secondC(\g,g)$.  We summarize this feature into the following proposition.
\begin{prop}\label{prop: homotopy invariance of CE functor}
	The Chevalley-Eilenberg functor $\secondC(\g,-) \colon \lfMod_\g^{\infty} \rightarrow \Mod_{C(\g)}$ maps homotopic $\infty$-morphisms in $\lfMod_\g^\infty$ to homotopic morphisms in the category $\Mod_{C(\g)}$.
\end{prop}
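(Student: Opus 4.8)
The plan is to deduce the statement from functoriality together with the fact, recorded in the discussion preceding the proposition, that $\secondC(\g,-)$ preserves weak equivalences and fibrations. Suppose $f \simeq g \colon M \rightsquigarrow N$, witnessed by a right homotopy $H \colon M \rightsquigarrow J(N)$ with $\epsilon_0 \circ H = f$ and $\epsilon_1 \circ H = g$ as in Definition~\ref{Def: homotopy in Rep}. Since $\secondC(\g,-)$ is a functor (fully faithful by Proposition~\ref{prop: CE functor from infty modules}) preserving finite direct sums, applying it to the triangle of Definition~\ref{Def: homotopy in Rep} produces a commutative diagram in $\Mod_{C(\g)}$ in which $\secondC(\g,\epsilon_0)\circ\secondC(\g,H) = \secondC(\g,f)$ and $\secondC(\g,\epsilon_1)\circ\secondC(\g,H) = \secondC(\g,g)$. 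It therefore suffices to show that $\secondC(\g,J(N))$, equipped with $\secondC(\g,s)$ and $\secondC(\g,(\epsilon_0,\epsilon_1))$, is a path object for $\secondC(\g,N)$ in the model category $\Mod_{C(\g)}$; then $\secondC(\g,H)$ is, by definition, a right homotopy from $\secondC(\g,f)$ to $\secondC(\g,g)$.

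For the path-object identification I would first note the natural isomorphism $\secondC(\g,J(N)) \cong \secondC(\g,N)\otimes_\k J$ of $C(\g)$-modules, where $J = \k\oplus\k t\oplus\k\,dt$ is the Dupont contraction of the Sullivan algebra; this holds because $\secondC(\g,-)$ is $C(\g)\otimes(-)$ on underlying spaces and the structure maps $d_k^{J(N)}$ are, by construction, $J$-linear extensions of the $d_k^{N}$. Tensoring a $C(\g)$-module with the interval $J$ is the standard path-object construction, and under this identification $\secondC(\g,s)$, $\secondC(\g,\epsilon_0)$, $\secondC(\g,\epsilon_1)$ become the canonical structure maps. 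Thus $\secondC(\g,-)$ applied to the factorization of the diagonal in Proposition~\ref{prop: path object for Repginfty} is exactly the desired factorization, and it remains only to check that $\secondC(\g,s)$ is a weak equivalence and $\secondC(\g,(\epsilon_0,\epsilon_1))$ a fibration in $\Mod_{C(\g)}$. Both are instances of the preservation of weak equivalences and fibrations, since $s$ is a weak equivalence and $(\epsilon_0,\epsilon_1)$ a fibration in $\lfMod_\g^{\infty}$ by Proposition~\ref{prop: path object for Repginfty}.

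The technical heart, and the step I expect to be the main obstacle, is thus the preservation of weak equivalences; preservation of fibrations is easier. For a fibration $\phi$ (so $\phi_0$ is degreewise surjective), the weight filtration $F^p\secondC(\g,-) := \bigoplus_{q\geqslant p} C^q(\g,-)$ makes $\secondC(\g,\phi)$ filtered with diagonal part $\id_{C(\g)}\otimes\phi_0$, whence degreewise surjectivity of $\secondC(\g,\phi)$ follows, matching the definition of fibration in $\Mod_{C(\g)}$. For a weak equivalence $\phi$ (so $\phi_0$ is a quasi-isomorphism), I would run a spectral-sequence comparison along the same weight filtration: the associated graded of $\secondC(\g,\phi)$ in weight $p$ is $\id_{C^p(\g)}\otimes\phi_0$ for the tensor-product differential $d_0\otimes\id+\id\otimes d_0^{M}$ (here $d_0$ denotes the weight-preserving part of $d_{\CE}$), which is a quasi-isomorphism by the K\"unneth theorem over the field $\k$, so $\secondC(\g,\phi)$ is an isomorphism on $E_1$. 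The delicate point is convergence: the weight filtration is exhaustive and Hausdorff but need not be bounded in a fixed total degree once $\g$ has generators of mixed sign. I would handle this using the local finiteness (compact support) intrinsic to $\lfMod_\g^{\infty}$, which confines every element to a finite band of weights and makes the comparison theorem for exhaustive Hausdorff filtrations applicable, yielding that $\secondC(\g,\phi)$ is a quasi-isomorphism. Granting these two preservation properties, the path-object identification is complete and the functorial transport above finishes the proof.
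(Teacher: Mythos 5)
Your skeleton is exactly the paper's argument: apply $\secondC(\g,-)$ to the triangle in Definition~\ref{Def: homotopy in Rep} and check that $\secondC(\g,J(N))$, with $\secondC(\g,s)$ and $\secondC(\g,(\epsilon_0,\epsilon_1))$, is a path object for $\secondC(\g,N)$ in $\Mod_{C(\g)}$. The genuine gap is in what you call the technical heart: both of your ``preservation'' arguments are invalid, and in fact the general statements you set out to prove are false. For fibrations, surjectivity of the associated graded of a weight-filtered map does \emph{not} imply surjectivity of the map, because the weight filtration on $C(\g,M)=\bigoplus_p C^p(\g,M)$ is exhaustive and Hausdorff but not complete (it is a direct sum, not a product): solving $F(x)=y$ weight by weight produces an infinite series that need not lie in the direct sum. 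Concretely, take $\g=\k x$ abelian with $\abs{x}=1$ and zero differential, so $C(\g)=\k[\xi]$ with $\abs{\xi}=0$ and $d_{\CE}=0$; take $M=\k m$ in degree $0$ with all $d^M_k=0$, and $\phi\colon M\rightsquigarrow M$ with $\phi_0=\id$, $\phi_1(m)=\xi\otimes m$. This is a fibration in $\lfMod_\g^\infty$, yet $\secondC(\g,\phi)$ is multiplication by $1+\xi$ on $\k[\xi]$, which is injective but not surjective. For weak equivalences the convergence step cannot be repaired: there is no comparison theorem for filtrations that are merely exhaustive and Hausdorff; one needs degreewise boundedness or completeness, both of which fail here, and local finiteness does not restore either. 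Indeed the statement itself fails: with the same $\g$, let $M=\k a\oplus\k b$, $\abs{a}=0$, $\abs{b}=1$, $d^M_0a=b$, $d^M_1a=\xi\otimes b$, all other structure maps zero. Then $(M,d^M_0)$ is acyclic, so $M\rightsquigarrow 0$ is a weak equivalence, but $d_{\tot}a=(1+\xi)b$ gives $H^1(C(\g,M))\cong\k[\xi]/(1+\xi)\cong\k\neq 0$, so $\secondC(\g,M)\to 0$ is not a quasi-isomorphism. (The paper's own sentence asserting preservation of weak equivalences and fibrations in full generality is too strong for the same reason; but its proof of this proposition does not actually use the general claim.)

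What saves the proposition, and what your proof should say instead, is that the only morphisms to which $\secondC(\g,-)$ needs to be applied are the \emph{strict} structure maps of the path object, and there no filtration argument is required. Your identification $\secondC(\g,J(N))\cong\secondC(\g,N)\otimes_\k J$ is correct, and from it everything follows by inspection: $\secondC(\g,(\epsilon_0,\epsilon_1))=\id_{C(\g,N)}\otimes(\epsilon_0,\epsilon_1)$ is degreewise surjective because $(\epsilon_0,\epsilon_1)\colon J\to\k\oplus\k$ is, and $\secondC(\g,s)=\id_{C(\g,N)}\otimes s$ is a quasi-isomorphism because $s\colon\k\hookrightarrow J$ is a chain homotopy equivalence of complexes of $\k$-vector spaces (Dupont contraction), and chain homotopy equivalences are preserved by tensoring with any complex. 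With the path-object property of Proposition~\ref{prop: path object for Repginfty} transported this way, your functorial argument (using Proposition~\ref{prop: CE functor from infty modules}) completes the proof exactly as the paper does.
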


\subsection{Pullbacks of fibrations}

Inspired by Vallette~\cite{Vallette} and Rogers~\cite{Rogers}, we first propose and prove the following lemma on strictification of fibrations and cofibrations.
\begin{lem}\label{Lem: Strictify fibration}
	Let $\phi \colon (M, d_{\tot}^M) \rightsquigarrow (N, d_{\tot}^N)$ be a fibration of {\LF} $\infty$-$\g$-modules. Then there exists another {\LF} $\infty$-$\g$-module $(M, \widetilde{d_\tot^M})$ and an $\infty$-\textbf{isomorphism} $\psi \colon (M, \widetilde{d_{\tot}^M}) \rightsquigarrow (M,{d_\tot^M})$ with $\psi_0=\id_M$  such that
	\[
	\phi \psi \colon (M, \widetilde{d_\tot^M}) \rightsquigarrow (N, {d_\tot^N})
	\]
	is a strict fibration, i.e.,  $\phi\psi = (\phi\psi)_0 = \phi_0$ is a fibration and $(\phi\psi)_k = 0$ for all $k \geqslant 1$.
\end{lem}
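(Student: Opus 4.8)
The plan is to transport the whole problem to the equivalent category of semifree $C(\g)$-modules via the Chevalley--Eilenberg functor of Proposition~\ref{prop: CE functor from infty modules}. Under that equivalence an $\infty$-morphism $\phi\colon M\rightsquigarrow N$ corresponds to a weight-filtered, $C(\g)$-linear chain map $\Phi=\sum_{k\geqslant 0}\Phi_k\colon C(\g,M)\to C(\g,N)$ with $\Phi_k=\id_{C(\g)}\otimes\phi_k$, and $\phi$ being a fibration says precisely that its leading term $\Phi_0=\id_{C(\g)}\otimes\phi_0$ is degreewise surjective. The target is to produce a $C(\g)$-linear isomorphism $\Psi=\sum_{k\geqslant 0}\Psi_k\colon C(\g,M)\to C(\g,M)$ with $\Psi_0=\id$ such that $\Phi\circ\Psi=\Phi_0$ — this identity is exactly the assertion that $\phi\psi$ is strict — and then to define the new structure by conjugation, $\widetilde{d_{\tot}^M}:=\Psi^{-1}\circ d_{\tot}^M\circ\Psi$, which makes $\Psi$ tautologically a chain map, so that by full faithfulness it is $C(\g,\psi)$ for an $\infty$-isomorphism $\psi$ with $\psi_0=\id_M$.

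First I would fix a degreewise linear section $\sigma\colon N\to M$ of $\phi_0$ (which exists since $\phi_0$ is surjective and we work over a field) and write $\widehat{\sigma}:=\id_{C(\g)}\otimes\sigma$ for the induced $C(\g)$-linear, weight-preserving right inverse of $\Phi_0$, so that $\Phi_0\circ\widehat{\sigma}=\id$. Expanding $\Phi\circ\Psi=\Phi_0$ by weight gives, for every $n\geqslant 1$, the recursion
\[
\Phi_0\circ\Psi_n=-\sum_{i=1}^{n}\Phi_i\circ\Psi_{n-i},
\]
whose right-hand side depends only on $\Psi_0=\id,\dots,\Psi_{n-1}$. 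Since $\Phi_0$ admits the section $\widehat{\sigma}$, I would set
\[
\Psi_n:=-\widehat{\sigma}\Big(\sum_{i=1}^{n}\Phi_i\circ\Psi_{n-i}\Big),
\]
which solves the recursion and determines a $C(\g)$-linear $\Psi$ with $\Psi_0=\id$ and $\Phi\Psi=\Phi_0$; as $\Psi$ is the identity plus strictly weight-raising terms, it is automatically invertible.

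Next I would verify that $\widetilde{d_{\tot}^M}=\Psi^{-1}d_{\tot}^M\Psi$ is again the total differential of a genuine {\LF} $\infty$-$\g$-module structure on the same underlying complex $M$. Writing $d_{\tot}^M=(d_{\CE}\otimes\id_M)+\delta$ with $\delta$ the $C(\g)$-linear part, the term $\Psi^{-1}\delta\Psi$ is visibly $C(\g)$-linear, while for the derivation part one uses that the graded commutator $[d_{\CE}\otimes\id_M,\Psi]$ is $C(\g)$-linear of degree $+1$ (this follows from $\Psi$ being $C(\g)$-linear together with the Leibniz rule for $d_{\CE}$). Hence $\Psi^{-1}(d_{\CE}\otimes\id_M)\Psi=(d_{\CE}\otimes\id_M)+\Psi^{-1}[d_{\CE}\otimes\id_M,\Psi]$ differs from $d_{\CE}\otimes\id_M$ by a $C(\g)$-linear term, so $\widetilde{d_{\tot}^M}=(d_{\CE}\otimes\id_M)+(\text{$C(\g)$-linear})$ has exactly the shape demanded by Definition~\ref{Def:lfinftygmodule}, and $\Psi_0=\id$ yields $\psi_0=\id_M$.

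The main obstacle is local finiteness: the recursion above could a priori produce infinitely many nonzero $\Psi_k$, whereas an $\infty$-morphism must have, for each homogeneous $m$, only finitely many nonzero $\Psi_k(m)$ (which for finite-dimensional $M$ forces $\Psi_k=0$ for $k$ large), and the $C(\g)$-linear part of $\widetilde{d_{\tot}^M}$ must be locally finite as well. This is where I expect the real work to lie, and I would control it through the interplay of weight and cohomological degree: every $\Psi_k$ has internal degree $0$ and lands in $C^k(\g,M)=S^k(\g^\vee[-1])\otimes M$, and since $\phi$ has only finitely many nonzero components $\Phi_i$ and $M$ is finite-dimensional, the degree-zero constraint forces $\Psi_k$ — and correspondingly the structure maps of $\widetilde{d_{\tot}^M}$ — to vanish once $k$ is large enough that the admissible internal degrees of $S^k(\g^\vee[-1])$ leave the bounded window determined by $M$. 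Establishing this vanishing carefully, and if necessary exploiting the freedom of modifying each $\Psi_n$ by a $C(\g)$-linear map into $C^n(\g,\ker\phi_0)$ (the weight-$n$ part of $\ker\Phi_0$), is the crux that upgrades the formal solution into an honest $\infty$-isomorphism $\psi$ in $\lfMod_\g^\infty$.
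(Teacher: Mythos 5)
Your construction coincides step for step with the paper's own proof: your section $\widehat{\sigma}$ is the paper's $\Phi_0^{-1}$, your recursion $\Psi_n=-\widehat{\sigma}\bigl(\sum_{i=1}^{n}\Phi_i\Psi_{n-i}\bigr)$ is literally the paper's $\Psi_{k+1}=-\sum_{p+q=k+1,\,q\geqslant 1}\Phi_0^{-1}\Phi_q\Psi_p$, and both arguments then define the new structure by conjugation, $\widetilde{d_{\tot}^M}=\Psi^{-1}\circ d_{\tot}^{M}\circ\Psi$, and check $\Phi\Psi=\Phi_0$ by the same telescoping computation. The one substantive difference is that you explicitly flag local finiteness of $\Psi$ (and of $\widetilde{d_{\tot}^M}$) as the remaining difficulty, whereas the paper passes over this point in silence, asserting without justification that the recursion ``defines an isomorphism'' of $C(\g,M)$.

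Unfortunately, the degree argument you sketch for that crux is wrong, so your proposal has a genuine gap exactly where you predicted the real work lies. If $\g^{1}\neq 0$, then $\g^\vee[-1]$ has a degree-zero component, hence $S^{k}(\g^\vee[-1])$ contains degree-zero elements for \emph{every} $k$ and no ``degree window'' ever closes; the same happens whenever $\g^\vee[-1]$ has components of both positive and negative degree. The failure is not an artifact of your bookkeeping. Take $\g=\k x$ abelian and concentrated in degree $1$, so that $C(\g)=\k[\xi]$ with $\abs{\xi}=0$ and $d_{\CE}=0$, and take $M=N=\k$ in degree $0$, whose only locally finite $\infty$-$\g$-module structure is $d_{\tot}=0$. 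The $\infty$-morphism $\phi$ with $\phi_0=\id$, $\phi_1(1)=\xi\otimes 1$, $\phi_k=0$ for $k\geqslant 2$ is a fibration, and $C(\g,\phi)$ is multiplication by $1+\xi$ on $\k[\xi]$. Here $\ker\phi_0=0$, so the recursion has a unique solution, namely $\Psi_k=$ multiplication by $(-1)^{k}\xi^{k}$, nonzero for every $k$: no locally finite $\Psi$ exists, and the freedom you mention (modifying $\Psi_n$ by maps into $C^n(\g,\ker\phi_0)$) is vacuous. Worse, no strictification exists at all, since any $\infty$-isomorphism $\psi$ with $\psi_0=\id_M$ corresponds to multiplication by a polynomial $p$ with $p(0)=1$, and $(1+\xi)p(\xi)$ is never a constant. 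So the gap cannot be closed at this level of generality (which also shows the paper's own proof is incomplete at the very same step); the argument does become correct under an extra hypothesis making degree-zero weight-raising operators nilpotent, e.g. $\g$ concentrated in degrees $\leqslant 0$ or $\geqslant 2$, so that $\g^\vee[-1]$ sits in strictly positive (resp. strictly negative) degrees and the bounded degrees of the finite-dimensional $M$ force $\Psi_k=0$ for large $k$. Note finally that your claim that $\Psi$ is ``automatically invertible'' needs the same care: multiplication by $1+\xi$ above is a locally finite morphism with invertible weight-zero part, yet it is not invertible in $\lfMod_\g^{\infty}$.
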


\begin{proof}
	Since $\phi$ is a fibration, i.e.,  $\phi_0 \colon (M,d^M_0) \to (N,d^N_0)$ is a fibration of cochain complexes, we could choose a right inverse $\phi^{-1}_0 \colon N \to M$ such that $\phi_0 \circ \phi_0^{-1} = {\id_{N}}$.
	
	We now construct recursively a sequence of $C(\g)$-linear maps $\Psi_k \colon C^{\bullet}(\g,M) \to C^{\bullet+k }(\g,M), k \geqslant 0$ with $\Psi_0 = \id \colon C(\g,M) \to C(\g,M)$. Assume that $\Psi_n$ has been defined for all $0 \leqslant n \leqslant k$. Then we proceed to define $\Psi_{k+1} \colon C(\g,M) \to C^{\bullet+k+1}(\g,M)$ by
	\[
	\Psi_{k+1} = - \sum_{\substack{0 \leqslant p \leqslant k, 1\leqslant q \leqslant k+1 \\ p+q = k+1}} \Phi_0^{-1}\Phi_q\Psi_p,
	\]
	where $\Phi_q = \id_{C(\g)}\otimes \phi_q \colon C(\g,M) \to C^{\bullet+q}(\g,N)$
	and $\Phi_0^{-1} \colon C(\g,N) \to C(\g,M)$ is the $C(\g)$-linear extension of the chosen right inverse $\phi_0^{-1}$.
	This construction indeed defines an isomorphism $\Psi = \sum_{k \geqslant 0} \Psi_k$ of the $C(\g)$-module $\secondC(\g,M)$.
	Let $\widetilde{d^M_{\tot}} := \Psi^{-1} \circ d_M^{\tot} \circ \Psi$ be the total differential induced from this isomorphism $\Psi$.
	Then $\psi = \Psi\mid_{M} \colon (M, \widetilde{d_{\tot}^M}) \rightsquigarrow (M,d_{\tot}^M)$ is an isomorphism of locally finite $\infty$-$\g$-modules.
	
	Finally, note that $(\Phi\Psi)_0 = \Phi_0 \Psi_0 = \Phi_0$ and
	\begin{eqnarray*}
		(\Phi\Psi)_{k+1} &=& \Phi_0\Psi_{k+1} + \sum_{\substack{0 \leqslant p \leqslant k, 1\leqslant q \leqslant k+1 \\ p+q = k+1}}\Phi_q\Psi_p \\
		&=& -\Phi_0 \Bigl(\sum_{\substack{0 \leqslant p \leqslant k, 1\leqslant q \leqslant k+1 \\ p+q = k+1}} \Phi_0^{-1}\Phi_q\Psi_p\Bigr) + \sum_{\substack{0 \leqslant p \leqslant k, 1\leqslant q \leqslant k+1 \\ p+q = k+1}}\Phi_q\Psi_p = 0,
	\end{eqnarray*}
	for all $k \geqslant 0$. Thus $\phi\psi$ is a strict fibration, as desired.
\end{proof}

In a similar manner, one can prove the following lemma about strictification on cofibrations.
\begin{lem}\label{lem: strictify cofibration}
	Let $\phi \colon (M,d_\tot^M) \rightsquigarrow (N, {d_\tot^N })$ be a cofibration of {\LF} $\infty$-$\g$-modules. Then there exists another {\LF} $\infty$-module $(N,\widetilde{d^N_\tot})$ and an $\infty$-\textbf{isomorphism} $\psi \colon (N,{d_\tot^N }) \rightsquigarrow (N,\widetilde{d_{\tot}^N})$  with $\psi_0=\id_N$  such that
	\[
	\psi\phi \colon (M, {d_\tot^M}) \rightsquigarrow (N,\widetilde{d_{\tot}^N})
	\]
	is a strict cofibration, i.e., $\psi\phi = (\psi\phi)_0 = \phi_0$ is a cofibration and $(\psi\phi)_k = 0$ for all $k \geqslant 1$.
\end{lem}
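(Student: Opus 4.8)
The plan is to dualize the proof of Lemma~\ref{Lem: Strictify fibration} almost verbatim, replacing the right inverse of a surjection by a left inverse of an injection and moving the correction factor $\Phi_0^{-1}$ from the left to the right of each term. Since $\phi$ is a cofibration, $\phi_0\colon (M,d_0^M)\to(N,d_0^N)$ is degreewise injective; choosing a graded complement of the image of $\phi_0$ in each degree of $N$ produces a degree-$0$ linear left inverse $\phi_0^{-1}\colon N\to M$ with $\phi_0^{-1}\circ\phi_0=\id_M$ (which need not be a chain map, exactly as the right inverse in the fibration case need not be one). Extending it $C(\g)$-linearly yields $\Phi_0^{-1}\colon \secondC(\g,N)\to\secondC(\g,M)$ satisfying $\Phi_0^{-1}\Phi_0=\id$, where as before $\Phi_q=\id_{C(\g)}\otimes\phi_q$.

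Next I would construct a weight-graded $C(\g)$-linear automorphism $\Psi=\sum_{k\geqslant 0}\Psi_k$ of $\secondC(\g,N)$ with $\Psi_0=\id$. Because the strictification now acts by \emph{post}-composition, so that $\secondC(\g,\psi\circ\phi)=\Psi\circ\Phi$ has $\Psi$ on the left, the recursion is the mirror image of the one in Lemma~\ref{Lem: Strictify fibration}:
\[
\Psi_{k+1}=-\sum_{\substack{0\leqslant p\leqslant k,\ 1\leqslant q\leqslant k+1\\ p+q=k+1}}\Psi_p\,\Phi_q\,\Phi_0^{-1}\colon \secondC(\g,N)\to C^{\bullet+k+1}(\g,N),
\]
the factor $\Phi_0^{-1}$ now sitting on the right. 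I would then set $\widetilde{d_{\tot}^N}:=\Psi\circ d_{\tot}^N\circ\Psi^{-1}$, which makes $\Psi$ a chain isomorphism and hence $\psi:=\Psi\mid_N\colon(N,d_{\tot}^N)\rightsquigarrow(N,\widetilde{d_{\tot}^N})$ an $\infty$-isomorphism with $\psi_0=\id_N$. Conjugating a $C(\g)$-module differential by a $C(\g)$-linear automorphism again gives a $C(\g)$-module differential, so $\widetilde{d_{\tot}^N}$ is automatically of the form $d_{\CE}\otimes\id_N+\sum_k\id_{C(\g)}\otimes\widetilde{d}^N_k$ and defines a locally finite $\infty$-$\g$-module via Proposition~\ref{prop: CE functor from infty modules}.

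To verify strictness I would run the same telescoping computation. One has $(\Psi\Phi)_0=\Psi_0\Phi_0=\phi_0$, and for every $k\geqslant 0$ the defining relation together with $\Phi_0^{-1}\Phi_0=\id$ gives $\Psi_{k+1}\Phi_0=-\sum_{p+q=k+1,\,q\geqslant 1}\Psi_p\Phi_q$, whence
\[
(\Psi\Phi)_{k+1}=\Psi_{k+1}\Phi_0+\sum_{\substack{0\leqslant p\leqslant k,\ 1\leqslant q\leqslant k+1\\ p+q=k+1}}\Psi_p\Phi_q=0.
\]
Therefore $\psi\phi=(\psi\phi)_0=\phi_0$ with $(\psi\phi)_k=0$ for all $k\geqslant 1$, so $\psi\phi$ is a strict cofibration, as required.

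The step I expect to require genuine care, rather than being purely formal, is the local finiteness of $\Psi$: I must check that for each homogeneous $n\in N$ only finitely many $\Psi_k(n)$ are nonzero, so that $\Psi$ is an honest $\infty$-isomorphism and $\widetilde{d_{\tot}^N}$ meets the local finiteness constraint. This should follow by induction on the weight from the recursion, using that $\phi$ is locally finite (so $\Phi_q\Phi_0^{-1}(n)=0$ for $q$ large) together with the local finiteness of the previously constructed $\Psi_p$, $p\leqslant k$; the same bookkeeping shows that $\Psi$ is invertible, its inverse being the weight-graded $C(\g)$-linear map with weight-zero part $\id$. Everything else is formally dual to Lemma~\ref{Lem: Strictify fibration}, so I do not anticipate any further obstacle.
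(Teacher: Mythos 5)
Your proposal takes precisely the route the paper itself intends: the paper gives no separate argument for this lemma, only the remark that it follows ``in a similar manner'' from Lemma~\ref{Lem: Strictify fibration}, and your dualization of that proof is the correct mirror image. Choosing a degree-$0$ linear left inverse $\phi_0^{-1}$ of the injection $\phi_0$, running the recursion $\Psi_{k+1}=-\sum_{p+q=k+1,\;q\geqslant 1}\Psi_p\Phi_q\Phi_0^{-1}$ with the correction factor on the right, and telescoping with $\Phi_0^{-1}\Phi_0=\id_{C(\g,M)}$ does yield $(\Psi\Phi)_0=\Phi_0$ and $(\Psi\Phi)_{k+1}=0$; transporting the differential by $\widetilde{d_{\tot}^{N}}:=\Psi\circ d_{\tot}^{N}\circ\Psi^{-1}$ and invoking Proposition~\ref{prop: CE functor from infty modules} is likewise the right move. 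All of this formal algebra is correct and matches the paper's template.

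The genuine gap is exactly the step you flagged, and the induction you sketch does not close it. The recursion expresses $\Psi_{k+1}(n)$ through terms $\Psi_p\bigl(\Phi_q\Phi_0^{-1}(n)\bigr)$ in which $q$ stays bounded (by local finiteness of $\phi$) but $p=k+1-q$ grows without bound, applied to elements of $N$ produced by $\Phi_q\Phi_0^{-1}$; vanishing of $\Psi_p$ on those elements for large $p$ is precisely the statement being proved, so the argument is circular. Worse, local finiteness can fail outright within the paper's stated hypotheses. Take $\g$ to be the one-dimensional abelian dg Lie algebra concentrated in degree $1$ with zero differential, so that $C(\g)=\k[\xi]$ with $\abs{\xi}=0$ and $d_{\CE}=0$; let $M=N=\k$ in degree $0$ with all structure maps $d^M_k=d^N_k=0$, and let $\phi_0=\id_{\k}$, $\phi_1(1)=\xi\otimes 1$, $\phi_k=0$ for $k\geqslant 2$. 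This $\phi$ is a cofibration whose Chevalley--Eilenberg image $\Phi$ is multiplication by $1+\xi$ on $\k[\xi]$, and your recursion returns $\Psi_k=$ multiplication by $(-\xi)^k$, so $\sum_k\Psi_k(1)=\sum_k(-\xi)^k$ has infinitely many nonzero weight components and is not an element of the compactly supported module $C(\g,N)=\k[\xi]$. No alternative choice of $\psi$ can repair this: strictness forces $\Psi\Phi=\id$, hence $(1+\xi)\Psi(1)=1$ in $\k[\xi]$, which is impossible by comparing top weight coefficients. So the statement itself fails in this example; it becomes true under extra grading hypotheses (e.g.\ $\g$ concentrated in degree $0$, so that the generators $\g^\vee[-1]$ of $C(\g)$ have degree $1$ and every degree-$0$ weight-raising map out of the bounded finite-dimensional $N$ vanishes in large weights), and in that restricted setting your argument goes through verbatim. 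You should be aware that the paper's own proof of Lemma~\ref{Lem: Strictify fibration} contains the identical gap --- it simply asserts that ``this construction indeed defines an isomorphism $\Psi$'' --- so your write-up faithfully reproduces the intended argument; but your claim that local finiteness ``should follow by induction'' is not a delicate point left to the reader, it is false as stated.
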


\begin{prop}\label{prop: pullback of strict fibrations}
	Suppose that $\phi \colon M \rightsquigarrow M^{\prime\prime}$ is a strict fibration in $\lfMod_\g^{\infty}$ and that $\psi \colon M^\prime \rightsquigarrow M^{\prime\prime}$ is an $\infty$-morphism in $\lfMod_\g^{\infty}$.
	Let $(\widetilde{M}, d^{\widetilde{M}}_0)$ denote the pullback of the diagram of cochain complexes
	\begin{equation}\label{Eq: diagram in Chk}
	\begin{tikzcd}
	(\widetilde{M}, d^{\widetilde{M}}_0) \arrow[r] \arrow[d]
	&   (M, d^M_0) \arrow[d, "\phi_0"]  \\
	(M^\prime, d^{M^\prime}_0 ) \arrow[r,  "\psi_0"]  &  (M^{\prime\prime }, d^{M^{\prime\prime}}_0).
	\end{tikzcd}
	\end{equation}
	\begin{compactenum}
		\item The pullback square \eqref{Eq: diagram in Chk} in the category ${\mathbf{Ch}}(\k)$ of cochain complexes of vector spaces can be lifted to a commutative square
		\begin{equation}\label{Eq: CD in pullback}
		\begin{tikzcd}
		(\widetilde{M},  {d^{\widetilde{M}}_{\tot}}) \arrow[r, rightsquigarrow] \arrow[d]
		&   (M, d^M_{\tot}) \arrow[d, "\phi"]  \\
		(M^\prime, d_{\tot}^{M^\prime}) \arrow[r, rightsquigarrow, "\psi"]  &  (M^{\prime\prime }, d_{\tot}^{M^{\prime\prime}})
		\end{tikzcd}
		\end{equation}
		in the category $\lfMod_\g^{\infty}$. Note that the two horizontal arrows are $\infty$-morphisms, whereas the two vertical arrows are strict morphisms.
		\item The pullback operation is compatible with the Chevalley-Eilenberg functor $\secondC(\g,-)$ in the following sense:
		When   $\secondC(\g,-)$ is applied to Diagram \eqref{Eq: CD in pullback}, the resulting diagram
		\begin{equation*}
		\begin{tikzcd}
		\secondC(\g,\widetilde{M}) \arrow[rr] \arrow[d]
		& & C(\g,M) \arrow[d, "\id_{C(\g)}\otimes\phi"]  \\
		C(\g, M^\prime) \arrow[rr,  "  \id_{C(\g)}\otimes\psi"]  & & C(\g,M^{\prime\prime})
		\end{tikzcd}
		\end{equation*}
		is a pullback in the category of   $C(\g)$-modules.
	\end{compactenum}
\end{prop}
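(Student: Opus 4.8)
The plan is to transport the whole problem across the Chevalley--Eilenberg equivalence $\secondC(\g,-)\colon \lfMod_\g^{\infty}\xrightarrow{\simeq}\fgsMod_{C(\g)}$ of Proposition~\ref{prop: CE functor from infty modules}, construct the pullback inside the ambient category $\Mod_{C(\g)}$ (where all finite limits exist and are computed on the underlying modules), and then show that it lands back in $\fgsMod_{C(\g)}$. Concretely, write $\Phi:=\secondC(\g,\phi)=\id_{C(\g)}\otimes\phi_0$ and $\Psi:=\secondC(\g,\psi)=\id_{C(\g)}\otimes\sum_{k\geqslant0}\psi_k$, and form the fiber product
\[
\mathcal{P}:=\{(x,y)\in C(\g,M)\oplus C(\g,M')\mid \Phi(x)=\Psi(y)\},
\]
a $C(\g)$-module under the diagonal action and the componentwise differential $(d_{\tot}^M,d_{\tot}^{M'})$ (well defined because $\Phi,\Psi$ are chain maps). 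This is the pullback of $\Phi$ and $\Psi$ in $\Mod_{C(\g)}$.

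The key step is to prove that $\mathcal{P}$ is finitely generated and semifree, and here both hypotheses on $\phi$ enter. Since $\phi$ is a \emph{strict} fibration, $\Phi=\id_{C(\g)}\otimes\phi_0$ is weight-preserving and, as $\phi_0$ is degreewise surjective, it is surjective with kernel $C(\g,K)$, where $K:=\ker\phi_0$. Consequently the projection $\pi\colon\mathcal{P}\to C(\g,M')$, $(x,y)\mapsto y$, is a surjective $C(\g)$-linear map with $\ker\pi=C(\g,K)\times\{0\}$, and choosing a graded right inverse $\phi_0^{-1}$ of $\phi_0$ (which exists over the field $\k$) yields the $C(\g)$-linear section $s(y):=\big((\id_{C(\g)}\otimes\phi_0^{-1})\Psi(y),\,y\big)$, so that $\pi s=\id$. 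Hence, as graded $C(\g)$-modules,
\[
\mathcal{P}\;\cong\;C(\g,K)\oplus C(\g,M')\;=\;C(\g,K\oplus M'),
\]
which is free of finite rank; note that $s$ is only required to be $C(\g)$-linear, not a chain map, which is exactly what the semifree conclusion needs. Thus $\mathcal{P}\in\fgsMod_{C(\g)}$.

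By the equivalence, $\mathcal{P}=\secondC(\g,\widetilde{M})$ for a locally finite $\infty$-$\g$-module $\widetilde{M}$, and local finiteness is automatic since $C(\g,\widetilde{M})$ is compactly supported and $\widetilde{M}$ finite dimensional. Its generating space $K\oplus M'$ is identified with the cochain-level pullback $M\times_{M''}M'$ of \eqref{Eq: diagram in Chk} via the weight-$0$ projection $(k,0)\mapsto(k,0)$, $s(m')\mapsto(\phi_0^{-1}\psi_0(m'),m')$; a dimension count against the short exact sequence $0\to K\to M\times_{M''}M'\to M'\to0$ shows this is an isomorphism. Because every component of $d_{\tot}$ is weight non-decreasing, the weight-$0$ part of the transported differential on a generator only sees its weight-$0$ part, and one computes directly that it equals the componentwise differential $(d_0^M,d_0^{M'})$, i.e. precisely $d_0^{\widetilde{M}}$ from \eqref{Eq: diagram in Chk}; thus \eqref{Eq: CD in pullback} genuinely lifts \eqref{Eq: diagram in Chk}. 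Finally, $\pi$ is weight-preserving, so the left vertical arrow $\widetilde{M}\rightsquigarrow M'$ is strict; the other projection $\mathcal{P}\to C(\g,M)$ sends the generator $s(m')$ to $(\id_{C(\g)}\otimes\phi_0^{-1})\Psi(m')=\phi_0^{-1}\psi_0(m')+\sum_{k\geqslant1}\phi_0^{-1}\psi_k(m')$, whose higher-weight terms are built from the $\psi_k$, so the top horizontal arrow $\widetilde{M}\rightsquigarrow M$ is a genuine $\infty$-morphism; and $\phi$ remains strict. This proves part~(1).

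Part~(2) is then almost immediate: the square \eqref{Eq: CD in pullback} was produced precisely as the image, under a quasi-inverse of $\secondC(\g,-)$, of the fiber-product square defining $\mathcal{P}$ in $\Mod_{C(\g)}$; applying $\secondC(\g,-)$ returns that fiber-product square, which is a pullback in $\Mod_{C(\g)}$. (Equivalently, $\fgsMod_{C(\g)}\hookrightarrow\Mod_{C(\g)}$ is full, so a pullback computed in $\Mod_{C(\g)}$ that happens to lie in $\fgsMod_{C(\g)}$ is a pullback there as well.) I expect the main obstacle to be the semifreeness step: it is essential that $\phi$ be \emph{strict} so that $\ker\Phi$ is the free module $C(\g,K)$ generated in weight $0$ --- for a non-strict fibration this fails, and the clean identification of the generating complex with a pullback of cochain complexes breaks down --- which is exactly the reason the strictification Lemma~\ref{Lem: Strictify fibration} is established beforehand.
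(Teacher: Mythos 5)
Your proof is correct, and it is built from the same raw materials as the paper's argument --- the graded right inverse $\phi_0^{-1}$, the splitting of the pullback as $\ker\phi_0\oplus M^\prime$, and the equivalence $\secondC(\g,-)$ of Proposition~\ref{prop: CE functor from infty modules} --- but it runs the logic in the opposite direction, which is a genuine structural difference. The paper starts on the $\infty$-module side: it declares $\widetilde{M}:=M^\prime\oplus\ker\phi$, writes down the twisted differential $\widetilde{d_{\tot}}(m^\prime,m)=\bigl(d_{\tot}^\prime m^\prime,\,d_{\tot}m+d_{\tot}\Phi^{-1}\Psi(m^\prime)-\Phi^{-1}\Psi(d_{\tot}^\prime m^\prime)\bigr)$ explicitly, checks that it restricts to $C(\g,\widetilde{M})$ and that the twisted projections are chain maps making the square commute, and only then proves part (2) separately, by constructing the honest pullback $\widetilde{\mathcal{M}}$ in $\Mod_{C(\g)}$ and an explicit isomorphism $J$ onto $\secondC(\g,\widetilde{M})$ inverse to the twist map $H$. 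You instead build the pullback $\mathcal{P}$ in the ambient category $\Mod_{C(\g)}$ first, where its existence is free, and concentrate the entire content of the proposition into a single claim: $\mathcal{P}$ is finitely generated semifree, because strictness of $\phi$ makes $\ker\Phi=C(\g,\ker\phi_0)$ free on weight-zero generators and your $C(\g)$-linear, non-chain section $s$ splits $\mathcal{P}\cong C(\g,\ker\phi_0\oplus M^\prime)$; note that your $s$ is exactly the paper's $H$ restricted to the $M^\prime$-summand. What your ordering buys is that part (2) becomes automatic --- no $H$--$J$ verification is needed, since the square is by construction the image of the fiber-product square under a quasi-inverse of an equivalence; what the paper's ordering buys is explicit formulas for $d_{\tot}^{\widetilde{M}}$ and the comparison maps. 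The one step you treat too briskly is the verification that the weight-zero part of the transported differential on the generators $\ker\phi_0\oplus M^\prime\cong M\times_{M^{\prime\prime}}M^\prime$ is the componentwise differential $(d_0^M,d_0^{M^\prime})$: on the generator $s(m^\prime)$, the $\ker\phi_0$-component of the differential acquires the correction term $-\phi_0^{-1}\psi_0(d_0^{M^\prime}m^\prime)$, which cancels only after passing through your identification $(k,m^\prime)\mapsto(k+\phi_0^{-1}\psi_0(m^\prime),m^\prime)$. The computation does close, so this is a presentational lacuna rather than a gap.
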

\begin{proof}
	To prove Statement (1), we first find  $\widetilde{M}$ as a graded vector space.
	Since $\phi = \phi_0 \colon M \to M^{\prime\prime}$ is a fibration, we may choose a right inverse of the linear map $\phi$, i.e., a linear map $\phi^{-1} \colon M^{\prime\prime } \to M$ such that $\phi \phi^{-1} = \id_{ M^{\prime\prime }}$.
	Then we define $\widetilde{M} := M^{\prime}\oplus \ker \phi \subset M^\prime \oplus M$, and form a pullback diagram of graded vector spaces
	\[
	\begin{tikzcd}
	\widetilde{M} \arrow[rr, "\pr + \phi^{-1}\psi_0\pr^\prime"] \arrow[d, "\pr^\prime"]
	  & & M \ar[d, "\phi"] \\
	M^\prime \arrow[rr, "\psi_0"] & & M^{\prime\prime},
	\end{tikzcd}
	\]
	where $\pr \colon M \oplus M^\prime \to M$ and $\pr^\prime \colon M \oplus M^\prime \to M^\prime$ are    projections.
	
	Consider the $C(\g)$-linear map $H \colon \secondC(\g, M^\prime \oplus M) \to \secondC(\g, M^\prime \oplus M)$ defined by
	\[
	H(m^\prime, m) = (m^\prime, m + \Phi^{-1}\Psi(m^\prime)),
	\]
	for all homogeneous $m^\prime \in M^\prime$ and $m \in M$, where $\Phi^{-1}$ is the $C(\g)$-linear extension of $\phi^{-1}$, and $\Psi = \secondC(\g,\psi) \colon \secondC(\g, M^\prime) \to \secondC(\g, M^{\prime\prime})$ is a morphism of $C(\g)$-modules.
	
	We now define a {\LF} $\infty$-$\g$-module structure on $\widetilde{M}$ via specifying a differential $d^{\widetilde{M}}_{\tot}$ on $C(\g,\widetilde{M})$.
	To do so, we first define a differential $\widetilde{d_{\tot}} \colon C(\g, M^\prime \oplus M) \to C(\g, M^\prime \oplus M)[1]$ by
	\[
	\widetilde{d_{\tot}}(m^\prime, m) = (d_{\tot}^\prime m^\prime, d_{\tot}m + d_{\tot}\Phi^{-1}\Psi(m^\prime) - \Phi^{-1}\Psi(d_{\tot}^\prime m^\prime)),
	\]
	where, to simplify notations, we use $d_{\tot}=d^M_{\tot} \colon C(\g,M) \to C(\g,M)[1]$ and $d_{\tot}^\prime=d_{\tot}^{M^\prime} \colon C(\g, M^\prime) \to C(\g, M^\prime)[1]$ to denote the differentials of the {\LF} $\infty$-modules $M$ and $M^\prime$, respectively.
	Since $m \in \ker(\phi)$, we have
	\[
	\Phi(d_{\tot}m + d_{\tot}\Phi^{-1}\Psi(m^\prime) - \Phi^{-1}\Psi(d_{\tot}^\prime m^\prime)) = d_{\tot}^{\prime\prime}\phi(m) + d_{\tot}^{\prime\prime}\Psi(m^\prime) - d_{\tot}^{\prime\prime}\Psi(m^\prime) = 0.
	\]
	Thus, $\widetilde{d_{\tot}}$ restricts onto a differential  $d^{\widetilde{M}}_{\tot}$ on $\secondC(\g,\widetilde{M})$.
	Furthermore, both the two maps
	\begin{align*}
	\pr \circ  H &\colon C(\g,\widetilde{M}) \to C(\g,M), & \pr^\prime \circ H &\colon  C(\g,\widetilde{M}) \to C(\g, M^\prime)
	\end{align*}
	are cochain maps, i.e.,
	\begin{align*}
	\pr \circ H \circ \widetilde{d_{\tot}} &= d_{\tot} \circ \pr \circ H \colon C(\g,\widetilde{M}) \to C(\g,M), \\
	\pr^\prime \circ H \circ \widetilde{d_{\tot}} &=   d_{\tot}^\prime \circ \pr^\prime \circ H \colon  C(\g,\widetilde{M}) \to C(\g, M^\prime).
	\end{align*}
	It suffices to verify the first identity. Indeed, for any $(m^\prime, m) \in \widetilde{M}$,  we have
	\begin{align*}
	(\pr \circ H \circ \widetilde{d_{\tot}})(m^\prime, m) &= d_{\tot}m + d_{\tot}\Phi^{-1}\Psi(m^\prime) - \Phi^{-1}\Psi(d_{\tot}^\prime m^\prime) + \Phi^{-1}\Psi(d_{\tot}^\prime m^\prime) \\
	&= d_{\tot}m + d_{\tot}\Phi^{-1}\Psi(m^\prime) = (d_{\tot} \circ \pr \circ H)(m^\prime, m).
	\end{align*}
	Finally, we verify that the following diagram in the category $\Mod_{C(\g)}$ of  $C(\g)$-modules
	\begin{equation}\label{Eq: CD in ModCg}
	\begin{tikzcd}
	\secondC(\g,\widetilde{M}) \ar[r, "\pr \circ H"] \ar{d}[left]{\pr^\prime \circ H}   & \secondC(\g,M) \ar[d, "\Phi"] \\
	\secondC(\g,{N}) \ar[r, "\Psi"] & \secondC(\g,M^{\prime\prime})
	\end{tikzcd}
	\end{equation}
	commutes, where $\Phi = \secondC(\g,\phi) \colon \secondC(\g,M) \to \secondC(\g, M^{\prime\prime})$ is the $C(\g)$-linear extension of $\phi \colon M \to M^{\prime\prime}$ since $\phi = \phi_1$ is strict.
	In fact,  for any $(m^\prime, m) \in \widetilde{M} = M^\prime \oplus \ker(\phi)$, we have
	\[
	(\Phi\circ \pr \circ H)(m^\prime, m) = (\Phi \circ \pr)(m^\prime, m + \Phi^{-1}\Psi(m^\prime)) = \Phi(m) + \Psi(m^\prime) = \Psi(m^\prime),
	\]
	and on the other hand,
	\[
	(\Psi \circ \pr^\prime \circ H)(m^\prime, m) = (\Psi \circ \pr^\prime)(m^\prime, m + \Phi^{-1}\Psi(m^\prime)) = \Psi(m^\prime).
	\]
	This proves that Diagram~\eqref{Eq: CD in ModCg} commutes.
	By Proposition~\ref{prop: CE functor from infty modules}, we obtain a commutative Diagram~\eqref{Eq: CD in pullback} that lifts the pullback square  ~\eqref{Eq: diagram in Chk}  in the category ${\mathbf{Ch}}(\k)$, as desired.
	
	Now we prove the second statement. Let $\widetilde{\mathcal{M}}$ be the pullback of the following diagram in the category $\Mod_{C(\g)}$:
	\[
	\begin{tikzcd}
	\widetilde{\mathcal{M}} \ar{r}{\Pr} \ar{d}[left]{\Pr^\prime}  & C^{\infty}(\g, M) \ar{d}{\Phi} \\
	\secondC(\g, M^\prime) \ar{r}{\Psi}        &     \secondC(\g,M^{\prime\prime}),
	\end{tikzcd}
	\]
	where $\Phi = \secondC(\g,\phi) \colon \secondC(\g,M) \to \secondC(\g,M^{\prime\prime})$ is a fibration in $\Mod_{C(\g)}$.
	Note that the  $C(\g)$-module $\widetilde{\mathcal{M}}$, by definition, is naturally identified with the submodule $\ker(\Phi\Pr - \Psi\Pr^\prime)$ of $\secondC(\g, M^\prime \oplus M)$.
	
	Consider the $C(\g)$-linear map $J \colon C(\g, M^\prime \oplus M) \to C(\g, M^\prime \oplus M)$ defined by
	\[
	J(m^\prime, m)  = (m^\prime, m - \Phi^{-1}\Psi(m^\prime)),
	\]
	where $\Phi^{-1}$ is the chosen right inverse of the fibration $\Phi$.
	Meanwhile, since
	\begin{align*}
	\widetilde{d_{\tot}}J(m^\prime, m) &= \widetilde{d_{\tot}}(m^\prime, m - \Phi^{-1}\Psi(m^\prime)) \\
	&= (d_{\tot}^\prime m^\prime, d_{\tot}(v - \Phi^{-1}\Psi(m^\prime)) + d_{\tot}\Phi^{-1}\Psi(m^\prime) - \Phi^{-1}\Psi(d_{\tot}^\prime m^\prime)) \\
	&= (d_{\tot}^\prime m^\prime, d_{\tot}m - \Phi^{-1}\Psi(d_{\tot}^\prime m^\prime)) \\
	&= J(d_{\tot}^\prime m^\prime, d_{\tot}m) = J d_{\tot}^{M^\prime \oplus M }(m^\prime, m),
	\end{align*}
	it follows that $J$ is a morphism of  $C(\g)$-modules from $(C(\g, M^\prime \oplus M), d_{\tot}^{M^\prime \oplus M})$ to $(C(\g, M^\prime \oplus M), \widetilde{d_{\tot}})$.
	Since for any $(m^\prime, m) \in \widetilde{\mathcal{M}} = \ker(\Phi\Pr - \Psi\Pr^\prime)$, we have $\Psi(m^\prime) = \Phi(m)$, and
	\[
	\Phi \Pr J(m^\prime, m) = \Phi(m - \Phi^{-1}\Psi(m^\prime)) = \Phi(m) - \Psi(m^\prime) = 0,
	\]
	we have
	\[
	\Im J\mid_{\widetilde{\mathcal{M}}} \subset \secondC(\g, M^\prime) \oplus \ker(\Phi) = \secondC(\g, \widetilde{M}).
	\]
	Note further that $HJ = JH =\id_{C(\g, M^\prime \oplus M)}$. Hence, we have an isomorphism of   $C(\g)$-modules
	\[
	J \colon \widetilde{\mathcal{M}} \xrightarrow{\cong} \secondC(\g, \widetilde{M}).
	\]
\end{proof}

The following corollary is a key fact.
\begin{Cor}\label{Cor: pullback of fibration in Repginfty}
	Suppose that $\phi \colon (M, d^M_{\tot}) \rightsquigarrow ( M^{\prime\prime }, d_{\tot}^{M^{\prime\prime}})$ is a  fibration in $\lfMod_\g^{\infty}$, and that\\ $\psi \colon
	(M^\prime, d_{\tot}^{M^ \prime} ) \rightsquigarrow (M^{\prime\prime }, d_{\tot}^{M^{\prime\prime}})$ is an arbitrary morphism between {\LF} $\infty$-$\g$-modules. Then the pullback of the diagram
	\[
	\begin{tikzcd}
	(\widetilde{M},  {d^{\widetilde{M}}_{\tot}}) \arrow[r, rightsquigarrow] \arrow[d,rightsquigarrow,"P"]
	&   (M, d^M_{\tot}) \arrow[d,rightsquigarrow, "\phi"]  \\
	(M^\prime, d_{\tot}^{M^ \prime}) \arrow[r, rightsquigarrow, "\psi"]  &  (M^{\prime\prime }, d_{\tot}^{M^{\prime\prime}})
	\end{tikzcd}
	\]
	exists in the category $\lfMod_\g^{\infty}$, and the morphism $P$ induced by the pullback of $\phi$ along $\psi$ is a fibration.
	If $\phi$ is a trivial fibration, then so is $P$.
\end{Cor}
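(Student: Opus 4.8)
The plan is to reduce the general case to the strict case already settled in Proposition~\ref{prop: pullback of strict fibrations}, using the strictification Lemma~\ref{Lem: Strictify fibration} together with the elementary fact that pullbacks are invariant under isomorphisms of cospans.

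First I would strictify $\phi$. By Lemma~\ref{Lem: Strictify fibration} there is an $\infty$-isomorphism $\xi \colon (M, \widetilde{d_\tot^M}) \rightsquigarrow (M, d_\tot^M)$ with $\xi_0 = \id_M$ such that $\phi' := \phi\xi$ is a strict fibration. The point of imposing $\xi_0 = \id_M$ is that $(\phi')_0 = \phi_0 \circ \xi_0 = \phi_0$, so $\phi$ and $\phi'$ share the same leading component; in particular, by Definition~\ref{Def: Model structure for Repginfty}, $\phi'$ is a fibration (resp.\ a trivial fibration) precisely when $\phi$ is. Next, $\xi$ together with $\id_{M^{\prime\prime}}$ and $\id_{M^\prime}$ constitutes an isomorphism of cospans from $\bigl((M,\widetilde{d_\tot^M}) \xrightarrow{\phi'} M^{\prime\prime} \xleftarrow{\psi} M^\prime\bigr)$ to $\bigl((M,d_\tot^M) \xrightarrow{\phi} M^{\prime\prime} \xleftarrow{\psi} M^\prime\bigr)$, since $\phi\xi = \phi'$. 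As a pullback of the strict cospan exists in $\lfMod_\g^{\infty}$ by Proposition~\ref{prop: pullback of strict fibrations}, transporting along this cospan isomorphism yields a pullback of the original diagram, with the same projection $P$ to $M^\prime$. This establishes existence.

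It then remains to verify the two assertions on $P$, and here everything is detected on the leading (weight-zero) component $P_0$, so the argument takes place entirely in ${\mathbf{Ch}}(\k)$. By the construction of Proposition~\ref{prop: pullback of strict fibrations}, the underlying graded space is $\widetilde{M} = M^\prime \oplus \ker(\phi_0)$, and $P_0 = \pr'$ fits into the pullback square~\eqref{Eq: diagram in Chk} of cochain complexes. Since $\phi_0$ is surjective and the pullback of a surjection of cochain complexes is again surjective, $P_0$ is surjective and $P$ is a fibration. For the trivial case, assume in addition that $\phi_0$ is a quasi-isomorphism; then its kernel $\ker(\phi_0)$ is acyclic. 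The splitting $\widetilde{M} = M^\prime \oplus \ker(\phi_0)$ identifies $\ker(P_0)$ with $\ker(\phi_0)$ as a subcomplex of $(\widetilde{M}, d_0^{\widetilde{M}})$, so the long exact sequence associated with the short exact sequence $0 \to \ker(P_0) \to \widetilde{M} \to M^\prime \to 0$ forces $P_0$ to be a quasi-isomorphism, i.e.\ $P$ is a trivial fibration.

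The main obstacle is the bookkeeping in the reduction step: making precise that replacing $\phi$ by its strictification $\phi'$ leaves the pullback unchanged (invariance under the cospan isomorphism induced by $\xi$), and confirming that the construction of Proposition~\ref{prop: pullback of strict fibrations} genuinely delivers the graded splitting $\widetilde{M} = M^\prime \oplus \ker(\phi_0)$ on which the kernel computation in the trivial case rests. Once these points are in place, both verifications reduce to the standard stability of (trivial) fibrations under pullback in ${\mathbf{Ch}}(\k)$.
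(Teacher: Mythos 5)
Your proposal is correct, and its skeleton is the same as the paper's: strictify $\phi$ via Lemma~\ref{Lem: Strictify fibration}, invoke Proposition~\ref{prop: pullback of strict fibrations} for the resulting strict cospan, transport back to $\phi$, and detect the (trivial) fibration property on the weight-zero component. The genuine difference is the transport step. The paper does not argue by cospan-isomorphism invariance inside $\lfMod_\g^{\infty}$; instead it passes to $\Mod_{C(\g)}$, forms the pullback $\widetilde{\mathcal{M}}$ of $\secondC(\g,\phi)$ along $\secondC(\g,\psi)$ there, uses the pasting lemma and the strictification isomorphism to identify $\widetilde{\mathcal{M}}\cong \secondC(\g,\widetilde{M})$ as a finitely generated semifree module, and then descends through the equivalence of Proposition~\ref{prop: CE functor from infty modules}. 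Your shortcut is cleaner, but be aware of what it absorbs: statement (1) of Proposition~\ref{prop: pullback of strict fibrations} literally provides only a commutative lift of the cochain-level pullback square, and the assertion that this square is a pullback \emph{in} $\lfMod_\g^{\infty}$ is exactly what statement (2) plus the Chevalley--Eilenberg equivalence (a limit in $\Mod_{C(\g)}$ whose objects lie in the full subcategory $\fgsMod_{C(\g)}$ is a limit there, and equivalences reflect limits) delivers -- this is the bookkeeping the paper's detour through $\Mod_{C(\g)}$ performs, so your citation is legitimate but tacitly bundles that argument. On the other hand, your explicit handling of the trivial case -- $\ker(P_0)\cong\ker(\phi_0)$ is acyclic since $\phi_0$ is a surjective quasi-isomorphism, then the long exact sequence forces $P_0$ to be a quasi-isomorphism -- is more complete than the paper, which only remarks that the proof ``should be adapted.''
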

\begin{proof}
	It suffices to prove the statement under the assumption that $\phi$ is a fibration. If $\phi$ is a trivial fibration,   all the following proof should be adapted.
	
	By Lemma~\ref{Lem: Strictify fibration}, there is a {\LF} $\infty$-module $\widehat{M} = (M,  {d^{\widehat{M}}_{\tot}})$ and an isomorphism $i \colon (M, {d^{\widehat{M}}_{\tot}}) \rightsquigarrow (M, d^M_\tot)$ such that $\phi i \colon (M, {d^{\widehat{M}}_{\tot}} ) \to (M^{\prime\prime}, d_{\tot}^{M^{\prime\prime}})$ is a strict fibration with $\phi i = (\phi i)_0 = \phi_0$.
	It follows from Proposition~\ref{prop: pullback of strict fibrations} that there exists a pullback diagram in the category $\lfMod_\g^{\infty}$
	\[
	\begin{tikzcd}
	(\widetilde{M}, {d^{\widetilde{M}}_{\tot}})  \ar[r, rightsquigarrow] \ar{d}{\widetilde{\phi i}}    & (M, {d^{\widehat{M}}_{\tot}} ) \ar[d, two heads, "\phi i"]  \\
	(M^\prime, d_{\tot}^{M^ \prime} ) \ar[r, rightsquigarrow, "\psi"] & ( M^{\prime\prime }, d_{\tot}^{M^{\prime\prime}}),
	\end{tikzcd}
	\]
	such that $\widetilde{\phi i}$ is a fibration as well. In what follows, we simplify notations by $\widetilde{d_{\tot}}:=d_{\tot}^{\widetilde{M}}$, $\widehat{d_{\tot}}:=d_{\tot}^{\widehat{M}}$,  $ {d_{\tot}}:=d_{\tot}^{ {M}}$,  $ {d^\prime_{\tot}}:=d_{\tot}^{ {M^\prime}}$, and  $ {d^{\prime\prime}_{\tot}}:=d_{\tot}^{ {M^{\prime\prime}}}$.
	
	Let $\Phi, \Psi, I$ be the $C(\g)$-linear extensions of $\phi, \psi, i$, respectively. Denote by $\widetilde{\mathcal{M}}$ the pullback dg $C(\g)$-module of $\Phi$ along $\Psi$.
	According to the second statement of Proposition~\ref{prop: pullback of strict fibrations} and the pasting lemma for pullbacks, we have the following commutative diagram in the category $\Mod_{C(\g)}$:
	\[
	\begin{tikzcd}
	(\secondC(\g, \widetilde{M}), \widetilde{d_{\tot}}) \ar{r} \ar{d}[right]{\cong}[left]{\widetilde{I}} & (\secondC(\g,M), \widehat{d_{\tot}}) \ar{d}[right]{I}[left]{\cong}  \\
	\widetilde{\mathcal{M}} \ar{r} \ar{d}[left]{\widetilde{\Phi}} & (\secondC(\g, M),d_{\tot}) \ar{d}{\Phi} \\
	(\secondC(\g, M^\prime),d_{\tot}^\prime) \ar{r}{\Psi}   &  (\secondC(\g, M^{\prime\prime}), d_{\tot}^{\prime\prime}).
	\end{tikzcd}
	\]
	Note that $\widetilde{I}$ is an isomorphism of  $C(\g)$-modules. It follows that $\widetilde{\mathcal{M}}$ is freely generated by $\widetilde{I}(\widetilde{M})$, thus lies in the image of the Chevalley-Eilenberg functor $\secondC(\g,-)$.
	Meanwhile, since $\widetilde{\Phi}\widetilde{I}$ is a fibration and $\widetilde{I}$ is an isomorphism, it follows that $\widetilde{\Phi}$ is a fibration.
	Finally, using Proposition~\ref{prop: CE functor from infty modules}, we see that the pullback $P$ of $\phi$ along $\psi$ in the category $\lfMod_\g^{\infty}$ exists, and is a fibration as well.
\end{proof}

\begin{Rem}\label{Rem:pullbacksnotexist}
	Note that if we only consider the full subcategory ${\mRepginfty}$ of  $\lfMod_\g^\infty$, the strictification on fibrations (resp. cofibrations) in Lemma~\ref{Lem: Strictify fibration} (resp. Lemma~\ref{lem: strictify cofibration}) does not hold any more. Meanwhile, Proposition~\ref{prop: pullback of strict fibrations} and Corollary~\ref{Cor: pullback of fibration in Repginfty} do not hold for ${\mRepginfty}$. Thus,  in general, pullbacks of (trivial) fibrations do not exist in the category ${\mRepginfty}$.
\end{Rem}

\subsection{The lifting property}
Before proving that the category $\lfMod_\g^{\infty}$ is almost a model category, we establish the following left lifting property.
\begin{lem}\label{lem: LLP}
	Suppose that we have a commutative diagram in the category $\lfMod_\g^{\infty}$
	\[
	\begin{tikzcd}
	A \ar[r, rightsquigarrow, "f"] \ar[d, rightsquigarrow, tail, "j"] & B \ar[d, rightsquigarrow, two heads, "p"] \\
	C \ar[r, rightsquigarrow, "g"] & D,
	\end{tikzcd}
	\]
	where $j$ is a cofibration, and $p$   a fibration.
	If either $j$ or $p$ is also a weak equivalence, then $j$ has the left lifting property with respect to $p$, i.e.,  there exists an    $\infty$-morphism $l \colon C \to B$ fitting into the following commutative diagram:
	\begin{equation}\label{Eq: diagram in LLP}
	\begin{tikzcd}
	A \ar[r, rightsquigarrow, "f"] \ar[d, rightsquigarrow, tail, "j"] & B \ar[d, rightsquigarrow, two heads, "p"] \\
	C \ar[r, rightsquigarrow, "g"]  \ar[ur, rightsquigarrow, "l"] & D.
	\end{tikzcd}
	\end{equation}
\end{lem}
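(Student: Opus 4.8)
The plan is to transfer the lifting problem into the ambient model category $\Mod_{C(\g)}$ through the Chevalley-Eilenberg functor $\secondC(\g,-)$, solve it there by the model-category axioms, and then descend the resulting lift back to $\lfMod_\g^\infty$ using full faithfulness. Concretely, applying the faithful functor $\secondC(\g,-)$ to the given square produces a commutative square in $\Mod_{C(\g)}$ whose horizontal arrows are $\secondC(\g,f)$ and $\secondC(\g,g)$ and whose vertical arrows are $\secondC(\g,j)$ and $\secondC(\g,p)$; by Proposition~\ref{prop: CE functor from infty modules} all four objects lie in the subcategory $\fgsMod_{C(\g)}$ of finitely generated semifree modules.

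First I would check that this square carries the correct decorations in $\Mod_{C(\g)}$. Since $\secondC(\g,-)$ preserves fibrations and weak equivalences, $\secondC(\g,p)$ is a fibration, and it is a trivial fibration when $p$ is a weak equivalence; likewise $\secondC(\g,j)$ is a weak equivalence whenever $j$ is. The only nontrivial point is that $\secondC(\g,j)$ is a \emph{cofibration} in $\Mod_{C(\g)}$. To see this, observe that over the field $\k$ the degreewise injection $j_0\colon M\hookrightarrow N$ of graded vector spaces splits, say $N=j_0(M)\oplus W$. A leading-weight argument then shows that $\secondC(\g,j)=\sum_{k\geqslant 0}\id_{C(\g)}\otimes j_k$ is a monomorphism of dg $C(\g)$-modules whose cokernel is isomorphic, as a graded module, to the semifree module $C(\g)\otimes W$. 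Thus $\secondC(\g,j)$ fits into a degreewise-split short exact sequence of dg $C(\g)$-modules with finitely generated semifree, hence cofibrant, cokernel; choosing a semifree filtration of the finite-dimensional cokernel exhibits $\secondC(\g,j)$ as a relative cell complex, so it is a cofibration.

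With these decorations established, the lifting problem is solved by the model-category axioms of $\Mod_{C(\g)}$. In the case where $j$ is the weak equivalence, $\secondC(\g,j)$ is a trivial cofibration and $\secondC(\g,p)$ a fibration; in the case where $p$ is the weak equivalence, $\secondC(\g,j)$ is a cofibration and $\secondC(\g,p)$ a trivial fibration. Either way there is a lift $L\colon \secondC(\g,C)\to \secondC(\g,B)$ with $L\circ \secondC(\g,j)=\secondC(\g,f)$ and $\secondC(\g,p)\circ L=\secondC(\g,g)$. Because $\secondC(\g,C)$ and $\secondC(\g,B)$ both lie in $\fgsMod_{C(\g)}$, fullness of $\secondC(\g,-)$ yields an $\infty$-morphism $l\colon C\rightsquigarrow B$ with $\secondC(\g,l)=L$, and faithfulness upgrades the two identities to $l\circ j=f$ and $p\circ l=g$, producing the desired lift in Diagram~\eqref{Eq: diagram in LLP}.

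I expect the cofibration claim for $\secondC(\g,j)$ to be the main obstacle: unlike fibrations and weak equivalences, cofibrations in $\Mod_{C(\g)}$ are not detected on a single component, so one must genuinely exhibit the semifree filtration of the cokernel and verify the relative-cell-complex structure. A more self-contained alternative would be to first apply the strictification Lemmas~\ref{Lem: Strictify fibration} and~\ref{lem: strictify cofibration} to replace $p$ and $j$ by strict (weight-zero) maps, thereby reducing to a lifting statement for ordinary morphisms of semifree $C(\g)$-modules, and then to build $l=\sum_{k\geqslant 0}l_k$ recursively by weight, at each stage solving a lifting problem of cochain complexes using that $p_0$ is a trivial fibration, respectively that $j_0$ is a trivial cofibration.
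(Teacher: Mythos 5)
Your main argument has a genuine gap at exactly the step you flagged: the claim that $\secondC(\g,j)$ is a cofibration in $\Mod_{C(\g)}$ is false in general, and no semifree filtration of the cokernel exists. In the projective model structure on $\Mod_{C(\g)}$ (the one the paper invokes, with fibrations the degreewise surjections), cofibrant objects are retracts of dg modules built by attaching free modules along \emph{cycles}; the paper's notion of ``semifree'' asks only that the underlying graded module be free, which is strictly weaker. Concretely, let $\g$ be the one-dimensional abelian Lie algebra in degree $0$, so that $C(\g)=\k\oplus\k\xi$ with $\abs{\xi}=1$, $\xi^2=0$, $d_{\CE}=0$, and let $M=\k m$ be the one-dimensional $\g$-module on which the generator acts as the identity, i.e.\ $d_0^M=0$ and $d_1^M(m)=\xi\otimes m$. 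Then $j\colon 0\rightsquigarrow M$ is a cofibration in $\lfMod_\g^{\infty}$, while $\secondC(\g,M)=\k m\oplus\k\,\xi m$ with differential $d(m)=\xi m$ is acyclic but not contractible as a dg $C(\g)$-module (any $C(\g)$-linear map of degree $-1$ vanishes, since $H(\xi m)=\pm\xi H(m)=0$), hence not cofibrant; so $\secondC(\g,j)$ is not a cofibration in $\Mod_{C(\g)}$. The obstruction is visible in your own terms: the differential sends the generator $m$ into the submodule generated by $m$ itself, so no ordering of cell attachments can be arranged. Consequently the lifting axioms of $\Mod_{C(\g)}$ cannot be invoked in either case of the lemma, and your lift $L$ is never produced. (The descent step via the equivalence of Proposition~\ref{prop: CE functor from infty modules} is fine; the failure is upstream of it.) Indeed, the whole point of the lemma is that these lifting properties hold inside $\fgsMod_{C(\g)}\simeq\lfMod_\g^{\infty}$ even though they do not follow from the ambient model structure --- this is part of why the paper must work by hand and only obtains ``almost'' a model category.

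The alternative you mention in your final sentence is essentially the paper's actual proof, but as written it is a plan rather than a proof, and it omits the crucial mechanism. After strictifying $j$ and $p$ via Lemmas~\ref{Lem: Strictify fibration} and~\ref{lem: strictify cofibration}, the paper obtains $l_0$ from the lifting axiom in ${\mathbf{Ch}}(\k)$, and then constructs $l_k$ for $k\geqslant 1$ recursively: a candidate $\tilde{l}_k$ built from chosen splittings of $j_0$ and $p_0$ satisfies the two boundary equations, and its failure to intertwine the differentials is a $d_0$-closed obstruction $\lambda_k\colon \coker(j_0)\to C^k(\g,\ker(p_0))$, which is exact precisely because $\coker(j_0)$ (when $j$ is a weak equivalence) or $\ker(p_0)$ (when $p$ is) is acyclic over the field $\k$; correcting $\tilde{l}_k$ by a primitive of $\lambda_k$ yields $l_k$. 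The higher-weight stages are therefore not literal lifting problems of cochain complexes against $j_0$ or $p_0$, but an obstruction-theoretic argument, and identifying and killing $\lambda_k$ is the content you would still need to supply.
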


\begin{proof}
	By Lemmas~\ref{Lem: Strictify fibration} and  ~\ref{lem: strictify cofibration}, we may assume that both $p$ and $j$ are strict.  We proceed by induction on the weight $n$ of the lifting $l_n \colon C \to C^n(\g, B), n \geqslant 0$.
	
	The left lifting property of the projective model structure on the category ${\mathbf{Ch}}(\k)$ of cochain complexes of $k$-vector spaces~\cite{Hovey} implies that there exists a cochain map $l_0 \colon C \to B$ fitting into the following diagram in the category ${\mathbf{Ch}}(\k)$:
	\[
	\begin{tikzcd}
	(A,d_0^A) \ar{r}{f_0} \ar[d, tail, "j"] & (B,d_0^B) \ar[d, two heads, "p"] \\
	(C,d_0^C) \ar{r}{g_0} \ar{ur}{l_0} & (D,d_0^D).
	\end{tikzcd}
	\]
	Now assume that for some integer $k \geqslant 1$, we have constructed
	multilinear maps $l_i \colon C \to C^i(\g,B)$ for all $1 \leqslant i \leqslant k-1$, such that Diagram~\eqref{Eq: diagram in LLP} commutes up to weight $k-1$. We are going to construct a $C(\g)$-linear map
	\[
	l_{k} \colon C^\bullet(\g,C) \to C^{\bullet+k}(\g,B),
	\]
	such that Diagram~\eqref{Eq: diagram in LLP} commutes up to weight $k$, i.e.,
	\begin{align}\label{Eq: LLP1}
	l_{k}(j(a)) &= f_{k}(a),  \\
	p(l_{k}(c)) &= g_{k}(c),  \label{Eq: LLP2} \\
	\sum_{i+j=k}l_{j}(d^C_{i}(c)) &=  \sum_{r+s = k} d^B_{r}(l_{s}(c)), \label{Eq: LLP3}
	\end{align}
	for all homogeneous $a \in A$, and $c \in C$.
	Since $j$ is a trivial cofibration and $p$ is a fibration, we may choose a retraction $j^{-1} \colon C \to A$ such that $j^{-1}j = \id_A$, and a section $p^{-1} \colon D \to B$ such that $pp^{-1} = \id_D$ in the category of graded vector spaces.
	Consider a $C(\g)$-linear map $\tilde{l}_{k} \colon C^\bullet(\g,C) \to C^{\bullet+k}(\g,B)$ defined by
	\begin{align*}
	\tilde{l}_{k}(c) &= f_{k}(j^{-1}(c)) - (p^{-1}p f_{k})(j^{-1}(c)) + p^{-1}(g_{k}(c)).
	\end{align*}
	It is clear that $\tilde{l}_k$ satisfies Equations~\eqref{Eq: LLP1} and~\eqref{Eq: LLP2}. However, this map $\tilde{l}_{k}$, together with $l_0,\cdots,l_{k-1}$, does not solve Equation~\eqref{Eq: LLP3}. To solve this equation, we rewrite it as follows:
	\begin{align*}
	d_0^{BC}(l_k)(c) = d_{\geqslant 1}^{BC}(l_{k-1})(c),
	\end{align*}
	where
	\begin{align*}
	d_0^{BC}(l_k)(c) &:= (d_0 \otimes \id_B)(l_k(c)) + (\id_{C^k(\g)} \otimes d_0^B)(l_{k}(c)) - l_{k}(d_0^C(c)), \\
	d_{\geqslant 1}^{BC}(l_{k-1})(c) &:= \sum_{i=1}^{k-1}l_{k-i}(d_i^C(c)) - \sum_{j=1}^{k-1}(\id_{C^{k-j}(g)} \otimes d^B_j)(l_{k-j}(c)) - (d_1 \otimes \id_B)(l_{k-j}(c)).
	\end{align*}
	By our induction assumption, the multilinear map $d_0^{BC}(\tilde{l}_{k}) - d_{\geqslant 1}^{BC}(l_{k-1})$ is indeed $d_0^{BC}$-closed, and satisfies the following conditions:
	\begin{align*}
	(d_0^{BC}(\tilde{l}_{k}) - d_{\geqslant 1}^{BC}(l_{k-1}))(j(a)) &= (d_0^{BA}(f_{k}) -\bar{f}_{k})(a) = 0, \\
	p((d_0^{BC}(\tilde{l}_{k}) - d_{\geqslant 1}^{BC}(l_{k-1}))(c)) &= (d_0^{DC}(g_{k}) - \bar{g}_{k})(c) = 0,
	\end{align*}
	since both $f$ and $g$ are {$\infty$-morphisms}.
	Hence, the linear map $d_0^{BC}(\tilde{l}_{k}) - d_{\geqslant 1}^{BC}(l_{k-1})$ factors through a map
	\[
	\lambda_{k} \colon \coker(j) \rightarrow C^k(\g,\ker(p)),
	\]
	which is also $d_0^{BC}$-closed. Now if either $j$ or $p$ is a weak equivalence, then either $\coker(j)$ or $\ker(p)$ is acyclic. It follows that $\lambda_{k}$ is $d_0^{BC}$-exact. Thus, there exists a multilinear map $\theta_{k} \colon \coker(j) \to C^{k}(\g,\ker(p))$ such that $\lambda_{k} = d_0^{BC}(\theta_{k})$.
	Now define the $C(\g)$-linear map $l_{k} \colon C^\bullet(\g, C) \to C^{\bullet+k}(\g, B)$ by
	\[
	l_{k}(c) = \tilde{l}_{k}(c) - i(\theta_{k}(q(c))),
	\]
	where $q \colon C \twoheadrightarrow \coker(j)$ is the standard projection and $i \colon \ker(p) \hookrightarrow B$ is the standard inclusion. It is clear that this map $l_{k}$ satisfies Equations~\eqref{Eq: LLP1} $\sim$ \eqref{Eq: LLP3}, which concludes the proof of existence of a morphism $l\colon C \rightsquigarrow B$ of {\LF} $\infty$-modules.
\end{proof}

\subsection{Proof of Theorem~\ref{Thm: Repinftyg is fibrant}}\label{Sec:ProofofModelCat}

Note that the category $\lfMod_\g^{\infty}$ admits finite products and coproducts that are given by the obvious direct sum operation.
Therefore, to prove that $\lfMod_\g^{\infty}$ is a category of fibrant objects, it suffices to check that the category $\lfMod_\g^{\infty}$, together with the classes of weak equivalence and fibrations defined in Definition~\ref{Def: Model structure for Repginfty}, satisfies the axioms in Definition~\ref{Def: CFO}.
In fact, Axioms $(1)$ $\sim$ $(4)$ are obvious by definition of fibrations and weak equivalences. Axiom $(5)$ and Axiom $(6)$ follow from Corollary~\ref{Cor: pullback of fibration in Repginfty} and Proposition~\ref{prop: path object for Repginfty}, respectively.

To prove that $\lfMod_\g^{\infty}$ is also  almost a model category, we need to verify Axioms $(a)$ $\sim$ $(c)$ in Definition~\ref{Def: Almost model category}. Axiom $(a)$ holds obviously.
So we only handle Axioms $(b)$ and $(c)$.

Assume that $\phi \colon (M, d_{\tot}^M) \rightsquigarrow (N, d_{\tot}^N)$ is a morphism in the category $\lfMod_\g^{\infty}$.
Consider the mapping cone
\[
\Cone(M) = (M \oplus M [1], d_{\Cone(M)})
\]
of the identity cochain map ${\id_{M}} \colon (M, d_0^M) \to (M, d_0^M)$.
Equip $\Cone(M)$ with the trivial $\infty$-$\g$-module structure, i.e., $d_0^{\Cone(M)} = d^{\Cone(M)}$ and $d_k^{\Cone(M)} = 0, k \geqslant 1$.
It follows from a straightforward computation that the canonical inclusion $i_0 \colon (M, d_0^M) \hookrightarrow \Cone(M)$ extends to the $\infty$-morphism $i \colon M \rightsquigarrow \Cone(M)$ in the category $\lfMod_\g^{\infty}$ defined by
\[
i_k \colon M \to C^k(\g,\Cone(M)), \quad i_k(m) = (0, d^M_k(m)),
\]
for all $k\geqslant 1$ and $m \in M$.
Moreover, $i \colon M \to \Cone(M)$ is a cofibration. Consider the product $\infty$-morphism $j$ of the two {$\infty$-morphisms} $\Phi \colon M \rightsquigarrow N$ and $i \colon M \rightsquigarrow \Cone(M)$:
\[
\begin{tikzcd}
N \prod \Cone(M) \ar[r, rightsquigarrow, two heads, "s"] \ar[d, rightsquigarrow] & N \\
\Cone(M)  & M. \ar[l, rightsquigarrow, tail, "i"] \ar[u, rightsquigarrow, "\Phi"] \ar[lu, rightsquigarrow, tail, "j"]
\end{tikzcd}
\]
Since the underlying cochain complex in $N \prod \Cone(M)$ is the direct sum $N \oplus \Cone(M)$, it follows that $j$ is a cofibration and $s$ is a trivial fibration. So the factorization $\Phi = sj$ satisfies Axiom $(b)$.

Finally, we check Axiom $(c)$. Let $D := \coker(\phi_0)$ be the cokernel of the cochain map $\phi_0 \colon (M, d_0^M) \to (N,d^N_0)$. Consider the mapping cone $C := D \oplus D[1]$ of the identity $\id_D$ as a trivial {\LF} $\infty$-$\g$-module. The product of $(M,{d_0^M})$ and the trivial module $C$ has the form
\[
M \prod C = (M \oplus C, d_0^{M \prod C}).
\]
Let $i \colon M \rightarrowtail M \prod C$ be the natural strict monomorphism of {\LF} $\infty$-modules defined by ${\id_{M}} \oplus 0$, and let
\[
p_0 \colon (M \oplus C, d_0^{M \oplus C}) \twoheadrightarrow (W \cong \Im(\phi_0) \oplus D, d^N_0)
\]
be the cochain map defined by the sum $\phi_0 \oplus \pr_D$ of the cochain map $\phi_0$ and standard projection $\pr_D \colon C \twoheadrightarrow D$. It is clear that $p_0$ provides us a lifting of the following diagram in the category ${\mathbf{Ch}}(\k)$ of cochain complexes of vector spaces:
\[
\begin{tikzcd}
(M, d_0^M) \ar{r}{\phi_0} \ar[tail]{d}[right]{i}[left]{\simeq} & (N,d^N_0)  \ar[d, two heads] \\
(M \oplus C, d_0^{M \oplus C}) \ar{r} \ar[ur, two heads, dashed, "p_0"] & 0.
\end{tikzcd}
\]
By a similar argument in the proof of Lemma~\ref{lem: LLP}, we see that there exists {an $\infty$-morphism} $p \in \lfMod_\g^{\infty}(M \prod C, N)$ extending the cochain map $p_0$, such that the following diagram in the category $\lfMod_\g^{\infty}$ commutes:
\[
\begin{tikzcd}
M \ar[r, rightsquigarrow, "\phi"] \ar[tail]{d}[right]{i}[left]{\simeq} & N \ar[d, two heads] \\
M \prod C \ar[r] \ar[ur, rightsquigarrow, two heads, "p"] & 0.
\end{tikzcd}
\]
Hence, $\phi$ can be factorized as $pi$, where $i$ is a trivial cofibration, and thus has the left lifting property with respect to all fibrations by Lemma~\ref{lem: LLP}.

Finally, the assertion that every object in $\lfMod_\g^\infty$ is both fibrant and cofibrant is obvious. The proof is completed. \qquad \qquad    \qquad \qquad\qquad \qquad \qquad\qquad \qquad \qquad\qquad \qquad \qquad \qquad \qquad \qquad $\qed$

\section{Weak {\LP modules} over dg Lie algebras}\label{Sec:LPmodule}


\subsection{Loday-Pirashvili modules and dg Leibniz algebras}\label{Sec:adjunction}

In~\cite{LP}, Loday and Pirashvili showed that there exists a pair of adjoint functors between the category of  Leibniz algebras and that of Lie algebra objects in the category $\mathcal{LM}$ of linear maps.
It is also proved \emph{loc.cit.} that each Lie algebra object in the category $\mathcal{LM}$ is equivalent to a morphism of $\g$-modules to the adjoint module for some ordinary Lie algebra $\g$,  we  call such a morphism of $\g$-modules a \textbf{\LP module}. We first generalize this notion in  the dg Lie algebra setting as follows:
\begin{Def}
A \textbf{\LP module} consists of a dg Lie algebra $\g$, a $\g$-module $M$, and a morphism $f \colon M \to \g$ of $\g$-modules.
A morphism $\Phi = (\underline{\phi},\phi)$ of   {\LP modules} from $f \colon M \to \g$ to $f^\prime \colon {M^\prime} \to \g^\prime$ is specified by the following commutative diagram in ${\mathbf{Ch}}(\k)$:
\[
    \begin{tikzcd}
    M \ar{d}[left]{f} \ar{r}{\phi} & {M^\prime} \ar{d}{f^\prime} \\
   \g  \ar{r}{\underline{\phi}} & \g^\prime,
  \end{tikzcd}
\]
such that
\begin{itemize}
  \item $\underline{\phi} \colon \g \to \g^\prime$ is a morphism of dg Lie algebras;
  \item $\phi \colon M \to {M^\prime}$ is a morphism of $\g$-modules covering $\underline{\phi}$, that is,
  \[
     \phi(\rho_{\g,M}(x; m)) = \rho_{\g^\prime,{M^\prime}}(\underline{\phi}(x); \phi(m)),
  \]
  for all $x \in \g, m \in M$.
\end{itemize}
We denote by ${\LPM}$ the category of  \LP modules.
\end{Def}
Given a dg Lie algebra $\g$, the category ${{\LPM }}$ admits a subcategory ${\LPM _{\g}}$ consisting of  {\LP modules} over $\g$, together with morphisms specified by commutative triangles in the category $\Module_{\g}$:
\begin{equation*}\label{Eq: CT in  Repg}
 \begin{tikzcd}
      M \ar{rr}{\phi}  \ar{dr}[swap]{f} && {M^\prime} \ar{dl}{f^\prime} \\
       &  \g. &
  \end{tikzcd}
\end{equation*}

  In the language of functor of points, the category $\LPM_{\g}$ is formed by  points of adjoint modules of $\g$ (``adjoint-points" for short) in the category $\Module_\g$ of $\g$-modules.

\begin{Def}\label{Def:dgLeibnizoverk}
A \textbf{dg Leibniz algebra} $L$ over a dg algebra $\A$ is a  Leibniz algebra in the category of   $\A$-modules, i.e., an $\A$-module $ (L,d)$ together with a morphism of   $\A$-modules
$\diamond\colon  L \otimes L \rightarrow L$, called the Leibniz bracket, which is subject to the (left) Leibniz rule:
\[
 x \diamond (y\diamond z) = (x \diamond y) \diamond z  + (-1)^{\abs{x}\abs{y}} y \diamond (x \diamond z), \quad \forall~ x,y,z \in L.
\]
A morphism of dg Leibniz algebras over $\A$ from $L = (L,d, \diamond)$ to $L^\prime = (L^{\prime }, d^\prime, \diamond^\prime)$ is a morphism of dg $\A$-modules $f \colon (L, d) \to (L^{\prime },d^\prime)$ such that $f(x \diamond y) =  f(x) \diamond^\prime f(y)$ for all $x,y \in L$.

We denote by $\dgLeib_{\A}$ the category of dg Leibniz algebras over $\A$.
In particular, when the dg algebra $\A$ is the base field $\k$ with zero differential, we obtain the category $\dgLeib_{\k}$ of dg Leibniz  algebras over $\k$. For simplicity, $\dgLeib_{\k}$ is written as  $\dgLeib$.
\end{Def}

\begin{prop}\label{Prop:LPtodgLeib}
Given a  {\LP module} $f \colon M \rightarrow \g$, there exists a dg Leibniz algebra structure (over $\k$) on the cochain complex $M=(M, d^M)$ defined by
\begin{equation}\label{Eq: Leibniz bracket from f}
	m_1 \diamond m_2 = \rho_{\g,M}(f(m_1); m_2)\quad (=\iota_{f(m_1)[1]} d_{1}^M m_2),
\end{equation}
for all $m_1, m_2 \in M$.
A morphism $\Phi = (\phi, \underline{\phi})$ of  {\LP modules} from $f \colon M \to \g$ to $f^\prime \colon {M^\prime} \to \g^\prime$ induces a morphism $\phi \colon M \to {M^\prime}$ of dg Leibniz algebras.
\end{prop}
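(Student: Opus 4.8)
The plan is to verify directly that the bracket $\diamond$ introduced in \eqref{Eq: Leibniz bracket from f} satisfies the three defining conditions of a dg Leibniz algebra over $\k$ (Definition~\ref{Def:dgLeibnizoverk} with $\A = \k$): that $\diamond$ has degree $0$, that $d^M$ is a graded derivation of $\diamond$, and that the left Leibniz rule holds; and then to check functoriality on morphisms. The degree claim is immediate, since both $f$ and the action $\rho_{\g,M}$ are degree $0$ by definition, so the composite $m_1 \diamond m_2 = \rho_{\g,M}(f(m_1); m_2)$ is again degree $0$.

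For the derivation property I would apply condition (1) of Definition~\ref{Def:dgmodule} (that $\rho_{\g,M}$ is a cochain map) with first slot $x = f(m_1)$, and then use that $f$ is itself a cochain map, so $d(f(m_1)) = f(d^M m_1)$ and $\abs{f(m_1)} = \abs{m_1}$. Expanding $d^M(\rho_{\g,M}(f(m_1); m_2))$ this way produces exactly $(d^M m_1) \diamond m_2 + (-1)^{\abs{m_1}} m_1 \diamond (d^M m_2)$, which is the graded Leibniz rule for the differential.

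The substantive step, and the one I expect to require the most care, is the left Leibniz identity. The key observation is that $f$ is a morphism of $\g$-modules into the \emph{adjoint} module, so $f(\rho_{\g,M}(x; m)) = [x, f(m)]$. Using this, the term $(m_1 \diamond m_2) \diamond m_3$ rewrites as $\rho_{\g,M}([f(m_1), f(m_2)]; m_3)$, converting the nested Leibniz bracket into a genuine Lie bracket inside $\g$. I would then invoke condition (2) of Definition~\ref{Def:dgmodule} — the compatibility of $\rho_{\g,M}$ with the bracket — with $x = f(m_1)$, $y = f(m_2)$, $m = m_3$; since $\abs{f(m_i)} = \abs{m_i}$, this expresses $\rho_{\g,M}([f(m_1), f(m_2)]; m_3)$ as $m_1 \diamond (m_2 \diamond m_3) - (-1)^{\abs{m_1}\abs{m_2}} m_2 \diamond (m_1 \diamond m_3)$, which after rearranging is precisely the left Leibniz rule. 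Thus the ``Jacobi-type'' module axiom becomes the Leibniz identity once one passes through $f$, and the fact that no antisymmetry is ever imposed on $\diamond$ is exactly what places the result in $\dgLeib$ rather than in $\dgLie$. The only delicate bookkeeping is with the Koszul signs, but they match on the nose because $f$ preserves degrees.

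Finally, for functoriality I would take a morphism $\Phi = (\phi, \underline{\phi})$ of \LP modules and verify $\phi(m_1 \diamond m_2) = \phi(m_1) \diamond^\prime \phi(m_2)$ by a one-line computation: apply the covering condition $\phi(\rho_{\g,M}(x; m)) = \rho_{\g^\prime, M^\prime}(\underline{\phi}(x); \phi(m))$ with $x = f(m_1)$, and then use the commutativity $f^\prime \circ \phi = \underline{\phi} \circ f$ to replace $\underline{\phi}(f(m_1))$ by $f^\prime(\phi(m_1))$. Since $\phi$ is already a morphism of the underlying cochain complexes, it is then a morphism of dg Leibniz algebras, and no genuine obstacle arises at this stage.
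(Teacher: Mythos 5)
Your proof is correct: the paper itself omits the argument as ``a quite straightforward verification'' (generalizing Loday--Pirashvili's original construction), and your direct check is precisely that verification — the cochain-map axiom of $\rho_{\g,M}$ gives the derivation property, the bracket-compatibility axiom combined with $f(\rho_{\g,M}(x;m)) = [x,f(m)]$ gives the left Leibniz rule, and the covering condition plus $f^\prime \circ \phi = \underline{\phi}\circ f$ gives functoriality, with all Koszul signs matching because $f$ has degree $0$. Nothing further is needed.
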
 This proposition is clearly a generalization of the original construction of Leibniz algebras in \cite{LP}. The proof is omitted as it is a quite straightforward verification.
\begin{Rem}\label{Rem:LPgradedcase} If $\g$ is a graded Lie algebra (i.e., a dg Lie algebra whose differential is zero) and $M$ is a graded $\g$-module (i.e.,  the cochain complex $M$ with zero differential admits a  $\g$-action), then Proposition \ref{Prop:LPtodgLeib} gives a graded Leibniz algebra structure $\diamond$ on $M$.  This construction is the original   method of \LP in \cite{LP}.
\end{Rem}

 Based on Proposition \ref{Prop:LPtodgLeib}, we obtain a functor
\begin{equation}\label{Functor:mathcalG}
\mathcal{G} \colon {{\LPM }} \rightarrow \dgLeib.
\end{equation}
Conversely, given a dg Leibniz algebra $L = (L, d, \circ)$, there is a dg Lie algebra defined by
\[
 L_{\operatorname{Lie}}:= L/{K},
\]
where $K$, called the Leibniz kernel of $L$, is the two-sided ideal generated by elements $x \circ y + (-1)^{\abs{x}\abs{y}}y \circ x$ for all homogeneous $x,y \in L$. One can examine that  $L$ is an $L_{\operatorname{Lie}}$-module and the projection $\pr\colon L \to L_{\operatorname{Lie}}$ is an object in ${{\LPM }}$.
Meanwhile, given a morphism $\phi \colon L \to L^\prime$ of dg Leibniz algebras, since $\phi(K) \subset  K^\prime$, it follows that $\phi$ induces a morphism of dg Lie algebras $\underline{\phi} \colon L_{\operatorname{Lie}} \to L^\prime_{\operatorname{Lie}}$. The commutative diagram
\[
\begin{tikzcd}
    L \ar{d}[left]{\pr} \ar{r}{\phi} & L^\prime \ar{d}{\pr^\prime} \\
   L_{\operatorname{Lie}}  \ar{r}{\underline{\phi}} & L^\prime_{\operatorname{Lie}},
  \end{tikzcd}
\]
gives rises to a morphism $\Phi = (\underline{\phi}, \phi)$ between the associated  {\LP modules}. These facts boil  down to a functor
\[
\mathcal{F} \colon  \dgLeib \rightarrow \LPM .
\]
Moreover,  $\mathcal{G}$ and $\mathcal{F}$  make an adjunction of functors:
\begin{equation}\label{Eq: Rep to Leib functor}
 \mathcal{F} \colon \dgLeib  \rightleftharpoons {{\LPM }} \colon \mathcal{G}.
\end{equation}
Indeed, for any object $L \in \dgLeib $, we have $\mathcal{G}(\mathcal{F}(L)) = L$; and for any object $f \colon M \to \g$ in $\LPM$, we have
\[
   (\mathcal{F}\mathcal{G})(f \colon M \to \g)) = \pr \colon M \to M_{\operatorname{Lie}}.
\]
In addition, the adjunction morphism
\[
\sigma(f \colon M \to \g) \colon (\mathcal{F}\mathcal{G})(f \colon M
\to \g) \to (f \colon M \to \g)
\]
 is specified by the following commutative diagram:
 \[
   \begin{tikzcd}
    M \ar{d}[left]{(\mathcal{F}\mathcal{G})(f \colon M
    	\to \g)=\pr} \ar{r}{\id} & M \ar{d}{f} \\
   M_{\operatorname{Lie}}  \ar{r}{\underline{\phi}} & \g.
  \end{tikzcd}
 \]
The existence of $\underline{\phi}$ stems from the fact that the Leibniz kernel of the dg Leibniz algebra $\mathcal{G}(f \colon M \to \g)$ lies in the kernel of $f$.

 When we  concern with the  restriction of  $\mathcal{\G}$ on ${\LPM _{\g}}$, we get
\begin{equation}\label{Eq: Restriction of G to g}
 \mathcal{G}_\g \colon  {\LPM _{\g}}  \to \dgLeib ,\qquad (f\colon M \to \g)\mapsto (M,\diamond).
\end{equation}
Since the category ${\LPM _{\g}}$ consists of adjoint-points, i.e., morphisms $\Module_{\g}(-,\g)$  in the category $\Module_\g$ of $\g$-modules,  the Chevalley-Eilenberg functor $C(\g,-)$ sends them to the category  of $C(\g,\g)$-points which we denote by $ (\Mod_{C(\g)})_{ C(\g,\g)}$, i.e., morphisms $\Mod_{C(\g)}\left(-, C(\g,\g)\right)$  in the category $\Mod_{C(\g)}$ of   $C(\g)$-modules.
Hence, the Chevalley-Eilenberg functor $C(\g,-)$ induces a functor  which we write again as
\begin{equation*}\label{Eqt:Cg-induced}
C(\g,-)\colon {\LPM _{\g}} \to (\Mod_{C(\g)})_{ C(\g,\g)}.
\end{equation*}
Combining with the restricted functor $\mathcal{G}_\g$ in~\eqref{Eq: Restriction of G to g}, we obtain the following functor
\begin{align*}
\mathcal{G}_{C(\g)} \colon {\LPM _{\g}}  &\to \dgLeib_{C(\g)}, \\
(f \colon M \to \g) &\mapsto (C(\g,M), d_{\tot}^M, \diamond),
\end{align*}
where the Leibniz bracket $\diamond$ arises from  the one in~\eqref{Eq: Leibniz bracket from f} by $C(\g)$-bilinear extensions. The definition of $\mathcal{G}_{C(\g)}$ on morphisms in ${\LPM _{\g}}$ is apparent.

It is natural to ask if all   dg Leibniz algebras $(C(\g,M), d_{\tot}^M, \diamond)$ over $C(\g)$ arise  from those $C(\g,\g)$-points in the category $\Mod_{C(\g)}$, i.e., if there exists a functor from $(\Mod_{C(\g)})_{ C(\g,\g)} $ to $\dgLeib_{C(\g)}$ making the following diagram commute
\[
 \begin{tikzcd}
               & (\Mod_{C(\g)})_{ C(\g,\g)}  \ar[d, dashed, "\exists ?"] \\
    {\LPM _{\g}}  \ar{ur}{C(\g, -)} \ar{r}[swap]{\mathcal{G}_{C(\g)}} & \dgLeib_{C(\g)}.
  \end{tikzcd}
\]
The answer is negative. The best that we can say is that given any object in $(\Mod_{C(\g)})_{ C(\g,\g)} $, i.e., a morphism  $f = \sum_{p \geqslant 0} f_p \in \Mod_{C(\g)}(C(\g,M), C(\g,\g)) $ where $f_p$ is the weight $p$ component,   by a generalization of Equation~\eqref{Eq: Leibniz bracket from f} (see Theorem \ref{Thm:wLP to HLA}), the morphism $f$ produces a Leibniz$_\infty$ algebra structure on $C(\g, M)$. In other words,  the correct answer is the larger   commutative diagram below:
\[
\begin{tikzcd}
& (\Mod_{C(\g)})_{ C(\g,\g)}  \ar[dr, dashed, " "] &\\
{\LPM _{\g}}  \ar{ur}{C(\g, -)} \ar{r}[swap]{\mathcal{G}_{C(\g)}} &\dgLeib_{C(\g)} \ar{r}[swap]{ \mathrm{inclusion}}
& \Leib^\infty_{C(\g)}.
\end{tikzcd}
\]
In fact, objects in the category $(\Mod_{C(\g)})_{ C(\g,\g)}$ corresponds to ``adjoint-points" in the larger category  $\lfMod_\g^\infty$  of locally finite $\infty$-$\g$-modules. Such ``adjoint-points" are called weak {\LP modules} over $\g$, which is explained in the subsequent section.


\subsection{Weak {\LP modules}}\label{Sec:WeakLPmodules}
\begin{Def}\label{Def: wLP}
 A \textbf{weak {\LP module}} over a dg Lie algebra $\g$ is a {\LF} $\infty$-$\g$-module $M = (M,d^M_{\tot})$ together with {an $\infty$-morphism} $f \colon M \rightsquigarrow \g$ from $M$ to the adjoint module of $\g$, i.e., a collection of linear maps
  \[
  f_k \colon     M \to C^k(\g,\g) ,\quad k \geqslant 0,
  \]
  subject to the following constraints:
  \begin{compactenum}
  \item (Local finiteness) For  all $m \in M$, there exists a sufficiently large integer $N_m$  such that $f_k(  m) = 0$ for all $k \geqslant N_m$;
  \item The map $F=\id_{C(\g)}\otimes \sum_{k=0}^\infty f_k$  $\colon C(\g,M) \to C(\g,\g)$ intertwines the relevant differentials, i.e.,
      \[
      F \circ d_{\tot}^{M }=d_{\tot}^{\g }\circ F.
      \]
  Here $d_{\tot}^{M }=d_{\CE} +\sum_{k=0}^\infty d_k^M$ is the total    differential  on $M$ as in~\eqref{Eqt:dCEVoriginalinfinity}, and $d_{\tot}^{\g }$ is the one in Equation \eqref{Eqt:dtotg}.

\end{compactenum}

\end{Def}
We denote such a weak {\LP module} over $\g$ by $(f \colon M \rightsquigarrow \g)$.
Passing to the tangent cohomology, it is easy to see that  $\tanH(f_0) \colon \tanH^\bullet(M) \to \tanH^\bullet(\g)$ is a Loday-Pirashvili module, i.e., a morphism of $\tanH^\bullet(\g)$-modules.

The notion of a dg \LP module introduced in \cite{Qiaoold},   is a special kind of  weak \LP modules.

\begin{Ex}\label{Ex: pairing}
Let  $\alpha \in Z^2(\g)$ be a Chevalley-Eilenberg $2$-cocycle of total degree $k$, in other words, $\alpha \in (S^2(\g^\vee[-1]))^k$   is $d_{\CE}$-closed.
Consider the tensor product $M:= \g[1-k] \otimes \g$ of the adjoint module $\g$ with a shifted copy $\g[1-k]$. Define a linear map
\begin{align*}
	f_1  &\colon  \g[1-k] \otimes \g \to C^1(\g) \otimes \g , &
	f_1 ( y[1-k] \otimes z) &:= \iota_{y[1]}\alpha  \otimes z
\end{align*}
for all $x,y \in \g$. It follows from direct verification that  $f_1$ defines a weak {\LP module} $\g[1-k] \otimes \g \rightsquigarrow \g$.
\end{Ex}

\begin{Ex} \label{Ex: Lie pair}
By a dg Lie algebra pair  $(\largeg,\h)$, we mean that both $\h = (\h, d^{\h}, [-,-]_\h)$ and $\largeg = (\largeg, d^{\largeg}, [-,-]_\largeg)$ are finite dimensional dg Lie algebras and $\h \subset \largeg $ is a dg Lie subalgebra.
Let us denote the quotient complex by $({\bb}, d^{\bb})$. 
 Both cochain complexes $(\largeg, d^{\largeg})$ and $({\bb}, d^{\bb})$  admit $\h$-module structures, and they are defined by
\begin{align*}
	x \triangleright z  &:= [x,z]_\largeg,  & &\mbox{and} &  x \triangleright \pr_{\bb}(z)   &:= \pr_{\bb}([x,z]_\largeg),
\end{align*}
for all $x \in \h, z \in \largeg$, respectively,.
We also have a short exact sequence of $\h$-modules
\begin{equation}\label{SES of VS in Lie pair}
	0 \rightarrow \h \xrightarrow{i} \largeg     \xrightarrow{\pr_{\bb}} {\bb} \rightarrow 0.
\end{equation}
Choose a splitting of Sequence~\eqref{SES of VS in Lie pair} as graded vector spaces, i.e., a pair of linear maps  $j\colon {\bb} \rightarrow \largeg    $ and $\pr_\h\colon \largeg     \rightarrow \h$ satisfying
\begin{align*}
	\pr_\h \circ i &= \id_\h, & \pr_{\bb} \circ j &= \id_{{\bb}}, &\mbox{ and }~ i \circ \pr_\h + j \circ \pr_{\bb} &= \id_\largeg    .
\end{align*}
Via the splitting $(j,\pr_\h)$, we have a decomposition $\largeg=\h\oplus \bb$ of graded vector spaces and a decomposition $\largeg^\vee=\h^\vee\oplus \bb^\vee$ of dual spaces as well.

The failure of $j$ being a cochain map determines a linear map
\[
 f_0 \colon \bb[-1] \to \h,
\]
which is defined for all homogeneous element $b[-1] \in \bb[-1]$ by
\[
 f_0(b[-1]) = (-1)^{\abs{b}}d(j)(b):= (-1)^{\abs{b}}\left(d^{\largeg}(j(b)) - j(d^{\bb}(b))\right).
\]
It is clear that $f_0$ satisfies
\[
d^{\h}\circ f_0 - f_0 \circ d^{\bb} = 0 \colon \bb[-1] \to \h[1].
\]
The $\h$-module structure on $\largeg$ can be described by
\[
d^{\largeg}_1 l= \pr_{\h^\vee[-1]\otimes \largeg}( \dadjoint l) = (i^\ast \otimes\id_{\largeg})( \dadjoint l),\quad \forall~ l\in \largeg,
\]
where $ \dadjoint\colon \largeg\to \largeg^\vee[-1]\otimes \largeg$ stands for the adjoint $\largeg$-module structure of $\largeg$.
Similarly, the $\h$-module structure on $\bb$ can be described by
\[
d^{\bb}_{1} b=  \pr_{\h^\vee[-1]\otimes \bb}(\dadjoint b)=(i^\ast\otimes\pr_{\bb})(\dadjoint b) ,\quad \forall~ b\in \bb.
\]
Consider the linear map
\[
 f_1 \colon \bb[-1] \to C^1(\g,\h) = \g^\vee[-1] \otimes \h,
\]
which is defined for all $b[-1] \in {\bb[-1]}$ by
\[
f_1(b[-1]) =    (-1)^{\abs{b}}\pr_{\h^\vee[-1]\otimes \h}(\dadjoint b)=(-1)^{\abs{b}} (i^\ast \otimes\pr_{\h})(\dadjoint b).
\]
It follows from a direct verification that $f = f_0 + f_1$ yields a weak {\LP module} $ {\bb}[-1]\rightsquigarrow  \h$.
\end{Ex}
\begin{Rem}\label{Rem: homotopy question on two examples}
It is natural to ask when we choose two cohomologous cocycles $\alpha$ and $\alpha^\prime \in Z^2(\g)$ in Example \ref{Ex: pairing}, what is the relationship between the  resulting  weak {\LP modules} $f$ and $f^\prime$;
We also wish to see how the weak {\LP module} in Example~\ref{Ex: Lie pair} depends on the choice of splitting $j$ of Sequence~\eqref{SES of VS in Lie pair}. These questions are dealt with in Examples \ref{Ex: pairing continued} and \ref{Ex: Lie pair continued} of the subsequent section, where we construct a homotopy theory for weak {\LP modules}.
\end{Rem}

\begin{Def}\label{Def: morphisms of hets}
A \textbf{morphism} of weak {\LP modules} over $\g$ from $(f \colon M \rightsquigarrow \g)$ to $(g \colon N \rightsquigarrow \g)$ is {an $\infty$-morphism} of {\LF} $\infty$-$\g$-modules $\phi \colon M \rightsquigarrow N$ such that the following  triangle is commutative in the category $\lfMod_\g^\infty$:
\[
 \begin{tikzcd}
       M \ar[rr, rightsquigarrow, "\phi"]  \ar[dr, rightsquigarrow,swap, "f"] && N \ar[dl, rightsquigarrow, "g"] \\
       &  \g. &
  \end{tikzcd}
\]
\end{Def}
Compositions of morphisms of weak {\LP modules} over $\g$ are defined in an obvious manner.
With the descriptions provided in Definitions \ref{Def: wLP} and \ref{Def: morphisms of hets}, it follows that weak {\LP modules} over $\g$ are naturally gathered in a category which we denote by ${\weakLPmodg}$. In other words, it is the category consisting of ``adjoint-points" in the category $\lfMod_\g^\infty$ of locally finite $\infty$-$\g$-modules
\[
\weakLPmodg=\{\lfMod_\g^\infty(M,\g) \mid M\in \lfMod_\g^\infty\}.
\]
One can also consider the category $\mathbf{wLP} $ of
weak {\LP modules} without specifying the dg Lie algebra.
We defer the discussion of related content to Section \ref{Sec:Functoriality}.

Applying Proposition~\ref{prop: CE functor from infty modules}, we immediately obtain the following fact.
\begin{prop}\label{Prop:Cinducesequivalence}
The Chevalley-Eilenberg functor $\secondC(\g,-) \colon \lfMod_\g^{\infty} \to \Mod_{C(\g)}$ induces an equivalence of categories
\[
 \secondC(\g, -) \colon {\weakLPmodg} \xrightarrow{~\simeq~} \left(\fgsMod_{C(\g)}\right)_{ C(\g,\g)} .
\]
Here $\left(\fgsMod_{C(\g)} \right)_{ C(\g,\g)} $ stands for the category of finitely generated and  semifree   $C(\g)$-modules over $ C(\g,\g)$.
\end{prop}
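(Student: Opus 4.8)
The plan is to recognize Proposition~\ref{Prop:Cinducesequivalence} as the slice-category incarnation of the equivalence already furnished by Proposition~\ref{prop: CE functor from infty modules}, and then to invoke the general principle that an equivalence of categories restricts to an equivalence of over-categories. So the whole statement should reduce to two structural observations plus one elementary categorical lemma.

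First I would record the observations that make the statement a formality. On the one hand, the adjoint module $\g$ is an ordinary $\g$-module, hence a locally finite $\infty$-$\g$-module, and its image under $\secondC(\g,-)$ is precisely the total Chevalley-Eilenberg module $(C(\g,\g), d_{\tot}^{\g})$ with differential~\eqref{Eqt:dtotg}, which is a finitely generated semifree $C(\g)$-module; thus $\secondC(\g,\g) = C(\g,\g)$. On the other hand, by the very definition recalled just above the statement, $\weakLPmodg$ is the over-category $\lfMod_\g^\infty/\g$ whose objects are $\infty$-morphisms $f\colon M\rightsquigarrow \g$ and whose morphisms are the commuting triangles in $\lfMod_\g^\infty$ appearing in Definition~\ref{Def: morphisms of hets}, while $\left(\fgsMod_{C(\g)}\right)_{C(\g,\g)}$ is the over-category $\fgsMod_{C(\g)}/C(\g,\g)$.

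Next I would invoke the elementary lemma: if $F\colon \mathcal{A}\xrightarrow{\simeq}\mathcal{B}$ is an equivalence and $a\in\mathcal{A}$, then post-composition with $F$ defines an equivalence $\mathcal{A}/a\xrightarrow{\simeq}\mathcal{B}/F(a)$, sending $(X\xrightarrow{u}a)$ to $(F(X)\xrightarrow{F(u)}F(a))$. Its full faithfulness is immediate from that of $F$, since a morphism of slice objects is exactly a morphism of $\mathcal{A}$ making a triangle commute, and $F$ both preserves and reflects such commutativity; essential surjectivity follows because any object $(\mathcal{M}\xrightarrow{g}F(a))$ of $\mathcal{B}/F(a)$ has $\mathcal{M}\cong F(M)$ for some $M$ by essential surjectivity of $F$, and then $g$ transports through fullness of $F$ to an arrow $M\to a$ in $\mathcal{A}$ realizing it up to isomorphism.

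Finally, specializing to $F=\secondC(\g,-)\colon \lfMod_\g^\infty\xrightarrow{\simeq}\fgsMod_{C(\g)}$ (Proposition~\ref{prop: CE functor from infty modules}) and $a=\g$, and using $\secondC(\g,\g)=C(\g,\g)$, yields the asserted equivalence $\weakLPmodg\xrightarrow{\simeq}\left(\fgsMod_{C(\g)}\right)_{C(\g,\g)}$. I do not anticipate any genuine obstacle here; the only point deserving explicit mention is that a morphism of weak \LP modules is literally a triangle in $\lfMod_\g^\infty$ over $\g$, so that the induced functor is the restriction of the fully faithful $\secondC(\g,-)$ and every verification collapses to the full faithfulness and essential surjectivity already established.
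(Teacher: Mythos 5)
Your proposal is correct and takes essentially the same route as the paper: the paper deduces Proposition~\ref{Prop:Cinducesequivalence} immediately from the equivalence $\secondC(\g,-)\colon \lfMod_\g^{\infty} \xrightarrow{~\simeq~} \fgsMod_{C(\g)}$ of Proposition~\ref{prop: CE functor from infty modules}, restricted to the over-categories of $\g$ and of $\secondC(\g,\g)=C(\g,\g)$, which is precisely your argument. The only difference is that you make explicit the standard lemma that an equivalence of categories induces an equivalence of slice categories, a step the paper leaves implicit in the phrase ``we immediately obtain.''
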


\subsection{Weak {\LP modules} in terms of dg geometry}\label{Sec:dgpoint}
From the dg geometry point of view, a dg Lie algebra $\g$ is a (formal) pointed dg manifold $B\g := (\g[1], d_{\CE})$, whose space of functions $C^\infty(B\g)$ is naturally identified with the Chevalley-Eilenberg dg algebra $C(\g)$.
Now we give an equivalent characterization of weak {\LP modules} over $\g$ (via formal geometry) in terms of the dg manifold $B\g$.

Denote the category of dg vector bundles over the dg manifold $B\g$ by $\mathbf{dgVec}_{B\g}$. According to Mehta, Sti\'{e}non, and Xu~\cite{MSX}*{Lemma 1.5}, this category is equivalent to the category $\fgsMod_{C(\g)}$ of finitely generated semifree   $C(\g)$-modules.
Consider the essentially surjective and fully faithful functor obtained by taking global sections
\[
 \Gamma \colon \mathbf{dgVec}_{B\g} \xrightarrow{\simeq}  \fgsMod_{C(\g)},
\]
i.e., given a dg vector bundle $\mathcal{E} \to B\g$, its space $\Gamma(\mathcal{E})$ of global sections is a semifree  $C(\g)$-module.
Meanwhile, by Proposition~\ref{prop: CE functor from infty modules}, $\secondC(\g,-) \colon \lfMod_\g^{\infty} \xrightarrow{\simeq} \fgsMod_{C(\g)}$ is an equivalence of categories. Therefore, we have an equivalence of categories
\[
   \lfMod_\g^{\infty} \simeq \mathbf{dgVec}_{B\g}.
\]
Thus, given a {\LF} $\infty$-$\g$-module $M$, there exists a dg vector bundle $\mathcal{E} \to B\g$, whose space of global sections is naturally identified with the semifree   $C(\g)$-module $\secondC(\g,M)$.
Moreover, since the global section space $\Gamma(\TminusoneBg )$ of the shifted tangent bundle $\TminusoneBg $ $\to$ $B\g$ can be naturally identified with the  $C(\g)$-module $C(\g,\g)$, combining with Proposition~\ref{Prop:Cinducesequivalence}, we immediately obtain the following fact.
\begin{prop}\label{prop-def via dg bundle}
  Let $\g$ be a dg Lie algebra. There is a one-to-one correspondence between weak {\LP modules}  over $\g$ and morphisms in the category $\mathbf{dgVec}_{B\g}$  targeting at the shifted tangent bundle $\TminusoneBg $.
\end{prop}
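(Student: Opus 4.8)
The plan is to prove the statement purely formally, by composing the equivalence of Proposition~\ref{Prop:Cinducesequivalence} with the global-section equivalence of Mehta--Sti\'{e}non--Xu, once the image of the adjoint module has been pinned down on both sides; the point is that essentially all the analytic work is already in place, and what remains is to transport a single over-category across one more equivalence. First I would unwind the definitions: an object of $\weakLPmodg$ is by definition an $\infty$-morphism $f\colon M \rightsquigarrow \g$ from a {\LF} $\infty$-$\g$-module $M$ into the adjoint module $\g$, that is, an ``adjoint-point'' in $\lfMod_\g^\infty$. Proposition~\ref{Prop:Cinducesequivalence} already supplies the equivalence
\[
\secondC(\g,-)\colon \weakLPmodg \xrightarrow{~\simeq~} \left(\fgsMod_{C(\g)}\right)_{C(\g,\g)},
\]
sending $(f\colon M \rightsquigarrow \g)$ to the $C(\g,\g)$-point $\secondC(\g,M)\to C(\g,\g)$. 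So it remains only to replace the target $(\fgsMod_{C(\g)})_{C(\g,\g)}$ by an over-category of dg vector bundles.

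Next I would invoke the global-section functor $\Gamma\colon \mathbf{dgVec}_{B\g}\xrightarrow{~\simeq~}\fgsMod_{C(\g)}$, an equivalence by~\cite{MSX}*{Lemma 1.5}, together with the identification $\Gamma(\TminusoneBg)\cong C(\g,\g)$ of $C(\g)$-modules recalled just before the statement. The key categorical observation is that a fully faithful, essentially surjective functor carrying the distinguished object $\TminusoneBg$ to $C(\g,\g)$ induces an equivalence of the associated over-categories: full faithfulness gives, for each bundle $\mathcal{E}$, a bijection
\[
\mathbf{dgVec}_{B\g}(\mathcal{E},\TminusoneBg)\;\cong\;\fgsMod_{C(\g)}\bigl(\Gamma(\mathcal{E}),C(\g,\g)\bigr),
\]
compatible with composition, so that morphisms targeting $\TminusoneBg$ correspond bijectively, and naturally, to $C(\g,\g)$-points. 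Composing the resulting equivalence $(\mathbf{dgVec}_{B\g})_{\TminusoneBg}\simeq(\fgsMod_{C(\g)})_{C(\g,\g)}$ with Proposition~\ref{Prop:Cinducesequivalence} would then yield an equivalence $\weakLPmodg\simeq(\mathbf{dgVec}_{B\g})_{\TminusoneBg}$, whose underlying bijection of objects is exactly the asserted one-to-one correspondence between weak {\LP modules} over $\g$ and morphisms in $\mathbf{dgVec}_{B\g}$ targeting $\TminusoneBg$.

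The main obstacle I anticipate lies not in this categorical bookkeeping but in securing the identification $\Gamma(\TminusoneBg)\cong C(\g,\g)$ as $C(\g)$-modules -- equivalently, the assertion that the adjoint module $\g$ is carried, under $\secondC(\g,-)$ and then a quasi-inverse of $\Gamma$, precisely to the shifted tangent bundle $\TminusoneBg$. This is what guarantees that the two over-categories share one and the same base object, so that the correspondence lands on $\TminusoneBg$ and nothing else. I would verify it by matching, on $\Gamma(\TminusoneBg)$ and on $C(\g,\g)=(C(\g)\otimes\g,\,d_{\tot}^{\g})$, both the $C(\g)$-module structure and the differential, checking in particular that the weight-one term of $d_{\tot}^{\g}$ in Equation~\eqref{Eqt:dtotg}, the one coming from the adjoint action, matches the canonical differential on the global sections of the shifted tangent bundle of $B\g$.
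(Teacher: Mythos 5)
Your proposal is correct and takes essentially the same route as the paper: the paper likewise combines Proposition~\ref{Prop:Cinducesequivalence} with the Mehta--Sti\'{e}non--Xu global-sections equivalence $\Gamma\colon \mathbf{dgVec}_{B\g}\xrightarrow{\ \simeq\ }\fgsMod_{C(\g)}$ and the identification $\Gamma(\TminusoneBg)\cong C(\g,\g)$, and then declares the correspondence immediate. The only difference is one of emphasis: the paper treats the identification $\Gamma(\TminusoneBg)\cong C(\g,\g)$ as a standard fact rather than a step requiring verification, while you (reasonably) single it out as the point to check.
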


Dually, we have a characterization of weak {\LP modules} over $\g$ by the cotangent complex of   $B\g$.
\begin{Def}
 Let $\mathcal{U}\in \Mod_{C(\g)}$ be a     $C(\g)$-module. A $\mathcal{U}$-valued dg derivation of $C(\g)$ is a $\k$-linear cochain map $\delta:  C(\g) \rightarrow \mathcal{U}$ satisfying the Leibniz rule
 \[
  \delta(\xi \odot \eta) = (-1)^{\abs{\xi}\abs{\eta}}\eta \odot \delta(\xi) + \xi \odot \delta(\eta),\;\;\forall~ \xi,\eta \in C(\g).
 \]
\end{Def}
\begin{Rem}
The trivial square zero extension of $C(\g)$ by $\mathcal{U}$ is an augmented $C(\g)$-algebra fitting into  the following natural morphisms:
  \begin{align*}
   C(\g) \xrightarrow{i} &C(\g) \oplus \mathcal{U} \xrightarrow{\pr_1} C(\g).
  \end{align*}
Each section of the projection $\pr_1: C(\g) \oplus \mathcal{U} \to C(\g)$ gives rise to a $\mathcal{U}$-valued dg derivation of $C(\g)$, and vice versa.
\end{Rem}
Let $\Der(C(\g),\mathcal{U})$ be the set of $\mathcal{U}$-valued dg derivations of $C(\g)$, and it admits an obvious   $C(\g)$-module structure.  Meanwhile, each morphism $\Phi\colon \mathcal{U} \to \mathcal{M}$ of   $C(\g)$-modules induces a morphism $\Phi \circ - \colon \Der (C(\g),\mathcal{U}) \to \Der(C(\g),\mathcal{M})$ of dg derivations of $C(\g)$ defined by compositions. Hence, we obtain a functor $\Der(C(\g),-)$ from the category  $\Mod_{C(\g)}$ of   $C(\g)$-modules to itself.
It is well-known that this functor is corepresented by the  $C(\g)$-module $\Omega_{C(\g)\mid\k}$ of K\"{a}hler differentials in the following sense:
 \[
 \Der(C(\g),-) \cong \Mod_{C(\g)}(\Omega_{C(\g)\mid\k},-) \colon ~ \Mod_{C(\g)} \to \Mod_{C(\g)}.
 \]
For this reason, $\Omega_{C(\g)\mid\k}$ is also called the cotangent complex of the dg manifold $B\g$.
	
Given a {\LF} $\infty$-$\g$-module $M$, let $ M^{\vee}[-1]\in \lfMod_\g^{\infty} $ be the dual object with shifted degrees. There exists a one-to-one correspondence between weak {\LP modules} $f \colon M \rightsquigarrow \g$ and $\secondC(\g,M^{\vee}[-1])$-valued dg derivations $\delta$ of $C(\g)$.
In fact, if $f = \sum_k f_k$, where $f_k \colon M \to C^k(\g,\g)$, is a weak Loday-Pirashvili module over $\g$, then its linear dual map $f^{\vee} = \sum_k (f_k^{\vee} \colon \g^{\vee}[-1] \to C^k(\g, M^{\vee}[-1]))$ determines a morphism $F^{\vee} \colon \Omega_{C(\g)\mid\k} \to C(\g, M^{\vee}[-1])$ of   $C(\g)$-modules, and gives rise to a $C(\g,M^{\vee}[-1])$-valued dg derivation $\delta$ of $C(\g)$.
As a consequence, we obtain the following characterization of weak {\LP modules} over $\g$.
\begin{prop}\label{prop: fin het and der}
The category ${\weakLPmodg}$ of  weak {\LP modules} over $\g$ is equivalent to the category $\Der(C(\g),\fgsMod_{C(\g)})$   of dg derivations of $C(\g)$ valued in finitely generated and semifree dg $C(\g)$-modules.
\end{prop}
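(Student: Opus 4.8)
The plan is to assemble the desired equivalence out of the three facts already at our disposal: the equivalence $\secondC(\g,-)\colon \weakLPmodg \xrightarrow{\simeq}\left(\fgsMod_{C(\g)}\right)_{C(\g,\g)}$ of Proposition~\ref{Prop:Cinducesequivalence}, the fiberwise duality $D:=(-)^\vee[-1]$ on $\fgsMod_{C(\g)}$, and the corepresentability of $\Der(C(\g),-)$ by the cotangent complex $\Omega_{C(\g)\mid\k}$. The object-level correspondence has in effect already been produced in the paragraph preceding the statement: a weak {\LP module} $f\colon M\rightsquigarrow\g$ yields $F=\secondC(\g,f)\colon \secondC(\g,M)\to C(\g,\g)$, whose dual $F^\vee\colon \Omega_{C(\g)\mid\k}\to \secondC(\g,M^\vee[-1])$ is, under corepresentability, precisely a $\secondC(\g,M^\vee[-1])$-valued dg derivation $\delta_f$ of $C(\g)$. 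What remains is to promote this bijection to an equivalence of categories.

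First I would record the two structural inputs. On the one hand, $D=(-)^\vee[-1]$ is a contravariant self-equivalence of $\fgsMod_{C(\g)}$ (it is the transport through $\secondC(\g,-)$ of the fiberwise dual-and-shift autoequivalence $M\mapsto M^\vee[-1]$ of $\lfMod_\g^\infty$), and it carries the tangent module $C(\g,\g)=\Gamma(\TminusoneBg)$ to $\secondC(\g,\g^\vee[-1])$. On the other hand, the free graded-commutative structure $C(\g)=S(\g^\vee[-1])$ identifies the Kähler differentials as $\Omega_{C(\g)\mid\k}\cong \secondC(\g,\g^\vee[-1])$, with the differential induced by $d_{\CE}$ matching the total differential of the $\infty$-$\g$-module $\g^\vee[-1]$; hence there is a canonical isomorphism $D(C(\g,\g))\cong \Omega_{C(\g)\mid\k}$ of dg $C(\g)$-modules. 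Second, the natural isomorphism $\Der(C(\g),\mathcal{U})\cong \Mod_{C(\g)}(\Omega_{C(\g)\mid\k},\mathcal{U})$ identifies $\Der(C(\g),\fgsMod_{C(\g)})$ --- whose objects are pairs $(\mathcal{U},\delta)$ and whose morphisms are module maps intertwining the derivations --- with the coslice category $\Omega_{C(\g)\mid\k}\downarrow \fgsMod_{C(\g)}$ of objects under $\Omega_{C(\g)\mid\k}$.

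With these in place the proof is formal. The contravariant equivalence $D$ sends a morphism $\secondC(\g,M)\to C(\g,\g)$ lying over $C(\g,\g)$ to the morphism $\Omega_{C(\g)\mid\k}\cong D(C(\g,\g))\to D(\secondC(\g,M))=\secondC(\g,M^\vee[-1])$ lying under $\Omega_{C(\g)\mid\k}$, and thereby exchanges the slice $\left(\fgsMod_{C(\g)}\right)_{C(\g,\g)}$ with the coslice $\Omega_{C(\g)\mid\k}\downarrow \fgsMod_{C(\g)}$. Precomposing with $\secondC(\g,-)$ from Proposition~\ref{Prop:Cinducesequivalence} and postcomposing with the corepresentability identification then yields the asserted comparison between $\weakLPmodg$ and $\Der(C(\g),\fgsMod_{C(\g)})$, which on objects is exactly $f\mapsto(\secondC(\g,M^\vee[-1]),\delta_f)$.

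The part that genuinely needs care, rather than being pure diagram-chasing, is twofold. The first is compatibility with differentials: one must check that $\secondC(\g,M^\vee[-1])$ carries the total differential dual to $d_{\tot}^M$ (so that the target lies in $\fgsMod_{C(\g)}$ and $F^\vee$ is a \emph{cochain} derivation, not merely a graded one), and that under this the identification $\Omega_{C(\g)\mid\k}\cong \secondC(\g,\g^\vee[-1])$ is an isomorphism of dg modules; this uses that the adjoint action map $d_1^\g$ dualizes to the bracket term $d_1$ inside $d_{\CE}$. The second is the bookkeeping of variance: since linear duality reverses the direction of morphisms, the comparison is most honestly a duality $\weakLPmodg^{op}\simeq \Der(C(\g),\fgsMod_{C(\g)})$, so one must fix the convention for morphisms in $\Der(C(\g),\fgsMod_{C(\g)})$ (as derivation-intertwining module maps) so that the two categories are matched in the intended sense.
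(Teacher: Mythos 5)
Your proposal is correct and takes essentially the same route as the paper: the paper's entire proof is the paragraph preceding the statement, which dualizes $f$ componentwise into a $C(\g)$-module map $F^{\vee}\colon \Omega_{C(\g)\mid\k}\to \secondC(\g,M^{\vee}[-1])$ and then invokes the corepresentability of $\Der(C(\g),-)$ by the K\"{a}hler differentials --- your slice/coslice packaging through Proposition~\ref{Prop:Cinducesequivalence} is a formal restatement of that same construction, made explicitly functorial. Your closing caveat about variance is well taken: since duality reverses arrows, the natural statement is $\weakLPmodg^{\mathrm{op}}\simeq\Der(C(\g),\fgsMod_{C(\g)})$ (equivalently, one must orient the morphisms of the derivation category oppositely to the derivation-intertwining convention), a point on which the paper --- which never spells out the morphisms of $\Der(C(\g),\fgsMod_{C(\g)})$ at the category level --- is silent.
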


\subsection{A homotopy theory for weak {\LP modules}}\label{Section: homotopy theory}

Since weak {\LP modules} are defined to be ``adjoint-points" in the category $\lfMod_\g^\infty$,
they inherit a homotopy equivalence relation as well.
\begin{Def}\label{Def: homotopy of hets}
  Two weak {\LP modules} $f, f^\prime \colon M \rightsquigarrow \g$ over a dg Lie algebra $\g$ are said to be \textbf{homotopic} if they are homotopic as $\infty$-morphisms in the category $\lfMod_\g^\infty$, i.e., there exists {an $\infty$-morphism} $H \colon M \rightsquigarrow J(\g)$ which we call  a homotopy from $f$ to $f^\prime$, such that the following diagram in   $\lfMod_\g^{\infty}$ commutes:
  \[
  \begin{tikzcd}
         & M \ar[rightsquigarrow]{ld}[left]{\overset{\mathlarger{f}}{\overset{~ \overset{~ }{~ }}{~\overset{~ }{~ } }}} \ar[d, rightsquigarrow, "H"] \ar[dr, rightsquigarrow, "f^\prime"] & \\
  \g   & J(\g) \ar{l}{\epsilon_0} \ar{r}[swap]{\epsilon_1} & \g.
  \end{tikzcd}
  \]
\end{Def}

For each weak {\LP module} $(f \colon M \rightsquigarrow \g)$ $\in{\weakLPmodg}$, we will denote its homotopy class by $[f  \colon M \rightsquigarrow \g]$, or simply $[f]$.
Morphisms between homotopy classes of weak {\LP modules} are defined in a natural fashion:
Given two weak {\LP module} classes $[f]=[f\colon M \rightsquigarrow \g]$ and $[g]=[g  \colon N \rightsquigarrow \g]$, a morphism $\phi$ from $[f]$ to $[g]$ is {an $\infty$-morphism} $\phi \colon M \rightsquigarrow N$ such that the triangle in Definition~\ref{Def: morphisms of hets} commutes in the homotopy category $\Ho(\lfMod_\g^\infty) (\simeq ~\lfMod_\g^\infty{/_\sim})$, i.e., $[f] = [g \circ \phi]$.
So we can  form the category $\Ho(\weakLPmodg)$ of  \textit{homotopy classes} of weak {\LP modules} over $\g$.

Similarly, we can define the homotopy category
$\Ho(\Der(C(\g),\fgsMod_{C(\g)}))$ whose objects are \textit{homotopy classes} of dg derivations of $C(\g)$ valued in finitely generated and semifree   $C(\g)$-modules.  From Propositions~\ref{prop: fin het and der} and~\ref{prop: homotopy invariance of CE functor}, we get an important fact.

\begin{prop}\label{prop: equivalence of weak ad and derivations}
  There is an equivalence of categories
  \[
  \Ho(\weakLPmodg) \simeq \Ho(\Der(C(\g),\fgsMod_{C(\g)})) .
   \]
  \end{prop}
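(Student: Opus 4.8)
The plan is to promote the strict equivalence of Proposition~\ref{prop: fin het and der} to the level of homotopy categories. Write $E\colon \weakLPmodg \xrightarrow{\simeq} \Der(C(\g),\fgsMod_{C(\g)})$ for that equivalence; it is built from the Chevalley-Eilenberg functor $\secondC(\g,-)$ together with the finite-dimensional dualization $M \mapsto M^{\vee}[-1]$, the latter serving only to re-encode the image $C(\g)$-module data as derivations. Since each of $\Ho(\weakLPmodg)$ and $\Ho(\Der(C(\g),\fgsMod_{C(\g)}))$ is the quotient of its underlying category by the homotopy relation, it suffices to prove that $E$ both \emph{preserves} and \emph{reflects} homotopy. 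An equivalence with these two properties descends to an equivalence $\overline{E}$ of homotopy categories: essential surjectivity and fullness of $\overline{E}$ are inherited from $E$, while faithfulness is precisely the reflection property.

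For preservation I would invoke Proposition~\ref{prop: homotopy invariance of CE functor}, by which $\secondC(\g,-)$ sends homotopic $\infty$-morphisms in $\lfMod_\g^\infty$ to homotopic morphisms of $C(\g)$-modules. As the homotopy of weak {\LP modules} is by definition their homotopy as $\infty$-morphisms in $\lfMod_\g^\infty$ (Definition~\ref{Def: homotopy of hets}), and the dualization of Proposition~\ref{prop: fin het and der} only relabels the output, $E$ carries homotopic weak {\LP modules} to homotopic derivations, so $\overline{E}$ is well defined.

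The crux is reflection. Suppose $E(f) \simeq E(f')$ for weak {\LP modules} $f,f'\colon M \rightsquigarrow \g$. Transporting through the equivalence, this is a homotopy between $\secondC(\g,f)$ and $\secondC(\g,f')$ in the model category $\Mod_{C(\g)}$, presented by a map into some path object of $\secondC(\g,\g)$. I would first normalize the path object to $\secondC(\g,J(\g))$, which is legitimate because every object of $\lfMod_\g^\infty$ is fibrant and cofibrant (Theorem~\ref{Thm: Repinftyg is fibrant}), so the homotopy relation is an equivalence relation independent of the chosen path object, and because $\secondC(\g,J(\g))$ is indeed a path object for $\secondC(\g,\g)$ (the remark preceding Proposition~\ref{prop: homotopy invariance of CE functor}). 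Full faithfulness of $\secondC(\g,-)$ (Proposition~\ref{prop: CE functor from infty modules}) then realizes the normalized homotopy as $\secondC(\g,H)$ for a unique $\infty$-morphism $H\colon M \rightsquigarrow J(\g)$, and faithfulness forces its faces $\epsilon_0 H, \epsilon_1 H$ to equal $f, f'$; by Definition~\ref{Def: homotopy of hets} this gives $f \simeq f'$. Running the identical pullback on morphisms $\phi,\psi$ of weak {\LP modules} yields the faithfulness of $\overline{E}$.

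The main obstacle is this reflection step, and within it the identification of the genuine model-categorical homotopy on $\Mod_{C(\g)}$ with the path-object homotopy of weak {\LP modules}. The delicate ingredient is path-object independence, which is exactly where the fibrant-cofibrant conclusion of Theorem~\ref{Thm: Repinftyg is fibrant} is needed: without it one could not normalize an arbitrary $\Mod_{C(\g)}$-homotopy to one running through $\secondC(\g,J(\g))$ before pulling it back along the fully faithful functor $\secondC(\g,-)$. Once the homotopy runs through $\secondC(\g,J(\g))$, its compatibility with the structural maps to $\g$ is automatic, since all diagrams involved live over $\secondC(\g,\g)$ and are reflected by the equivalence.
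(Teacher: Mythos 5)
Your global strategy --- descend the strict equivalence of Proposition~\ref{prop: fin het and der} to homotopy categories by showing that it preserves and reflects the homotopy relation --- is the same as the paper's, which deduces the statement directly from Propositions~\ref{prop: fin het and der} and~\ref{prop: homotopy invariance of CE functor}, and your preservation step is fine. The genuine gap is the normalization step inside your reflection argument. Independence of right homotopy from the choice of path object is a statement internal to a fixed model category and requires the \emph{source} to be cofibrant \emph{there}; since your homotopy lives in $\Mod_{C(\g)}$, what you need is that $\secondC(\g,M)$ is cofibrant in the model category $\Mod_{C(\g)}$. Theorem~\ref{Thm: Repinftyg is fibrant} asserts fibrancy and cofibrancy in the almost model structure on $\lfMod_\g^\infty$, whose (co)fibrations are defined through the leading components $\phi_0$; it says nothing about cofibrancy in $\Mod_{C(\g)}$. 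Worse, the needed cofibrancy is false in general: take $\g=\k$ abelian in degree $0$ with zero differential and $M=\k$ the nontrivial one-dimensional $\g$-module, so that $C(\g)=\k\oplus\k x$ with $\abs{x}=1$, $x^2=0$, $d_{\CE}=0$, and $d_{\tot}^M(m)=x\otimes m$ on $\secondC(\g,M)$. This dg $C(\g)$-module is acyclic, yet every $C(\g)$-linear degree $-1$ map $h$ satisfies $h(m)\in(\secondC(\g,M))^{-1}=0$ and $h(x\otimes m)=\pm x\,h(m)=0$, so $d_{\tot}^M h+h d_{\tot}^M=0\neq\id$; a cofibrant acyclic object of $\Mod_{C(\g)}$ (where all objects are fibrant) would have identity $C(\g)$-linearly null-homotopic. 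Hence objects of $\fgsMod_{C(\g)}$ need not be cofibrant in $\Mod_{C(\g)}$, and an arbitrary $\Mod_{C(\g)}$-homotopy cannot be rerouted through $\secondC(\g,J(\g))$ by the argument you give.

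The repair is to delete the model-categorical detour, which is what the paper implicitly does. The homotopy relation used on the module/derivation side is the concrete one: two morphisms into $\secondC(\g,\g)$ are homotopic iff their difference equals $d_{\tot}^{\g}\circ h+h\circ d_{\tot}^{M}$ for some $C(\g)$-linear $h$, equivalently iff they admit a right homotopy through the specific path object $\secondC(\g,J(\g))$; this is the notion appearing in Proposition~\ref{prop: homotopy invariance of CE functor} and in the explicit characterization of homotopies of weak \LP modules at the end of Section~\ref{Section: homotopy theory}, and it transports across the dualization $M\mapsto M^{\vee}[-1]$ of Proposition~\ref{prop: fin het and der} because $M$ is finite dimensional. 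With this reading there is nothing to normalize: the homotopy is already a map into $\secondC(\g,J(\g))$ (equivalently, the operator $h$ dualizes back), and your own mechanism --- full faithfulness of $\secondC(\g,-)$ (Proposition~\ref{prop: CE functor from infty modules}) producing an $\infty$-morphism $H\colon M\rightsquigarrow J(\g)$ whose faces are forced to equal $f$ and $f^\prime$ --- completes reflection exactly as in Definition~\ref{Def: homotopy of hets}; the same remark disposes of faithfulness on morphisms. Once that substitution is made, your proof is correct and agrees with the paper's intended argument.
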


At the end of this section, we unfold  Definition~\ref{Def: homotopy of hets} to describe explicitly homotopy equivalence of weak {\LP modules}.
\begin{prop}
  Two weak {\LP modules} $f, f^\prime \colon M \rightsquigarrow \g$ are homotopic if and only if there exists a sequence of  linear maps
  \[
  h_n \colon M \to C^{n}(\g, \g)[-1], \quad n = 0,1,2,\cdots,
  \]
  which is locally finite (similar to (1) of Definition \ref{Def: wLP}) and
  such that the associated $C(\g)$-linear extensions $F$ of $f$ and $F^\prime$ of $f^\prime$  satisfy
  \[
    F - F^\prime =  d_{\tot}^\g \circ h + h \circ d_{\tot}^M \colon~~~ C(\g,M) \to C(\g,\g),
  \]
  where $h = \sum_n h_n \colon C(\g,M) \to C(\g,\g)[-1]$ is $C(\g)$-linear.
We call the sequence of maps $\{h_n\}$ an $\infty$-homotopy between maps $f$ and $f^\prime$.
\end{prop}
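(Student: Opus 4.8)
The plan is to unravel Definition~\ref{Def: homotopy of hets} using the explicit path object $J(\g) = J \otimes \g$ of Proposition~\ref{prop: path object for Repginfty}. Since $C^k(\g, J(\g)) = J \otimes C^k(\g,\g)$, every $\infty$-morphism $H \colon M \rightsquigarrow J(\g)$ splits componentwise as
\[
H_k(m) = A_k(m) + t\, B_k(m) + (dt)\, h_k(m),
\]
with $A_k(m), B_k(m) \in C^k(\g,\g)$ and $h_k(m) \in C^k(\g,\g)[-1]$, the shift on $h_k$ being forced by $\abs{dt}=1$. Writing $F_A, F_B, h$ for the $C(\g)$-linear extensions of $\sum_k A_k$, $\sum_k B_k$, $\sum_k h_k$, the conditions $\epsilon_0\circ H = f$ and $\epsilon_1 \circ H = f'$ --- read off from the formula $(\epsilon_0,\epsilon_1)(m_0,m_1,u) = (m_0,m_0+m_1)$ in Proposition~\ref{prop: path object for Repginfty} --- force $F_A = F$ and $F_A + F_B = F'$, hence $F_B = F' - F$. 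Thus a homotopy is completely encoded by its $dt$-component $h=\sum_n h_n$ together with the endpoints $f, f'$.

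First I would treat the forward implication. From the structure maps of $J(\g)$ recorded just before Proposition~\ref{prop: path object for Repginfty} (in particular the Sullivan twist coming from $t \mapsto dt$), the total differential acts by
\[
d_{\tot}^{J(\g)}\bigl(\alpha + t\beta + (dt)\gamma\bigr) = d_{\tot}^\g \alpha + t\, d_{\tot}^\g \beta + (dt)\bigl(\beta - d_{\tot}^\g\gamma\bigr).
\]
Imposing the intertwining relation $H\circ d_{\tot}^M = d_{\tot}^{J(\g)}\circ H$ and comparing the three $J$-components, the coefficients of $1$ and of $t$ merely restate that $F$ and $F'$ are weak \LP modules, whereas the coefficient of $dt$ yields precisely the chain-homotopy identity
\[
F - F' = d_{\tot}^\g \circ h + h \circ d_{\tot}^M.
\]
Local finiteness of $\{h_n\}$ is inherited from that of $H$.

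Conversely, given a locally finite sequence $\{h_n\}$ satisfying this identity, I would define $H$ by $H_k(m) = f_k(m) + t\,(f'_k - f_k)(m) + (dt)\,h_k(m)$ and check that it is a genuine $\infty$-morphism $M \rightsquigarrow J(\g)$. Running the coefficient computation in reverse, the $1$- and $t$-components of the intertwining relation hold because $f$ and $f'$ are weak \LP modules, while the $dt$-component holds exactly by hypothesis; local finiteness of $H$ follows from that of $f, f'$ and $\{h_n\}$. Since $\epsilon_0\circ H = f$ and $\epsilon_1\circ H = f'$ by construction, $H$ is the desired homotopy, and $\{h_n\}$ is the advertised $\infty$-homotopy.

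The step I expect to be most delicate is purely the Koszul-sign bookkeeping in $d_{\tot}^{J(\g)}$: one must pin down the sign with which the odd total differential passes the odd generator $dt$, as well as the sign of the twist term produced by $t \mapsto dt$, and reconcile these with the listed structure maps $d_k^{J(\g)}$. These conventions fix both the overall sign and the orientation ($F - F'$ rather than $F' - F$) of the resulting identity; once they are settled, the remainder of the argument is a routine extraction of the $1$-, $t$-, and $dt$-components.
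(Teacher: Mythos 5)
Your proposal is correct and follows exactly the paper's own argument: decompose the right homotopy $H \colon M \rightsquigarrow J(\g)$ into its $1$-, $t$-, and $dt$-components, use $\epsilon_0 \circ H = f$ and $\epsilon_1 \circ H = f^\prime$ to identify the first two components with $F$ and $F^\prime - F$, read the chain-homotopy identity off the $dt$-component of the intertwining relation $H \circ d_{\tot}^M = d_{\tot}^{J(\g)} \circ H$, and obtain the converse by reversing the construction (which the paper omits as ``similar''). The one point you rightly flag as delicate is in fact where you are more careful than the paper: your uniform formula $d_{\tot}^{J(\g)}\bigl(\alpha + t\beta + (dt)\gamma\bigr) = d_{\tot}^{\g}\alpha + t\, d_{\tot}^{\g}\beta + (dt)\bigl(\beta - d_{\tot}^{\g}\gamma\bigr)$ is the Koszul-consistent one that squares to zero and yields the stated identity (after replacing $h$ by $-h$, which is harmless), whereas the paper's listed structure maps $d_k^{J(M)}$ put a plus sign on the weight $\geqslant 1$ terms of the $dt$-component and the paper never confronts this, since it asserts the final constraint without carrying out the computation.
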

\begin{proof}
   Note that the path object for the adjoint module of $\g$, namely
   \[
   J(\g) = (\g \oplus \g t \oplus \g[-1] dt, d_{\tot}^{J(\g)} = d_0^{J(\g)} + d_1^{J(\g)}),
   \]
   is defined by
  \begin{align*}
    d_0^{J(\g)}(x+ty+(dt)z) &= d_0^\g(x) + td_0^\g(y)  + dt(y-d_0^\g(z)), \\
    d_1^{J(\g)}(x + ty+(dt)z) &= d_1^\g(x) + td_1^\g(y)  + dt(d_1^\g(z)),
  \end{align*}
  for all $x+ty+(dt)z \in \g \oplus \g t \oplus \g[-1] dt$.

  Assume that $f$ and $f^\prime$ are homotopic via the right homotopy $H \colon M \rightsquigarrow J(\g)$. We show how to find the maps $h_n$.

  By definition, $H$ is a $C(\g)$-linear map
  \[
   H= \sum_n H_n \colon  M \to \oplus_n C^n(\g, J(\g)),
  \]
  satisfying
  \[
    H \circ d_{\tot}^M = d_{\tot}^{J(\g)} \circ H \colon C(\g,M) \to C(\g, J(\g)).
  \] Since $J(\g) = \g \oplus \g t + \g[-1] dt$, we may write $H_n = (H_n^{(1)}, H_n^{(2)}, H_n^{(3)})$, where
  \begin{align*}
    H_n^{(1)} &\colon  M \to C^n(\g,\g), \\
    H_n^{(2)} &\colon  M \to C^n(\g,\g)t, \\
  \mbox{and}\quad  H_n^{(3)} &\colon M \to C^n(\g,\g)[-1]dt.
  \end{align*}
  Since $\epsilon_0 \circ H = f$ and $\epsilon_1 \circ H = f^\prime$, it follows that
  \begin{align*}
    H_n^{(1)} = f, &\quad \mbox{and} \quad H_n^{(2)}  = f^\prime - f.
  \end{align*}
  Accordingly, we can define a sequence of multilinear maps $h_n \colon M \to C^n(\g,\g)[-1]$  (for all $n \geqslant 0$)  by
  \[
  H_n^{(3)}(m) = h_n(m)dt.
  \]
  The resulting $\{h_n\}_{n\geqslant 0}$ is subject to the constraint stated by the proposition.   The converse argument is  achieved similarly, and thus omitted.
\end{proof}

\begin{Ex}\label{Ex: pairing continued}
By Example~\ref{Ex: pairing}, each $2$-cocycle $\alpha \in Z^2(\g)$ of degree $k$ defines a weak {\LP module} $f_1 \colon M = \g[1-k] \otimes \g \rightsquigarrow \g$. We claim that cohomologous $2$-cocycles of degree $k$, say $\alpha$ and $\alpha^\prime \in Z^2(\g)$,  induce homotopic weak {\LP modules}.
	
In fact, by assumption   there exists $\eta \in C(\g)$ of total degree $k-1$ such that $\alpha-\alpha^\prime= d_{\CE}\eta$. Now, the weak {\LP module} arising from $\alpha^\prime$ is given by
\begin{align*}
	f_1^\prime  &\colon  \g[1-k] \otimes \g \to C(\g)\otimes \g , &
	f_1^\prime ( y[1-k] \otimes z) &:= \iota_{y[1]}\alpha^\prime  \otimes z.
\end{align*}
We can derive that
  \[
	F_1 - F_1^\prime=\id_{C(\g)}\otimes(f_1 - f_1^\prime) =  d_{\tot}^\g \circ h + h \circ d_{\tot}^M \colon C(\g,\g[-1] \otimes \g) \to C(\g,\g),
\]
where $h   \colon C(\g, \g[1-k] \otimes \g) \to C(\g,\g)[-1]$ is the $C(\g)$-linear extension of
\[
h\colon  \g[1-k] \otimes \g  \to C(\g,\g)[-1],\qquad\qquad h(y[1-k] \otimes z):=(-1)^{\abs{y}}\iota_{y[1]}\eta  \otimes z.
\]
This proves that $h$ gives rise to an $\infty$-homotopy from $f_1$ to $f_1^\prime$.
\end{Ex}
\begin{Ex} \label{Ex: Lie pair continued}
Let $(\largeg, \h)$ be a pair of dg Lie algebras as in Example~\ref{Ex: Lie pair}. For each  splitting of Sequence~\eqref{SES of VS in Lie pair}, we obtain a weak {\LP module} $f = f_0 + f_1\colon {\bb}[-1]\rightsquigarrow \h$.
A different splitting of Sequence~\eqref{SES of VS in Lie pair} is determined by a linear map $\lambda\colon \bb\to \h$ such that the new splitting is given by
\[
j^\prime=\lambda+j\colon {\bb} \rightarrow \largeg,  b\mapsto \lambda(b)+j(b), \quad \mbox{and}\quad \pr^\prime_\h = \pr_\h-\lambda \colon \largeg  \rightarrow \h,  x+j(b)\mapsto x-\lambda(b).
\]
From this splitting one obtains another weak {\LP module} $f^\prime = f_0^\prime +  f^\prime_1\colon {\bb}[-1]\rightsquigarrow \h$, where the corresponding maps $f_0^\prime \colon \B[-1] \to \h$ and $f_1^\prime \colon B[-1] \to C^1(\g, \g)$ are given by
\[
 f_0^\prime(b) = (-1)^{\abs{b}}d(j^\prime)(b) = (-1)^{\abs{b}}d(j+\lambda)(b) = f_0(b) + (-1)^{\abs{b}}d(\lambda)(b),
\]for all homogeneous $b\in \bb$,
and
\begin{eqnarray*}
	f_1^\prime(b) &:=& (-1)^{\abs{b}}(i^\ast \otimes\pr^\prime_{\h})(\dadjoint j^\prime( b))  \\
      &=&  (-1)^{\abs{b}}(i^\ast\otimes(\pr_{\h}-\lambda))(\dadjoint(\lambda(b)+b))= f_1(b) + (-1)^{\abs{b}} \dadjoint \circ\lambda(b) -(i^\ast \otimes\lambda) (\dadjoint b).
\end{eqnarray*}
We now show that $f$ and $f^\prime$ are homotopic.	
In fact, we can define $h\colon \bb[-1] \to \h[-1]$ by
\[
h(b[-1]) := \lambda(b)[-1],\quad \forall~ b\in \bb.
\]
By $C(\h)$-linear extension, we regard $h$ as a map $C(\h,\bb[-1]) \to C(\h,\h)[-1]$. It is clear that
\[
	F - F^\prime =  d_{\tot}^\h \circ h + h \circ d_{\tot}^\bb \colon C(\h,\bb) \to C(\h,\h).
\]
Thus, $h$ builds an $\infty$-homotopy between $f$ and $f^\prime$.
\end{Ex}

\section{From weak {\LP modules} to Leibniz infinity algebras}\label{Section: HLA}
In this section, we elaborate a natural construction of Leibniz$_\infty$ algebra from a weak {\LP module}.
Let $f \colon M \rightsquigarrow \g$ be a weak {\LP module} over a dg Lie algebra $\g$.
Then $\secondC(\g,M)$ is a   $C(\g)$-module. We shall prove that $\secondC(\g,M)$ admits a canonical Leibniz$_\infty$  algebra structure over $C(\g)$. This construction is a natural generalization of the functor $\mathcal{G}$ due to Loday and Pirashvili (see \eqref{Functor:mathcalG}).

\subsection{Leibniz infinity algebras arising from weak {\LP modules}}
We first recall the notion of Leibniz$_\infty$ algebras.
\begin{Def}[\cites{AP, Uchino}]
	A Leibniz$_\infty$ algebra    is a $\Z$-graded vector space $L$ equipped with a collection of multilinear maps $\mu_k\colon \otimes^k L \to L[2-k]$, satisfying
\begin{align*}
	&\sum_{i+j=n+1}\sum_{k=j}^{n}\sum_{\sigma \in \sh(k-j,j-1)}\chi(\sigma)(-1)^{(k+1-j)(j-1)} (-1)^{j(\abs{x_{\sigma(1)}}+\cdots+ \abs{x_{\sigma(k-j)}})} \notag \\
	&\qquad\qquad\qquad\mu_{i}(x_{\sigma(1)},\cdots,x_{\sigma(k-j)},\mu_{j}(x_{\sigma(k-j+1)}, \cdots,x_{\sigma(k-1)}, x_k), x_{k+1},\cdots, x_n)=0,
\end{align*}
	for all $n \geqslant 1$ and all homogeneous elements $x_1,\cdots,x_n \in L$, where $\sh(p,q)$ denotes the set of $(p,q)$-shuffles, and $\chi(\sigma) = \sgn(\sigma)\epsilon(\sigma)$ is the anti-Koszul sign of $\sigma$.
\end{Def}

Suppose that $\A=(\A,d_\A)$ is a dg algebra.
We are particularly interested in Leibniz$_\infty$ algebras which are simultaneous  $\A$-modules.
\begin{Def}\label{Def:LeibnizinfinityA}
A Leibniz$_\infty$ $\A$-algebra is a Leibniz$_\infty$ algebra $(L,\{\mu_k\}_{k\geq1})$   such that the cochain complex $(L,d_1)$ is an $\A$-module, and all higher brackets $\mu_k\colon \otimes^k L \rightarrow L$ ($k\geqslant 2$) are $\A$-multilinear.

A morphism of Leibniz$_\infty$ $\A$-algebras from $(L,\{\mu_k\}_{k\geqslant 1})$ to $(M,\{\nu_k\}_{k\geqslant 1})$ consists of a collection of multi-$\A$-linear maps $\phi_m \colon \otimes^m L \to M[1-m]$ satisfying some natural compatibility conditions (see~\cite{AP}).
\end{Def}
Denote by $\Leib^\infty_{\A}$ the category of Leibniz$_\infty$ $\A$-algebras.
We also need the notion of Leibniz$_\infty[1]$ algebras, which is equivalent to the notion of Leibniz$_\infty$ algebras by a degree shifting.
\begin{Def}[\cites{CSX,CLX}]\label{Def: Leibnizinfty}
	A Leibniz$_\infty[1]$ algebra (over $\k$) is a graded vector space $U = \oplus_{n \in \Z}U^n$,  together with a sequence $\{\lambda_k\colon \otimes^k U \rightarrow U[1]\}_{k \geqslant 1}$ of   $\k$-multilinear maps satisfying
\begin{align}\label{Leib}
&\sum_{i+j=n+1}\sum_{k=j}^{n}\sum_{\sigma \in \sh(k-j,j-1)}\epsilon(\sigma)(-1)^{\abs{x_{\sigma(1)}}+\cdots+ \abs{x_{\sigma(k-j)}}} \notag \\
&\lambda_{i}(x_{\sigma(1)},\cdots,x_{\sigma(k-j)},\lambda_{j}(x_{\sigma(k-j+1)}, \cdots,x_{\sigma(k-1)}, x_k), x_{k+1},\cdots, x_n)=0, \notag
\end{align}
for all $n \geqslant 1$ and all homogeneous elements $x_i \in U$, where $\sh(p,q)$ denotes the set of $(p,q)$-shuffles ($p,q \geqslant 0$), and $\epsilon(\sigma)$ is the Koszul sign of $\sigma$.
	
A morphism of Leibniz$_\infty[1]$ algebras   from $(U,\{\lambda_k\}_{k\geqslant 1})$ to $(K, \{\tau_k\}_{k\geqslant 1})$ consists of a family of multilinear maps $\phi_m: \otimes^m U \to K$ that are compatible with the structure maps $\lambda_k$ and $\tau_k$ (see \cite{CSX}).
\end{Def}
In the same fashion as that of Definition \ref{Def:LeibnizinfinityA}, we can define Leibniz$_\infty[1]$ $\A$-algebras  and their morphisms, and form a category  which we denote  by $\Leib^{\infty[1]}_\A$. Certainly, it is equivalent to the category $\Leib^{\infty}_\A$ by degree shifting.

 The main result of this section is the following theorem.
\begin{Thm}\label{Thm:wLP to HLA}
For each weak \LP module $f \colon M \rightsquigarrow \g$, there exists a Leibniz$_\infty[1]$ $C(\g)$-algebra structure $\{\lambda_k\}_{k\geqslant 1}$ on the $C(\g)$-module $C(\g,M[1])$ whose unary bracket $\lambda_1$ is the Chevalley-Eilenberg differential $d_{\tot}^{M[1]}$.
This correspondence yields a functor
\[
  \mathcal{G}_\g^\infty \colon  \weakLPmodg  \to \Leib^{\infty[1]}_{C(\g)}.
\]
Furthermore, this functor is  homotopy invariant, i.e., if $f^\prime \colon M \rightsquigarrow \g$ is another weak \LP module that is homotopic to $f$, then the two Leibniz$_\infty[1]$ $C(\g)$-algebra structures on $C(\g, M[1])$ are isomorphic to each other.
\end{Thm}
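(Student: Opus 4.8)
The plan is to produce the Leibniz$_\infty[1]$ brackets as \emph{higher derived brackets} built from the weak \LP module data, and then to deduce both functoriality and homotopy invariance from the naturality of that construction. Throughout I identify, via Proposition~\ref{prop-def via dg bundle}, the target $C(\g,\g)$ with the space $\Gamma(\TminusoneBg)=\Der(C(\g))$ of vector fields on the dg manifold $B\g$, so that the $C(\g)$-linear extension $F=\id_{C(\g)}\otimes\sum_k f_k$ of a weak \LP module sends each section of $C(\g,M)$ to a vector field on $B\g$. Such a vector field acts on $C(\g,M)=\Gamma(\mathcal E)$ through the flat structure encoded in $d^M_{\tot}$, via the contraction operators $\iota_{F(-)}$. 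I would set $\lambda_1:=d^{M[1]}_{\tot}$ and, generalizing the classical diamond $m_1\diamond m_2=\iota_{f(m_1)[1]}d^M_1 m_2$ of~\eqref{Eq: Leibniz bracket from f}, define the binary bracket on generators by $\lambda_2(s_1,s_2)=\iota_{F(s_1)}\bigl(d^M_{\tot}s_2\bigr)$ (with the degree shift of $M[1]$ absorbed into the signs), extended $C(\g)$-bilinearly. The higher brackets $\lambda_k$ ($k\geqslant 3$) are the iterated derived brackets obtained by repeatedly commuting $d_{\tot}$ past the contraction operators $\iota_{F(s_i)}$; the resulting explicit combinatorial expansion is exactly the sum over rooted forests with \MO s recorded in Section~\ref{Sec:rootedtree}.

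First I would verify that the $\lambda_k$ are $C(\g)$-multilinear for $k\geqslant 2$: this is immediate since each $\iota_{F(s)}$ is $C(\g)$-linear in the coefficient direction and the $f_k$ are genuine linear maps $M\to C^k(\g,\g)$ whose $C(\g)$-linear extension defines $F$. The heart of the argument is checking the defining Leibniz$_\infty[1]$ identities of Definition~\ref{Def: Leibnizinfty}. I would organize the $\lambda_k$ as higher derived brackets in the graded Lie algebra $\mathfrak L$ of operators on $C(\g,M)$ generated by $\Delta:=d_{\tot}$ together with the contraction operators $\{\iota_{F(s)}\}$, the latter serving as the arguments of the derived brackets. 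In this framework the tower of Leibniz$_\infty[1]$ relations is a formal consequence of three inputs: $\Delta^2=(d^M_{\tot})^2=0$; the $\infty$-morphism equation $F\circ d^M_{\tot}=d^{\g}_{\tot}\circ F$; and the fact, built into $d^{\g}_{\tot}$ of~\eqref{Eqt:dtotg}, that the vector fields $\Im F$ close under the Lie bracket on $\g$ up to the differential (Jacobi for $\g$). This is the step I expect to be the main obstacle, since it is where all the Koszul and anti-Koszul signs of Definition~\ref{Def: Leibnizinfty} must be matched; packaging the construction through the derived-bracket formalism is precisely what lets me avoid a term-by-term sign chase and reduce everything to $\Delta^2=0$.

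For functoriality, recall that a morphism of weak \LP modules $\phi\colon(f\colon M\rightsquigarrow\g)\to(g\colon N\rightsquigarrow\g)$ is an $\infty$-morphism $\phi\colon M\rightsquigarrow N$ in $\lfMod_\g^\infty$ with $g\circ\phi=f$. I would define $\mathcal G^\infty_\g(\phi)$ to be the morphism of Leibniz$_\infty[1]$ $C(\g)$-algebras whose weight components are assembled from the $\phi_k$ by the same contraction bookkeeping used for the brackets, and check the compatibility conditions attached to Definition~\ref{Def: Leibnizinfty}. Because $\mathcal G^\infty_\g(\phi)$ is built from the $C(\g)$-linear extension of $\phi$, preservation of identities and compositions reduces to the intertwining relation $g\circ\phi=f$ together with the $\infty$-morphism equation for $\phi$; this is routine once the bracket construction is in place.

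Finally, for homotopy invariance I would use that, for a \emph{fixed} source $M$, the unary bracket $\lambda_1=d^{M[1]}_{\tot}$ does not depend on $f$, so homotopic weak \LP modules $f,f'\colon M\rightsquigarrow\g$ yield two Leibniz$_\infty[1]$ structures on the \emph{same} $C(\g)$-module $C(\g,M[1])$ sharing the same $\lambda_1$. Unfolding Definition~\ref{Def: homotopy of hets} into its explicit form, a homotopy is a locally finite family $\{h_n\colon M\to C^n(\g,\g)[-1]\}$ with $F-F'=d^{\g}_{\tot}\circ h+h\circ d^M_{\tot}$. I would construct an $\infty$-isomorphism of Leibniz$_\infty[1]$ algebras with leading term $\id_{C(\g,M[1])}$ whose higher components are produced recursively from $h$ (equivalently, as the transformation induced by the operator $\iota_h$), and verify that it intertwines the two bracket towers. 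Since the derived-bracket construction is natural in the pair $(F,\Delta)$, the chain homotopy $F-F'=d^{\g}_{\tot}h+hd^M_{\tot}$ between the defining maps integrates to such an isomorphism; the bookkeeping of its higher components is the second, milder, technical point. Combining these four steps yields the functor $\mathcal G^\infty_\g$ and its homotopy invariance.
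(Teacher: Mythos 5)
Your proposal is correct and takes essentially the same route as the paper: the brackets you define are exactly the Kapranov-functor brackets used there (the paper's $\delta$-connection is precisely your contraction operator, $\nabla_{s}=\iota_{F(s)}$ on $C(\g,M[1])$, so its $\mathcal{R}^\nabla_n$ are your iterated derived brackets), and your functoriality and homotopy-invariance steps (an isomorphism with leading term $\id$ built recursively from the homotopy $\{h_n\}$) mirror the paper's appeals to \cite{CLX}*{Propositions 3.9 and 3.18} together with Proposition~\ref{prop: homotopy invariance of CE functor}. The only difference is packaging: where you certify the Leibniz$_\infty[1]$ identities via the higher-derived-brackets formalism, the paper outsources the identical verification --- resting on the same inputs, namely $(d^{M}_{\tot})^2=0$ and the dg-morphism equation for $F$ (which already encodes the Jacobi identity of $\g$) --- to the Kapranov functor of \cite{CLX}.
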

Indeed, this theorem is a direct application of the Kapranov functor that we introduced in
an earlier work \cite{CLX}. It is also a generalization of \cite{Qiaoold}*{Theorem 3.8}.
The proof is a straightforward adaptation of the argument \emph{op.cit.} and is thus omitted.
Below, we give explicitly the construction of $\mathcal{G}_\g^\infty$.

Firstly, we show that for each weak {\LP module} $f \colon M \rightsquigarrow \g$, there is a Leibniz$_\infty[1]$ $C(\g)$-algebra structure $\{\lambda_k\}_{k\geqslant 1}$ on the  $C(\g)$-module $C(\g,M[1])$.
In fact,  given $f\colon M \rightsquigarrow \g$, we have
a dg derivation of $C(\g)$
\[
\delta\colon C(\g) \to \secondC(\g,M^\vee[-1]).
\]
There is a \textbf{$\delta$-connection} on the dg module $C(\g ,M[1])$, i.e.,
 a degree $0$ linear map of graded $\k$-vector spaces
 \[
 \nabla \colon C(\g ,M[1])  \rightarrow  \secondC(\g,M^\vee[-1]\otimes M[1])
 \]
  defined by
   \[
 \nabla(\alpha \otimes m[1]) := \delta(\alpha) \otimes m[1]
 \]
 for all $\alpha \in C(\g)$ and $m  \in M$.
 Alternatively, one can interpret $\nabla$ as the assignment of covariant derivations along elements $b\in C(\g ,M[1])$, i.e., $\nabla_{b} \colon C(\g ,M[1])\to C(\g ,M[1])$. Explicitly, if $b=\xi\otimes u[1] $ where $\xi\in C(\g), u\in M$, then $\nabla_{b}$ is given by
 \[
 \nabla_{\xi\otimes u[1]}(\alpha\otimes m[1])=\xi\odot (\iota_{u[1]}\delta(\alpha))\otimes m[1]=\xi\odot (\iota_{f(u)[1]}\alpha)\otimes m[1].
 \]

According to \cite{CLX} and by applying the Kapranov functor on the dg derivation $\delta$,
 there exists a Leibniz$_\infty[1]$ $C(\g)$-algebra structure $\{\lambda_k\}_{k\geqslant 1}$ on the $C(\g)$-module  $C(\g ,M[1])$:

\begin{itemize}
	\item $\lambda_1 = d_{\tot}^{M[1]}$ is the differential on the  $C(\g)$-module $\secondC(\g,M[1])$;
	\item $\lambda_2$ is $C(\g)$-bilinear  and   determined by
	\begin{align*}\nonumber
 \mathcal{R}^\nabla_2\colon ~M[1]\otimes M[1]&\to C(\g,M[1])[1],  \\ 
	m_1[1]\otimes m_2[1] &\mapsto (-1)^{\abs{m_1}-1} \nabla_{m_1[1]}(d_{\tot}^{M[1]}m_2[1]) =(-1)^{\abs{m_1}-1}\iota_{f(m_1)[1]}\left( \sum_{k=1}^\infty d_k^Mm_2 \right)[1];
\end{align*}
\item $\lambda_n$, $n \geqslant 3$, are all $C(\g)$-multilinear, and are specified by a family of linear maps
\begin{align*}
	\mathcal{R}^\nabla_n\colon (M[1])^{\otimes n} &\rightarrow C(\g,M[1])[1],
\end{align*}
defined  by
\begin{eqnarray}\nonumber
\mathcal{R}^\nabla_{n }(m_1[1],\cdots,m_{n }[1])& =& (-1)^{\abs{m_1}-1}  \nabla_{m_1[1]}\circ \mathcal{R}^\nabla_{n-1}   (m_2[1], \cdots, m_{n }[1])\\\nonumber		&=&(-1)^{\abs{m_1}+\cdots+\abs{m_{n-1}}-n+1} \iota_{f(m_1)[1]}\cdots \iota_{f(m_{n-1})[1]}\left(\sum_{k=1}^\infty d_k^Mm_{n }\right)[1],\\
		&&\label{align:firstlambdan}
\end{eqnarray}
 for all   homogeneous elements $m_1 , \cdots, m_{n }  \in  M $.
 \end{itemize}
In this construction, thanks to the {\LF} condition,  the convergence issue of the infinite sum  $\sum_{k=1}^\infty   d_k^Mm_{n+1}$ does not appear. We also give an explicit description of $\lambda_k$ via a summation over trees in Proposition \ref{Prop: Leibniz infty via OPT}.

In summary,  given an object $(f\colon M \rightsquigarrow \g) \in \weakLPmodg$, we obtain a Leibniz$_\infty[1]$-$C(\g)$-algebra $\mathcal{G}_\g^\infty(f) := ( C(\g ,M[1]),\{\lambda_k\}_{k\geqslant 1})$.
Meanwhile, given a morphism $\phi$ from $(f \colon M \rightsquigarrow \g)$ to $(g \colon N \rightsquigarrow \g)$, there is a morphism $\mathcal{G}_\g^\infty(\phi)$ of Leibniz$_\infty[1]$ $C(\g)$-algebras from $ \mathcal{G}_\g^\infty(f)=( C(\g ,M[1]),\{\lambda_k\}_{k\geqslant 1})$ to $\mathcal{G}_\g^\infty(g)= ( C(\g ,W[1]),\{\nu_k\}_{k\geqslant 1})$ whose first component $\mathcal{G}_\g^\infty(\phi)_1$ is $\phi$. Using the formulae provided by \cite{CLX}*{Proposition 3.9}, all higher components
\[
\mathcal{G}_\g^\infty(\phi)_k \colon C(\g, M[1]) \otimes_{C(\g)} \cdots \otimes_{C(\g)} C(\g,M[1]) \to C(\g,N[1]),\quad  k \geqslant 2
\]
are $C(\g)$-linear, and are given inductively by
\[
 \mathcal{G}_\g^\infty(\phi)_{k+1}(m_1[1],\cdots,m_{k+1}[1]) = \iota_{\phi(m_1[1])} \mathcal{G}_\g^\infty(\phi)_{k} (m_2[1],\cdots,m_{k+1}[1]),\quad  k \geqslant 1.
\]

Finally, we show that the previous assignment is homotopy invariant.
If $f^\prime \colon M \rightsquigarrow \g$ is another weak {\LP module} that is homotopic to $f$, then the associated Leibniz$_\infty[1]$ $C(\g)$-algebra structure $\{\lambda^\prime_k\}_{k\geqslant 1}$ on $C(\g,M[1])$ is isomorphic to the one $\{\lambda_k\}_{k\geqslant 1}$ from $f$.  In fact, by Proposition~\ref{prop: homotopy invariance of CE functor}, their associated $C(\g)$-module morphisms $f,f^\prime \colon \secondC(\g,M[1]) \to \secondC(\g,\g)$ are homotopic. Thus, the associated $\secondC(\g,M^\vee[-1])$-valued dg derivations $\delta$ and $\delta^\prime$ of $f$ and $f^\prime$, respectively, are homotopic. By~\cite{CLX}*{Proposition 3.18}, there exists an isomorphism of Leibniz$_\infty[1]$ $C(\g)$-algebras
\[
\phi_\bullet \colon (C(\g,M[1]),\lambda^\prime_\bullet) \to (C(\g,M[1]), \lambda_\bullet),
\]
whose first map $\phi_1 = \id \colon C(\g,M[1]) \to C(\g,M[1])$.

It is tempting to see what happens when passing to the total Chevalley-Eilenberg cohomology $H_{\tot}(\g,M)$ of the {\LF} $\infty$-$\g$-module $M$.  In general, for a Leibniz$_\infty[1]$ algebra    $U$ whose structure maps are $\lambda_k$ ($k\geqslant 1$), the cohomology $H(U;\lambda_1)[-1]$ inherits   an ordinary  Leibniz algebra from $\lambda_2$. Thus, from the Leibniz$_\infty[1]$ $C(\g)$-algebra   $C(\g,M[1])$ declared by Theorem \ref{Thm:wLP to HLA} whose first structure map is $ d_{\tot}^{M[1]}$,  the   cohomology $H_{\tot}(\g,M)$   should be endowed with a   Leibniz    bracket. However, this bracket is simply zero --- see \cite{Qiaoold}*{Proposition 2.9 and Remark 2.12}. Thereby, it is futile to think about $H_{\tot}(\g,M)$. What should really be examined is the tangent cohomology $\tanH^\bullet(M)=H(M,d^M_0)$.

\begin{prop}\label{main prop}
  Let $(f\colon M \rightsquigarrow \g) \in \weakLPmodg$ be a weak Loday-Pirashvili module.
   The induced map of tangent cohomology spaces $\tanH(f_0) \colon \tanH^\bullet(M) \to \tanH^\bullet(\g)$ is a Loday-Pirashvili module. Moreover, the associated graded Leibniz algebra structure $\diamond$ on $\tanH^\bullet(M)$ (see Remark \ref{Rem:LPgradedcase}) can be expressed by truncation of the Leibniz$_\infty[1]$ $C(\g)$-algebra  structure on $C(\g,M[1])$ as follows --- For any $ u^{\sim} , v^{\sim} \in \tanH^\bullet(M)$ (where $u,v\in M$ are $d^M_0$-closed, $u^{\sim}$ and $v^{\sim}$ denote the corresponding cohomology classes), one has
   \begin{equation*}\label{circandPrM}
    u^{\sim} \diamond v^{\sim}= (-1)^{\abs{u}-1} \big(\mathrm{Pr}_{M}\circ \lambda_2(u[1],v[1])[-1]\big)^{\sim},
   \end{equation*}
   where $\mathrm{Pr}_M:~C(\g,M)\to M$ is the standard projection and $\lambda_2$ is the binary bracket of the Leibniz$_\infty[1]$ $C(\g)$-algebra    $C(\g,M[1])$.
\end{prop}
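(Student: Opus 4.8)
The plan is to verify the identity at the level of $d_0^M$-closed representatives and then read it off in cohomology. The first assertion is essentially recorded already: comparing weight-$0$ components of the intertwining relation $F\circ d_{\tot}^M=d_{\tot}^\g\circ F$ shows that $f_0\colon(M,d_0^M)\to(\g,d)$ is a cochain map, while its weight-$1$ components show $f_0$ is compatible with the two $\g$-actions up to the higher structure maps; hence $\tanH(f_0)$ descends to a morphism of $\tanH^\bullet(\g)$-modules into the adjoint module, i.e.\ a Loday-Pirashvili module. By Proposition~\ref{Prop:LPtodgLeib} and Remark~\ref{Rem:LPgradedcase}, the induced graded Leibniz bracket is then
$$u^\sim\diamond v^\sim=\big[\,\rho_{\g,M}(f_0(u);v)\,\big]^\sim=\big[\,\iota_{f_0(u)[1]}\,d_1^M(v)\,\big]^\sim,$$
for $d_0^M$-closed $u,v$, where I use the defining relation $\iota_{x[1]}d_1^M=\rho_{\g,M}(x;-)$ together with the fact that $f_0(u)$ is $d$-closed.

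It then remains to match this with the right-hand side. Starting from the binary bracket of Theorem~\ref{Thm:wLP to HLA},
$$\lambda_2(u[1],v[1])=(-1)^{\abs{u}-1}\,\iota_{f(u)[1]}\Big(\sum_{k\geqslant 1}d_k^M v\Big)[1],$$
I would expand $f(u)=\sum_{j\geqslant 0}f_j(u)$ with $f_j(u)\in C^j(\g,\g)$ and observe that a single contraction $\iota_{f_j(u)[1]}$ sends $C^k(\g,M)$ into $C^{j+k-1}(\g,M)$, since the $\g$-component of $f_j(u)$ lowers the form-weight by one while its $C^j(\g)$-component is multiplied in. Consequently $\mathrm{Pr}_M$ (the projection onto the weight-$0$ summand $M=C^0(\g,M)$) annihilates every term except the one with $j+k-1=0$; as $k\geqslant1$ and $j\geqslant0$, the unique survivor is $(j,k)=(0,1)$, giving
$$\mathrm{Pr}_M\,\lambda_2(u[1],v[1])=(-1)^{\abs{u}-1}\big[\,\iota_{f_0(u)[1]}\,d_1^M(v)\,\big][1].$$
Shifting back by $[-1]$ and multiplying by the outer $(-1)^{\abs{u}-1}$ yields $\iota_{f_0(u)[1]}d_1^M(v)$, because $(-1)^{2(\abs{u}-1)}=1$; passing to $\sim$-classes and comparing with the opening paragraph proves the claim.

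The main obstacle is the sign and weight bookkeeping through the shift isomorphisms $[1]$ and $[-1]$: I must confirm that these introduce no Koszul signs beyond the $(-1)^{\abs{u}-1}$ displayed, and that the contraction convention $\iota_{f(u)[1]}$ interacts with the $C(\g)$-module structure exactly as above, so that the weight-$0$ part is precisely $\iota_{f_0(u)[1]}d_1^M(v)$ and nothing more. Well-definedness requires no separate argument: once the representative identity $(-1)^{\abs{u}-1}\mathrm{Pr}_M\lambda_2(u[1],v[1])[-1]=\rho_{\g,M}(f_0(u);v)$ holds, the right-hand element is automatically $d_0^M$-closed (because $f_0(u)$ is $d$-closed and the $\g$-action is a cochain map), and its class is by construction the Loday-Pirashvili bracket of the opening paragraph. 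I would emphasize that this is genuinely a statement about the tangent cohomology $\tanH^\bullet(M)=H(M,d_0^M)$, and must not be confused with the truncation of $\lambda_2$ to the total cohomology $H_{\tot}(\g,M)$, on which the induced Leibniz bracket vanishes.
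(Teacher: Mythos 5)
Your proposal is correct, and it is worth noting that the paper itself supplies \emph{no} proof of this proposition: the first assertion is already recorded verbatim after Definition~\ref{Def: wLP} in Section~\ref{Sec:WeakLPmodules}, and for the bracket formula the authors only remark that \cite{Qiaoold}*{Theorem 1.20 and Proposition 3.9} are ``relevant.'' Your direct verification is the natural argument that fills this in. The core step checks out against the construction in the paper: by Equation~\eqref{align:firstlambdan} one has $\lambda_2(u[1],v[1])=(-1)^{\abs{u}-1}\iota_{f(u)[1]}\bigl(\sum_{k\geqslant 1}d_k^M v\bigr)[1]$, the contraction by $f_j(u)\in C^j(\g,\g)$ indeed maps $C^k(\g,M)$ into $C^{j+k-1}(\g,M)$, so $\mathrm{Pr}_M$ annihilates all terms except $(j,k)=(0,1)$; the two factors of $(-1)^{\abs{u}-1}$ cancel, leaving the class of $\iota_{f_0(u)[1]}d_1^M(v)=\rho(f_0(u);v)$, which is exactly the Loday--Pirashvili bracket attached to $\tanH(f_0)$ via Proposition~\ref{Prop:LPtodgLeib} and Remark~\ref{Rem:LPgradedcase}. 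Your closing remark distinguishing $\tanH^\bullet(M)$ from $H_{\tot}(\g,M)$ also matches the discussion immediately preceding the proposition.

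One small caveat of wording, not substance: in the locally finite $\infty$-setting, $d_1^M$ is \emph{not} strictly a $\g$-action and is a cochain map only up to the correction term coming from the weight-one component of $(d_{\tot}^M)^2=0$ (namely the part of $d_{\CE}$ dual to $d$ composed with $d_1^M$). So the parenthetical justification ``the $\g$-action is a cochain map'' should be replaced by: closedness of $\iota_{f_0(u)[1]}d_1^M(v)$ follows from that weight-one identity together with $d(f_0(u))=0$; equivalently, one can simply invoke the fact (stated in Section~\ref{Sec:LFinftygmodule}) that $d_1^M$ induces a well-defined $\tanH^\bullet(\g)$-module structure on $\tanH^\bullet(M)$. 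The same looseness appears in your first paragraph, where the weight-one component of $F\circ d_{\tot}^M=d_{\tot}^\g\circ F$ gives compatibility of $f_0$ with $d_1^M$ and $\dadjoint$ only up to terms involving $f_1$, $d_0^M$, and the Chevalley--Eilenberg differential; since those terms are exact on tangent cohomology classes, your conclusion that $\tanH(f_0)$ is a morphism of $\tanH^\bullet(\g)$-modules stands.
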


The results  \cite{Qiaoold}*{Theorem 1.20 and Proposition 3.9} are also relevant to our discussion above, and interested readers are advised to take a closer look.

\subsection{Functoriality}\label{Sec:Functoriality}
We now study a functorial property of our construction of Leibniz$_\infty$ algebras from weak {\LP modules} when the target dg Lie algebra varies.
\begin{Def}\label{Def:morphismofLPmodules}
A morphism $\Phi$ of weak {\LP modules}  from $ f \colon M \rightsquigarrow \g \in \weakLPmodg $   to $ f^\prime \colon M^\prime \rightsquigarrow \g^\prime  \in   \weakLPmodgprime $  consists of a pair $(\phi,\underline{\phi})$, where
\begin{compactenum}
  \item $\underline{\phi} \colon \g \to \g^\prime$ is a morphism of dg Lie algebras, which pulls back the   $\infty$-morphism $ f^\prime\colon M^\prime\rightsquigarrow \g^\prime  \in   \lfMod_{\g^\prime}^\infty (M^\prime,\g^\prime)$ to {an $\infty$-morphism}   $ \underline{\phi}^{\vee}f^\prime\colon M^\prime \rightsquigarrow \g^\prime  $ $\in$ $ \lfMod_\g^\infty (M^\prime,\g^\prime)$ in an obvious manner. Here we treat $M^\prime$ and $\g^\prime$ as {locally finite} $\infty$-$\g$-modules via $\underline{\phi}$.
  \item $\phi \colon M \rightsquigarrow M^\prime$ is {an $\infty$-morphism} of {\LF} $\infty$-$\g$-modules such that the following diagram commutes:
      \[
    \begin{tikzcd}
                   M  \arrow[r,"f"] \arrow[d,  rightsquigarrow, "\phi"] & \g \arrow[d, "\underline{\phi}"] \\
                   M^\prime \arrow[r,  rightsquigarrow, "\underline{\phi}^{\vee} f^\prime"] & \g^\prime,
    \end{tikzcd}
   \]
  i.e.,
  \[
           \underline{\phi} \circ f  =  \underline{\phi}^{\vee}f^\prime \circ \phi  \in  \lfMod_\g^{\infty}(M,\g^\prime).
  \]
\end{compactenum}
\end{Def}
It can be directly verified that the collection of weak {\LP modules}, together with morphisms as defined above, forms a category which we denote by $ \mathbf{wLP} $.

Given two weak {\LP module}s   $ f  \colon M \rightsquigarrow \g $ and $ f^\prime  \colon {M^\prime} \rightsquigarrow \g^\prime $, by Theorem~\ref{Thm:wLP to HLA} and using the functors $\mathcal{G}_\g^\infty$ and $\mathcal{G}_{\g^\prime}^\infty$, respectively, there are two Leibniz$_\infty[1]$ algebras, namely $(C(\g, M[1]),\lambda_\bullet)$ over $C(\g)$  and   $(C(\g^\prime, M^\prime[1]),\lambda^\prime_\bullet)$ over $C(\g^\prime)$.
The next proposition implies that   $\mathcal{G}_\g^\infty$ depends on $\g$  in a  functorial manner.
\begin{prop}\label{prop: functorial wrt g}
Let $\Phi = (\phi,\underline{\phi})$ be a morphism  from objects $ f $ to $ f^\prime$ in the category $ \mathbf{wLP} $.
\begin{compactenum}
  \item The Koszul dual $\underline{\phi}^{\vee} \colon C(\g^\prime) \to C(\g)$ of the morphism $\underline{\phi} \colon \g \to \g^\prime$ of dg Lie algebras pulls back the Leibniz$_\infty[1]$ $C(\g^\prime)$-algebra $(C(\g^\prime, M^\prime[1]),\lambda_\bullet^\prime)$ to the Leibniz$_\infty[1]$ $C(\g)$-algebra $(C(\g, M^\prime[1])$, $\mu_\bullet^\prime)$, where
      \[
        \mu_k^\prime(m_1^\prime[1],\cdots,m_k^\prime[1]) = (\underline{\phi}^{\vee} \otimes \id_{M^\prime})(\lambda_k^\prime(m_1^\prime[1], \cdots, m_k^\prime[1])),
      \]
      for all $k \geqslant 1$ and all homogeneous $m_1^\prime,\cdots,m_k^\prime \in {M^\prime}$.
  \item The image $\secondC(\g,\phi)$ of the $\infty$-morphism $\phi \colon M \rightsquigarrow {M^\prime}$ of $\g$-modules under the Chevalley-Eilenberg functor $\secondC(\g,-)$ defines a strict morphism of Leibniz$_\infty[1]$ $C(\g)$-algebras
      \[
         \secondC(\g,\phi) \colon (C(\g,M[1]),\lambda_\bullet) \to (C(\g,{M^\prime}[1]),\mu_\bullet^\prime).
      \]
\end{compactenum}
\end{prop}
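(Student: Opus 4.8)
The plan is to prove both statements by reducing to the explicit Kapranov-type formulas for the brackets recorded in \eqref{align:firstlambdan}, exploiting two facts: that $\underline{\phi}^{\vee}\colon C(\g^\prime)\to C(\g)$ is a weight-preserving, degree $0$ morphism of commutative dg algebras, and that the defining compatibility $\underline{\phi}\circ f = \underline{\phi}^{\vee}f^\prime\circ\phi$ from Definition~\ref{Def:morphismofLPmodules} holds. Since every structure map $\lambda_k,\lambda_k^\prime,\mu_k^\prime$ (for $k\geqslant 2$) is multilinear over the relevant Chevalley--Eilenberg algebra, and $\secondC(\g,\phi)$ is $C(\g)$-linear, in both statements it suffices to check the required identities on the generators $M^\prime[1]$ (resp. $M[1]$) and then extend by multilinearity.

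For Statement (1), I would first observe that $\underline{\phi}^{\vee}\otimes\id_{M^\prime}\colon C(\g^\prime,M^\prime[1])\to C(\g,M^\prime[1])$ is a cochain map intertwining the $\g^\prime$-total differential $\lambda_1^\prime=d_{\tot}^{M^\prime[1]}$ with the total differential of the $\g$-module structure pulled back along $\underline{\phi}$; this is immediate from $\underline{\phi}^{\vee}$ being a morphism of dg algebras and from the pullback action $\rho_\g = \rho_{\g^\prime}\circ(\underline{\phi}\otimes\id)$. Hence $\mu_1^\prime$ is a genuine differential on $C(\g,M^\prime[1])$. For the higher coherence relations, the key point is that $\underline{\phi}^{\vee}\otimes\id_{M^\prime}$ transports nested bracket compositions for $\lambda^\prime$ into the corresponding ones for $\mu^\prime$: writing $\lambda_j^\prime(\cdots)=\sum \beta^\prime\otimes m^\prime[1]$ with $\beta^\prime\in C(\g^\prime)$, the $C(\g^\prime)$-multilinearity of $\lambda_i^\prime$, the $C(\g)$-multilinearity of $\mu_i^\prime$, and the multiplicativity $\underline{\phi}^{\vee}(\beta^\prime\odot\gamma^\prime)=\underline{\phi}^{\vee}(\beta^\prime)\odot\underline{\phi}^{\vee}(\gamma^\prime)$ together give
\[
\mu_i^\prime(\cdots,\mu_j^\prime(\cdots),\cdots)=(\underline{\phi}^{\vee}\otimes\id_{M^\prime})\bigl(\lambda_i^\prime(\cdots,\lambda_j^\prime(\cdots),\cdots)\bigr)
\]
on generators, with identical Koszul signs since $\underline{\phi}^{\vee}$ is degree $0$. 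Summing over the Leibniz$_\infty[1]$ relation and using that $\{\lambda_k^\prime\}$ already satisfies it, the whole expression is the image under $\underline{\phi}^{\vee}\otimes\id_{M^\prime}$ of zero, which proves that $\{\mu_k^\prime\}$ is a Leibniz$_\infty[1]$ $C(\g)$-algebra; conceptually this is just base change of a Leibniz$_\infty[1]$ algebra along the cdga morphism $\underline{\phi}^{\vee}$.

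For Statement (2), set $\Phi:=\secondC(\g,\phi)$, a single $C(\g)$-linear map, and verify $\Phi\circ\lambda_k = \mu_k^\prime\circ\Phi^{\otimes k}$ by induction on $k$. The case $k=1$ asserts that $\Phi$ is a cochain map, which is exactly the statement that $\phi$ is an $\infty$-morphism of $\g$-modules intertwining $d_{\tot}^M$ and the pulled-back $d_{\tot}^{M^\prime}$. For the inductive step I would use the recursion $\mathcal{R}^\nabla_n(m_1[1],\cdots,m_n[1]) = (-1)^{\abs{m_1}-1}\nabla_{m_1[1]}\mathcal{R}^\nabla_{n-1}(m_2[1],\cdots,m_n[1])$ (and its $\underline{\phi}^{\vee}f^\prime$-analogue underlying $\mu^\prime$): pulling $\Phi$ through the covariant derivative $\nabla_{m_1[1]}$ and invoking the inductive hypothesis on the $(n-1)$-ary bracket. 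The decisive input here is the compatibility $\underline{\phi}\circ f=\underline{\phi}^{\vee}f^\prime\circ\phi$, which, after applying $\underline{\phi}^{\vee}$, identifies the contraction operator $\iota_{f(m_1)[1]}$ governing $\nabla$ with the operator governing $\nabla^\prime$ evaluated at $\Phi(m_1[1])$, so that $\Phi$ commutes strictly with the connections and no higher morphism components are needed.

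The main obstacle I anticipate is the inductive step of Statement (2): establishing the intertwining $\Phi\circ\nabla_{m_1[1]} = \nabla^{\prime}_{\Phi(m_1[1])}\circ\Phi$ requires carefully tracking how the higher components $\phi_l\colon M\to C^l(\g,M^\prime)$ of the $\infty$-morphism interact with the contraction operators and with the pullback $\underline{\phi}^{\vee}$, while keeping all Koszul signs consistent. It is precisely the weak Loday--Pirashvili compatibility relation that forces this identity to hold \emph{on the nose} (rather than merely up to homotopy), and hence makes $\secondC(\g,\phi)$ a \emph{strict} morphism; verifying this sign-by-sign is the only genuinely laborious part, the remainder being formal consequences of multilinearity and of $\underline{\phi}^{\vee}$ being a dg algebra map.
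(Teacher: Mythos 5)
Your proposal is correct and takes essentially the same route as the paper: statement (1) is handled as base change of a Leibniz$_\infty[1]$ algebra along the dg algebra morphism $\underline{\phi}^{\vee}$ (which the paper dismisses as obvious), and statement (2) is checked on generators via the explicit contraction formula \eqref{align:firstlambdan}, with the $k=1$ case being exactly the cochain-map property of $\secondC(\g,\phi)$ and the weak Loday--Pirashvili compatibility $\underline{\phi}\circ f = \underline{\phi}^{\vee}f^\prime\circ\phi$ serving as the decisive input that identifies the contraction/connection operators on the two sides. Your induction on $k$ using the recursion $\mathcal{R}^\nabla_n = \pm\,\nabla_{m_1[1]}\circ\mathcal{R}^\nabla_{n-1}$ is just the step-by-step form of the paper's one-shot computation with the unwound closed formula (iterated contractions applied to $d^M_{\tot}m_n$), and even the sign-by-sign bookkeeping you defer is deferred at the same level of detail in the paper's own argument.
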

\begin{proof}
The first statement is obvious. We prove the second one.
By abuse of notations, we simply write the  $C(\g)$-module morphism $\secondC(\g,\phi)$ as $\phi$.
It thus suffices to prove that
\begin{equation}\label{Eq: strict morphism in functorialy}
  \phi(\lambda_n(m_1[1], \cdots,m_n[1])) = \mu_n^\prime(\phi(m_1[1]),\cdots,\phi(m_n[1])),
\end{equation}
for all $n \geqslant 1$ and homogeneous elements $m_1,\cdots,m_n \in M$.

When $n = 1$, Equation~\eqref{Eq: strict morphism in functorialy} holds, since $\lambda_1$ and $\mu_1^\prime$ are differentials on $\secondC(\g,M)$ and $C(\g,{M^\prime})$, respectively, and $\phi \colon \secondC(\g,M) \to C(\g,{M^\prime})$ is a   $C(\g)$-module morphism by assumption.

For $n \geqslant 2$, consider $\underline{\phi}^{\vee} f^\prime \in \lfMod_\g^\infty({M^\prime}, \g^\prime)$, the pullback $\infty$-morphism of $f^\prime \in \lfMod_{\g^\prime}^\infty$ along the morphism $\underline{\phi} \colon \g \to \g^\prime$ of dg Lie algebras.
On the one hand, using Equation~\eqref{align:firstlambdan},  we have
\begin{align*}
  \phi(\lambda_{n}(m_1[1],\cdots,m_{n-1}[1],m_{n}[1])) &= (-1)^{\abs{m_1}+\cdots+\abs{m_{n-1}}-n+1} \iota_{f(m_1)[1]}\cdots \iota_{f(m_{n-1})[1]} \phi(d_{\tot}^M m_{n}[1]) \\
  &= (-1)^{\abs{m_1}+\cdots+\abs{m_{n-1}}-n+1} \iota_{\underline{\phi}(f(m_1))[1]}\cdots \iota_{\underline{\phi}(f(m_{n-1}))[1]} d_{\tot}^{M^\prime}\phi(m_{n})[1].
\end{align*}
By the second condition of Definition \ref{Def:morphismofLPmodules}, we obtain a commutative diagram in the category $\Mod_{C(\g)}$ of   $C(\g)$-modules
  \begin{equation*}\label{Eq: CD for morphisms}
   \begin{tikzcd}
     \secondC(\g,M) \arrow[rr, "\id_{C(\g)} \otimes f"]  \arrow[d,  "\id_{C(\g)}\otimes \phi"]  && \secondC(\g,\g) \arrow[d, "\id_{C(\g)}\otimes \underline{\phi}"] \\
     \secondC(\g,{M^\prime}) \ar[rr, "\id_{C(\g)} \otimes \underline{\phi}^{\vee} f^\prime"] && \secondC(\g,\g^\prime).
   \end{tikzcd}
  \end{equation*}
Thus, we can examine the right hand side of Equation~\eqref{Eq: strict morphism in functorialy}:
\begin{align*}
  \mu_n^\prime(\phi(m_1[1]),\cdots, &\phi(m_n[1])) = (\underline{\phi}^\vee \otimes \id_{M^\prime})(\lambda_n^\prime(\phi(m_1),\cdots, \phi(m_n))) \\
  &= (-1)^{\abs{m_1}+\cdots+\abs{m_{n-1}}-n+1} (\underline{\phi}^\vee \otimes \id_{M^\prime})(\iota_{f^\prime(\phi(m_1))[1]} \cdots \iota_{f^\prime(\phi(m_{n-1}))[1]}  d_{\tot}^{M^\prime}\phi(m_{n})[1]) \\
  &= (-1)^{\abs{m_1}+\cdots+\abs{m_{n-1}}-n+1}\iota_{\underline{\phi}^{\vee} f^\prime (\phi(m_1))[1]} \cdots \iota_{\underline{\phi}^{\vee} f^\prime (\phi(m_{n-1}))[1]} d_{\tot}^{M^\prime}\phi(m_{n})[1] \\
  &=  (-1)^{\abs{m_1}+\cdots+\abs{m_{n-1}}-n+1} \iota_{\underline{\phi}(f(m_1))[1]} \cdots \iota_{\underline{\phi}(f(m_{n-1}))[1]} d_{\tot}^{M^\prime} \phi(m_{n})[1].
\end{align*}
Hence, Equation~\eqref{Eq: strict morphism in functorialy} holds for all $n \geqslant 2$ as well.
\end{proof}

\begin{Rem}\label{Rem:furtherthoughts}
  As a consequence of Theorem~\ref{Thm:wLP to HLA} and Proposition~\ref{prop: functorial wrt g}, we obtain a functor $\mathcal{\G}^\infty$ from the category $\mathbf{wLP}$ of weak Loday-Pirashvili modules to the category $\mathbf{Leib}^{\infty[1]}$ of Leibniz$_\infty[1]$ algebras, if we forget about the dg module structures over dg Lie algebras. The functor $\mathcal{G}^\infty$ can be viewed as a homotopy lifting of the construction of Leibniz algebras out of \LP modules (i.e. equivariant linear maps)   in~\cite{LP}. It is natural to expect that there is a left adjoint functor $\mathcal{F}^\infty$ of $\mathcal{G}^\infty$ such that the associated adjunction is a homotopy lifting of the one~\eqref{Eq: Rep to Leib functor}. This question will be investigated in the future.
\end{Rem}

\section{Leibniz infinity algebra structures via rooted trees}\label{Sec:rootedtree}
This section is an addition to the previous one, so we inherit all the earlier symbols and conventions. Before introducing the main result, we need to recall some basic terminology about trees.
\subsection{Rooted trees with {\MO}s}
A directed graph $\Gamma$  is a pair $({\mathrm{V}(\Gamma)},{\mathrm{E}(\Gamma)})$, where ${\mathrm{V}(\Gamma)}$, called the set of vertices, is a non-empty finite set, and ${\mathrm{E}(\Gamma)}$, called the set of edges, is a set of ordered pairs of elements of ${\mathrm{V}(\Gamma)}$.
A directed tree is a connected directed graph without cycles. The valency of a vertex $v$ in a directed tree, denoted by $\doubleabs{v}$, is the  number of edges \textit{pointing to} $v$.  {Note that our definition of valency   is slightly different from that in~\cite{DR}. }

In what follows, we assume that $T$ is a  rooted tree (also known as a planted tree), i.e.,   a directed tree   whose set ${\mathrm{V}(T)}$ of vertices admits a distinguished element $v_R \in {\mathrm{V}(T)}$ of valency $1$, called the \emph{root vertex}, such that   $T$ is oriented toward   $v_R$.
The edge adjacent to $v_R$ is called the \emph{root edge}. Vertices of valency $0$ are called \emph{leaves} (or tails). A vertex is called \emph{internal} if it is neither a root nor a leaf. We use the symbols  ${\mathrm{Int}(T)}$ and ${\mathrm{Leaf}(T)}$ to denote the sets of internal vertices and    leaves, respectively.

Each non-root vertex $v$ of $T$ admits a unique edge initiating from $v$. Denote by $\mathrm{Next}(v)$ the end vertex of this edge. There is a unique path $p_v$ connecting $v$ to the root vertex $v_R$.
Call the number of edges in this path the height of $v$, and denote it by $h_v$.
For the root vertex $v_R$, set $\mathrm{Next}(v_R)=v_R$ and $h_{v_R}=0$. Meanwhile, we can define a partial order $\prec$ on the set ${\mathrm{V}(T)}$: $v \prec v'$ for any vertex $v'$ in $p_v\setminus \{v\}$.

 We define the height $h(T)$ of $T$ to be the maximum of heights of all vertices of $T$, i.e.,  $h(T) = \max\{h_v \mid v \in {\mathrm{V}(T)}\}$.
Assume that   $h(T)$  is $k+1$. We can   decompose   the set of vertices
\begin{align*}
 {\mathrm{V}(T)} &= \sqcup_{j = 0}^{k+1} {\mathrm{V}^j(T)},
\end{align*}
where ${\mathrm{V}^j(T)}$ consists of vertices of height $j$. Both ${\mathrm{V}^0(T)} = \{v_R\}$ and ${\mathrm{V}^1(T)} = \{\mathrm{Next}^{-1}(v_R)\}$ are single-element sets, and   ${\mathrm{V}^{k+1}(T)}$ is a subset of ${\mathrm{Leaf}(T)}$.
We also have a decomposition of   ${\mathrm{Int}(T)}$ of internal vertices
\[
  {\mathrm{Int}(T)} = \sqcup_{j = 1}^{k} {\mathrm{Int}^{j}(T)} := \sqcup_{j = 1}^{k} {\mathrm{V}^j(T)} \cap {\mathrm{Int}(T)}.
\]

Let $\RT(n)$ be the set of rooted trees with $n$ non-root vertices. Those rooted trees with a particular type of labelling  on vertices are what we concern.
\begin{Def}
A \textbf{\MO}  on a rooted tree $T \in \RT(n)$ is  a  bijection
\[
  l \colon {\mathrm{V}(T)} \setminus \{v_R\} \to  \{1,2,\cdots,n\}
\]
which preserves the partial order $\prec$,  i.e.,   $v_1 \prec v_2$ implies   $l(v_1) < l(v_2)$.
The collection of {\MO}s of a rooted tree $T$ is denoted by $\Oder(T)$.

Two {\MO}s $l$ and $l^\prime$ on $T$ are said to be equivalent, if there exists an automorphism $\sigma: {\mathrm{V}(T)} \to {\mathrm{V}(T)}$ of the set ${\mathrm{V}(T)}$ of vertices satisfying the following conditions:
\begin{compactenum}
  \item ($\mathrm{Next}$-compatibility) $\sigma(\mathrm{Next}(v)) = \mathrm{Next}(\sigma(v))$ for all $v \in {\mathrm{V}(T)}$;
 \item (Labelling transfer) $l^\prime(v) = l(\sigma(v))$ for all $v \in {\mathrm{V}(T)} \setminus \{v_R\}$.

\end{compactenum}
Denote by $[\Oder(T)]$ the equivalence classes of {\MO}s on $T$.
\end{Def}
\begin{Ex}\label{Ex:T1T2}
As an example, below is a tree $T_1\in  \RT(4) $ with two equivalent {\MO}s:
\[
  \begin{tikzpicture}[xscale=4,yscale=4]
   \draw[red,fill=green] (1.7,5) circle [radius=0.05];
  \draw[ultra thick, ->, black]  (1.47,5.13) -- (1.67,5.04);
   \draw[red,fill=green] (1.42,5.15) circle [radius=0.05];
   \draw[ultra thick, ->, black] (1.47,4.87) -- (1.67,4.96);
   \draw[red,fill=green] (1.42,4.85) circle [radius=0.05];
   \draw[ultra thick,->,black] (1.75,5) -- (1.95,5);
   \draw[red,fill=green] (2.0,5) circle [radius=0.05];
   \draw[red,fill=green] (2.3,5) circle [radius=0.05];
   \draw[ultra thick, ->, black] (2.05,5) -- (2.25,5);
   \node[below] at (2,4.7) {$l$};
   \node[above] at (1.42,5.2) {$1$};
   \node[below] at (1.42,4.8) {$2$};
   \node[below] at (1.7,4.95) {$3$};
   \node[above] at (1, 4.93) {$T_1\colon $};
   \node[below] at (2.0,4.95) {$4$};
   \node[below] at (2.3,4.95) {$v_R$};

   \draw[red,fill=green] (3.0,5) circle [radius=0.05];
  \draw[ultra thick, ->, black] (2.77,5.13) -- (2.97,5.04);
   \draw[red,fill=green] (2.72,5.15) circle [radius=0.05];
   \draw[ultra thick, ->, black] (2.77,4.87) -- (2.97,4.96);
   \draw[red,fill=green] (2.72,4.85) circle [radius=0.05];
   \draw[ultra thick,->,black] (3.05,5) -- (3.25,5);
   \draw[red,fill=green] (3.3,5) circle [radius=0.05];
   \draw[red,fill=green] (3.6,5) circle [radius=0.05];
   \draw[ultra thick, ->, black] (3.35,5) -- (3.55,5);
    \node[below] at (3.3,4.7) {$l^\prime$};
   \node[above] at (2.72,5.2) {$2$};
   \node[below] at (2.72,4.8) {$1$};
   \node[below] at (3.0,4.95) {$3$};
   \node[below] at (3.3,4.95) {$4$};
   \node[below] at (3.6,4.95) {$v_R$.};
\end{tikzpicture}
\]

Here is another tree  $T_2\in  \RT(4) $ with two   {\MO}s which are   not equivalent: 
\[
\begin{tikzpicture}[xscale=4,yscale=4]
  \draw[red,fill=green] (1.4,5) circle [radius=0.05];
  \draw[ultra thick, ->, black] (1.45,5) -- (1.65,5);
  \draw[red,fill=green] (1.7,5) circle [radius=0.05];
  \draw[ultra thick,->,black] (1.75,5) -- (1.95,5);
    \draw[red,fill=green] (2.0,5.4) circle [radius=0.05];
  \draw[ultra thick, ->, black] (2.0,5.35) -- (2.0,5.05);
  \draw[red,fill=green] (2.0,5) circle [radius=0.05];
  \draw[ultra thick, ->, black] (2.05,5) -- (2.25,5);
  \draw[red,fill=green] (2.3,5) circle [radius=0.05];
   \node[above] at (1.85,4.7) {$l''$};
  \node[below] at (1.4,4.95) {$1$};
   \node[below] at (1.7,4.95) {$2$};
   \node[above] at (2.0,5.45) {$3$};
   \node[above] at (1, 4.93) {$T_2\colon $};
   \node[below] at (2.0,4.95) {$4$};
   \node[below] at (2.3,4.95) {$v_R$};

  \draw[red,fill=green] (2.7,5) circle [radius=0.05];
  \draw[ultra thick, ->, black] (2.75,5) -- (2.95,5);
  \draw[red,fill=green] (3.0,5) circle [radius=0.05];
  \draw[ultra thick, ->, black] (3.05,5) -- (3.25,5);
  \draw[red,fill=green] (3.3,5.4) circle [radius=0.05];
  \draw[ultra thick, ->, black] (3.3,5.35) -- (3.3,5.05);
  \draw[red,fill=green] (3.3,5) circle [radius=0.05];
  \draw[ultra thick,->,black] (3.35,5) -- (3.55,5);
  \draw[red,fill=green] (3.6,5) circle [radius=0.05];
  \node[above] at (3.3,5.45) {$2$};
  \node[above] at (3.2,4.7)  {$l'''$};
  \node[below] at (2.7,4.95) {$1$};
  \node[below] at (3.0,4.95) {$3$};
 \node[below] at (3.3,4.95) {$4$};
  \node[below] at (3.6,4.95) {$v_R$.};
\end{tikzpicture}
\]

\end{Ex}

\subsection{Structure maps in terms of rooted trees}

We need the generalized Nijenhuis-Richardson product\footnote{For the original Richardson-Nijenhuis product, see \cites{NR, LMS}.}
\[
\bullet_n \colon    \big(C^{q_1}(\g,\g  )\otimes\cdots \otimes C^{q_n}(\g,\g  )  \big)\times C^{p}(\g,N  ) \to C^{q_1+\cdots+q_n+p-n}(\g,N  )[-n]
\]
(for all $p \geqslant n \geqslant 1$) which is  defined   explicitly by
\[
\big((\omega_1\otimes x_1 ),\ldots,(\omega_n\otimes x_n )\big)\bullet_n  (\alpha\otimes w )
:=\pm (\omega_1\odot\cdots\odot\omega_n)\odot (\iota_{  x_1[1]}\cdots \iota_{ x_n[1]}  \alpha)  \otimes w ,
\]
where $\omega_i\in C^{q_i}(\g)$, $\alpha\in C^p(\g)$, $x_i\in \g$, $w\in N$. The sign $\pm$ is determined by the following rule --- Switching the positions of $x_i$ and $\omega_j$ produces a sign $(-1)^{(\abs{x_i}-1)\abs{\omega_j}}$.

Given a weak {\LP module} $f \colon M \rightsquigarrow \g$, we have a  $C(\g)$-module morphism $$F=\secondC(\g,f) \colon \secondC(\g,M) \to \secondC(\g,\g).$$
We now explain how to associate a multi-linear map $$\Theta_T^l \colon  M ^{\otimes n} \to C(\g,\g)[1-n]$$ with a rooted tree $T \in \RT(n)$ and  a {\MO} $l$ on $T$ --- Suppose that the height $h(T) = k+1$.
Follow the procedure below to find     $\Theta_T^l(m_1,\cdots, m_n)$ for $m_1,\cdots, m_n \in M$:
\begin{compactenum}
  \item Label each non-root vertex $v \in {\mathrm{V}(T)} \setminus \{v_R\}$ by the element $m_{l(v)}$.
  \item For every leaf $v_t \in {\mathrm{Leaf}(T)}$, replace its label  with $L(v_t) = {F}(m_{l(v_t)}) \in C(\g,\g)$.
  \item Replace labels on internal vertices of height greater than $1$ with elements in $C(\g,\g)$ inductively on the heights of vertices in a decreasing order:
  \begin{itemize}
   \item For each $v_k \in {\mathrm{Int}^{k}(T)}$, suppose  that $\mathrm{Next}^{-1}(v_k)$ is composed of vertices   $v_{k+1}^1,\cdots, v_{k+1}^{\doubleabs{v_k}}$  which are leaves of $T$. We replace the label on $v_k$ with
       \[
      L(v_k) :=  \left( L(v_{k+1}^1), \cdots, L(v_{k+1}^{\doubleabs{v_k}}) \right) \bullet_{\doubleabs{v_k}} {F}(m_{l(v_k)})  \in C(\g,\g).
       \]
         \item Assume that all internal vertices   $v_j \in {\mathrm{Int}^{j}(T)}$ for some $3 \leqslant j \leqslant k$ have been relabelled by $L(v_j) \in C(\g,\g)$. Then for an internal vertex $v_{j-1} \in {\mathrm{Int}^{j-1}(T)}$ of height $j-1$ such that $\mathrm{Next}^{-1}(v_{j-1}) = \{v_j^1,\cdots,v_j^{\doubleabs{v_{j-1}}}\} \subset {\mathrm{V}^{j}(T)}$,   relabel   $v_{j-1}$ by
       \[
          L(v_{j-1}):=  \left(L(v_j^1), \cdots, L(v_j^{\doubleabs{v_{j-1}}}) \right) \bullet_{\doubleabs{v_{j-1}}} {F}(m_{l(v_{j-1})}) \in C(\g,\g).
       \]
   \end{itemize}
\item  Finally, relabel the unique internal vertex   $v_1 \in {\mathrm{Int}^{1}(T)}$ by
    \begin{equation}\label{Eq: PhiTl}
     \Theta_T^l(m_1,\cdots, m_n) = L(v_1) :=  \left(L(v_2^1), \cdots, L(v_2^{\doubleabs{v_{1}}})\right)\bullet_{\doubleabs{v_{1}}} {F}(m_{l(v_1)})  \in C(\g,\g) ,
    \end{equation}
    where $\{v_2^1,\cdots, v_2^{\doubleabs{v_{1}}}\} = \mathrm{Next}^{-1}(v_1) \subset {\mathrm{V}^{2}(T)}$. It is not hard to see that the degree of $\Theta_T^l(m_1,\cdots, m_n)$ is $\abs{m_1}+\cdots+\abs{m_n}+1-n$.
\end{compactenum}
\begin{Ex}
Consider the tree $T_1$ with the {\MO} $l$ as in Example \ref{Ex:T1T2}. The formula \eqref{Eq: PhiTl} of the multi-linear map $ \Theta_{T_1}^{l}$ now reads:
\[
 \Theta_{T_1}^{l}(m_1,m_2,m_3,m_4) = \left((F(m_{1}), F(m_2)) \bullet_2 F(m_3)\right) \bullet_1 F(m_4).
\]
If we use the {\MO} $l^\prime$ on $T_1$, we obtain the same result.
When we use the tree $T_2$  with {\MO} $l^{\prime\prime}$, the associated multi-linear map is given by
\[
\Theta_{T_2}^{l^{\prime\prime}}(m_1,m_2,m_3,m_4) = \left(F(m_{1}) \bullet_1 F(m_2), F(m_3)\right)  \bullet_2 F(m_4);
\]
If we are working with $T_2$ and {\MO} $l^{\prime\prime\prime}$, then we have
\[
\Theta_{T_2}^{l^{\prime\prime\prime}}(m_1,m_2,m_3,m_4) = \left(F(m_{1}) \bullet_1 F(m_3), F(m_2)\right)  \bullet_2 F(m_4).
\]	
\end{Ex}
The following lemma is a direct consequence of our previous construction.
\begin{lem}\label{Lem: Invariance for equiv ordering }
  For any $T \in \RT(n)$ and {\MO} $l$ of~ ~$\,T$, the map $\Theta_T^l$: $\otimes^n M \to C(\g,\g)[1-n]$ defined in Equation~\eqref{Eq: PhiTl} only depends on the equivalence class of $l$.
\end{lem}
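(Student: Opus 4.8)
The plan is to reduce the statement to a single recursive identity and then prove that identity by induction on the tree. Suppose $l$ and $l^\prime$ are equivalent {\MO}s on $T \in \RT(n)$, witnessed by an automorphism $\sigma \colon \mathrm{V}(T) \to \mathrm{V}(T)$ that is $\mathrm{Next}$-compatible and satisfies $l^\prime = l \circ \sigma$. First I would record the structural consequences of $\mathrm{Next}$-compatibility that I need. Since $v_R$ is the unique fixed point of $\mathrm{Next}$ and $\sigma$ commutes with $\mathrm{Next}$, the map $\sigma$ fixes $v_R$, and then $\sigma(\mathrm{Next}^{-1}(v_R)) = \mathrm{Next}^{-1}(v_R)$ forces $\sigma$ to fix the unique height-$1$ vertex $v_1 = \mathrm{Next}^{-1}(v_R)$ as well. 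More generally, $\sigma$ restricts for every $v$ to a bijection $\mathrm{Next}^{-1}(v) \to \mathrm{Next}^{-1}(\sigma(v))$ of children, so it preserves $\mathrm{Leaf}(T)$ and carries each sibling subtree isomorphically onto another.

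Next I would package the construction of \eqref{Eq: PhiTl} into a vertex-indexed partial evaluation. For an arbitrary assignment $g \colon \mathrm{V}(T)\setminus\{v_R\} \to M$ of module elements to the non-root vertices, define $L_g(v) \in C(\g,\g)$ by downward recursion on the height: set $L_g(v) = F(g(v))$ if $v$ is a leaf, and $L_g(v) = \big(L_g(c_1),\cdots,L_g(c_r)\big) \bullet_{r} F(g(v))$ if $v$ is internal with children $c_1,\cdots,c_r = \mathrm{Next}^{-1}(v)$ taken in their intrinsic order. Writing $g_l(v) := m_{l(v)}$, one has $\Theta_T^l(m_1,\cdots,m_n) = L_{g_l}(v_1)$ by \eqref{Eq: PhiTl}, and since $g_{l^\prime}(v) = m_{l(\sigma(v))} = g_l(\sigma(v))$, also $\Theta_T^{l^\prime}(m_1,\cdots,m_n) = L_{g_l \circ \sigma}(v_1)$. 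Thus the whole lemma reduces to the claim
\[
L_{g\circ\sigma}(v) = L_g(\sigma(v)) \qquad \text{for all } v \in \mathrm{V}(T),
\]
evaluated at $v = v_1$: granting it, $\Theta_T^{l^\prime} = L_{g_l\circ\sigma}(v_1) = L_{g_l}(\sigma(v_1)) = L_{g_l}(v_1) = \Theta_T^l$, since $\sigma(v_1) = v_1$.

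I would then prove the displayed identity by induction on height, descending from the leaves. The base case is immediate because $\sigma$ sends leaves to leaves, whence $L_{g\circ\sigma}(v) = F(g(\sigma(v))) = L_g(\sigma(v))$. For the inductive step at an internal vertex $v$ with children $c_1,\cdots,c_r$, the induction hypothesis gives $L_{g\circ\sigma}(c_i) = L_g(\sigma(c_i))$, while $\mathrm{Next}$-compatibility identifies $\{\sigma(c_1),\cdots,\sigma(c_r)\}$ with $\mathrm{Next}^{-1}(\sigma(v))$. Hence the tuple feeding $\bullet_r$ in $L_{g\circ\sigma}(v)$ and the one feeding it in $L_g(\sigma(v))$ differ only by the permutation $\tau$ relating the order $(\sigma(c_i))_i$ to the intrinsic order of $\mathrm{Next}^{-1}(\sigma(v))$. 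The step therefore comes down to the behaviour of the generalized Nijenhuis--Richardson product $\bullet_r$ under permuting its first $r$ slots, which I would extract from its defining sign rule: the factors $\omega_i \in C(\g)$ multiply in the graded-commutative algebra $C(\g)$ and the contractions $\iota_{x_i[1]}$ graded-commute, so $\bullet_r$ is symmetric in its arguments relative to the degree shift carried by the rule. Applying this to $\tau$ converts the one tuple into the other and yields $L_{g\circ\sigma}(v) = L_g(\sigma(v))$, completing the induction.

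The delicate point, and what I expect to be the main obstacle, is exactly the sign bookkeeping in that last step: one must verify that the reordering $\tau$ induced by $\sigma$ contributes no net factor once the shift grading on the inputs $L_g(\sigma(c_i))$ is taken into account, so that the symmetry of $\bullet_r$ precisely cancels it. I would make this manageable by first reducing $\sigma$ to a product of elementary automorphisms, each transposing two sibling subtrees of equal shape at a single vertex --- the automorphism group of a rooted tree being generated by such swaps --- which localizes the entire verification to the two-argument symmetry of $\bullet$, exactly the situation illustrated for $T_1$ in Example~\ref{Ex:T1T2}. Checking that these elementary swaps act trivially, with careful attention to the interaction between the contraction degrees and the shift, is the technical heart of the argument.
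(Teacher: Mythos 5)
Your reduction of the lemma to the recursive identity $L_{g\circ\sigma}(v) = L_g(\sigma(v))$, together with the structural consequences you extract from $\mathrm{Next}$-compatibility (that $\sigma$ fixes the root and the unique height-one vertex, preserves leaves, and maps the children of $v$ bijectively onto the children of $\sigma(v)$), is correct, and this skeleton is in effect all that the paper itself supplies: its ``proof'' is the single sentence that the lemma is a direct consequence of the construction. The genuine gap is precisely the step you flag as the technical heart and then defer: the claim that the permutation $\tau$ relating $(\sigma(c_1),\dots,\sigma(c_r))$ to the intrinsic ordering of $\mathrm{Next}^{-1}(\sigma(v))$ contributes no net factor. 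That claim cannot be verified, because it is false. Writing $a_i = \omega_i \otimes x_i$ and computing directly from the paper's sign rule, a transposition of two slots of the generalized Nijenhuis--Richardson product picks up $(-1)^{\abs{\omega_1}\abs{\omega_2}}$ from graded commutativity of $C(\g)$, $(-1)^{(\abs{x_1}-1)(\abs{x_2}-1)}$ from graded commutation of the contractions $\iota_{x_i[1]}$, and the discrepancy of the two defining signs; these combine to
\[
(a_2,a_1)\bullet_2 c \;=\; (-1)^{(\abs{a_1}-1)(\abs{a_2}-1)}\,(a_1,a_2)\bullet_2 c ,
\]
and in general permuting the first $r$ slots of $\bullet_r$ multiplies the result by the Koszul sign of the permutation computed for the \emph{shifted} degrees $\abs{a_i}-1$. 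So $\bullet_r$ is graded symmetric for the shifted grading, not symmetric, and your induction closes only up to this Koszul sign: it yields $L_{g\circ\sigma}(v) = \pm L_g(\sigma(v))$, with the sign equal to the Koszul sign (for degrees $\abs{m_i}-1$) of the relabeling induced by $\sigma$.

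This is not a removable bookkeeping issue, and your proposed reduction to elementary swaps of isomorphic sibling subtrees localizes it without curing it, since each elementary swap contributes exactly one such factor. Concretely, for the tree $T_1$ of Example~\ref{Ex:T1T2} with the two equivalent labelings $l$, $l'$ exchanging the leaves, the construction gives
\[
\Theta_{T_1}^{l'}(m_1,m_2,m_3,m_4) \;=\; (-1)^{(\abs{m_1}-1)(\abs{m_2}-1)}\,\Theta_{T_1}^{l}(m_1,m_2,m_3,m_4),
\]
and the sign is $-1$ whenever $\abs{m_1}$ and $\abs{m_2}$ are both even --- already in the most classical situation of degree-zero elements, where it is just $\iota_{y_1}\iota_{y_2}\alpha = -\iota_{y_2}\iota_{y_1}\alpha$ for a weight-two cochain $\alpha$. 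What your argument actually establishes is therefore a sign-decorated invariance statement, not the literal equality $\Theta_T^l = \Theta_T^{l'}$ asserted by the lemma (which is what is needed for the one-term-per-class sum defining $\Theta_n$ to be well posed). To complete a proof one must either incorporate these Koszul signs into the definition of $\Theta_T^l$ or into the notion of equivalence of {\MO}s, or otherwise account for them; as the proposal stands, the decisive step is not a postponed verification but a step that fails.
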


Let
\[
[\mathbf{MLRT}(n)] = \{(T,l) \mid T \in \RT(n), l \in [\Oder(T)]\}
\]
be the set of equivalent monotonic labelled rooted trees with $n$ non-root vertices.
We define a multi-$C(\g)$-linear map
\[
 \Theta_n\colon \otimes_{C(\g)}^{n} C(\g,M) \to C(\g,\g)[1-n]
\]
by
\begin{equation*}\label{Eq: Def of PhiT}
 \Theta_n(m_1,\cdots,m_n):= \sum_{(T,l) \in [\mathbf{MLRT}(n)]} \Theta_T^l(m_1,\cdots,m_n),
\end{equation*}
for all $m_1,\cdots, m_n \in M$.

Our main result is the following fact.
\begin{prop}\label{Prop: Leibniz infty via OPT}
  Let $f \colon M \rightsquigarrow \g$ be a weak {\LP module}. Then the higher structure maps $\{\lambda_{n+1}\}_{n \geqslant 1}$ of the Leibniz$_\infty[1]$ $C(\g)$-algebra structure on $\secondC(\g,M[1])$ are subject to  the following formula:  for all $m_1,\cdots, m_{n+1} \in M$,
  \begin{align}\nonumber
   &\quad \lambda_{n+1}(m_1[1],\cdots,m_{n+1}[1]) \\
   \label{align:secondlambdak}
   &= \sum_{k=1}^n\sum_{n_1 + \cdots n_k =n}\sum_{\sigma\in \sh(n_1,\cdots,n_k)} \frac{\epsilon(\sigma)}{k!}  \Bigl( \Theta_{n_1} , \Theta_{n_2} , \cdots,    \Theta_{n_k} \Bigr)~ \bullet_{k} ~ (d_{\tot}^{M}m_{n+1})[1] ,
  \end{align}
  where $ \Theta_{n_1}$ stands for $\Theta_{n_1}(m_{\sigma(1)},\cdots, m_{\sigma(n_1)})$, $ \Theta_{n_2}$ for $\Theta_{n_2}(m_{\sigma(n_1+1)},\cdots, m_{\sigma(n_1+n_2)})$, $\cdots$,     and $\Theta_{n_k}$ for $\Theta_{n_k}(m_{\sigma(n-n_k+1)}, \cdots, m_{\sigma(n)})$.
\end{prop}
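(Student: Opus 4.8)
The plan is to prove \eqref{align:secondlambdak} by induction on $n$, using the recursive presentation of the higher brackets. Recall from \eqref{align:firstlambdan} that for $n\geqslant 1$ the bracket $\lambda_{n+1}$ is the $C(\g)$-multilinear extension of $\mathcal{R}^\nabla_{n+1}$, with $\mathcal{R}^\nabla_{n+1}(m_1[1],\cdots,m_{n+1}[1]) = (-1)^{\abs{m_1}-1}\nabla_{m_1[1]}\bigl(\mathcal{R}^\nabla_{n}(m_2[1],\cdots,m_{n+1}[1])\bigr)$, and that $\nabla_{m_1[1]}$ acts on $C(\g,M[1])$ as contraction by $F(m_1)=f(m_1)\in C(\g,\g)$, namely $\alpha\otimes w\mapsto(\iota_{f(m_1)[1]}\alpha)\otimes w$. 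The essential point is that this contraction is a graded derivation with respect to the symmetric product $\odot$ on $C(\g)$. Writing $D:=(d_{\tot}^{M}m_{n+1})[1]$, the task is thus to expand the iterated contraction $\iota_{f(m_1)[1]}\cdots\iota_{f(m_n)[1]}D$ and to recognise each resulting term as the contribution $\Theta_T^l$ of a monotonic labelled rooted tree, as defined in \eqref{Eq: PhiTl}.

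First I would dispatch the base case $n=1$: the set $[\mathbf{MLRT}(1)]$ consists of a single tree, for which $\Theta_1=F$, so the right-hand side of \eqref{align:secondlambdak} reads $\Theta_1(m_1)\bullet_1 D=\iota_{f(m_1)[1]}D$, matching $\lambda_2$ up to the sign recorded in \eqref{align:firstlambdan}; here one notes that the weight-$0$ parts of $d_{\tot}^{M}m_2$ (the $d_{\CE}$ and $d_0^M$ terms) are killed by the contraction, so $d_{\tot}^M$ may freely replace $\sum_{k\geqslant 1}d_k^M$. The inductive step rests on a grafting lemma describing $\nabla_{m_1[1]}$ applied to a single summand $(\Theta_{n_1},\cdots,\Theta_{n_k})\bullet_k D$ of $\lambda_n(m_2[1],\cdots,m_{n+1}[1])$. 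By the Leibniz rule the vector part of $f(m_1)$ is contracted into exactly one form factor of this expression, yielding two families: contraction into the leftover form of $D$ promotes $\bullet_k$ to $\bullet_{k+1}$ with the new argument $F(m_1)=\Theta_1(m_1)$, spawning a fresh singleton subtree at the root; contraction into the form of some $\Theta_{n_j}$ grafts a new leaf labelled $m_1$ inside the $j$-th subtree, which by the inductive shape of \eqref{Eq: PhiTl} turns $\Theta_{n_j}$ into the corresponding $\Theta_{n_j+1}$ summands.

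The combinatorial identification then goes as follows. Because $m_1$ is the outermost contraction, it carries the smallest index and is always adjoined as a leaf; since descendants must have smaller labels than their ancestors under $\prec$, every tree so produced is automatically a legitimate monotonic labelling, and conversely each equivalence class of the relevant labelled forest on $\{m_1,\cdots,m_n\}$ feeding into $D$ is obtained exactly once --- at which point Lemma~\ref{Lem: Invariance for equiv ordering} lets one pass to $[\mathbf{MLRT}(n_j)]$. The prefactor $\tfrac{1}{k!}$ together with the shuffle sum $\sum_{\sigma}\epsilon(\sigma)$ is reproduced because the $k$ top-level subtrees all feed into $D$ through the graded-commutative product $\odot$: permuting them gives equal contributions up to the Koszul sign $\epsilon(\sigma)$, so that the ordered sum over compositions $n_1+\cdots+n_k=n$ weighted by $1/k!$ correctly counts unordered families of subtrees, while the shuffles distribute $m_1,\cdots,m_n$ among them.

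The principal obstacle will be the sign bookkeeping, which must reconcile three sources: the prefactors $(-1)^{\abs{m_1}+\cdots+\abs{m_{n-1}}-n+1}$ of \eqref{align:firstlambdan}, the internal signs of the Nijenhuis-Richardson products $\bullet_k$ (a factor $(-1)^{(\abs{x_i}-1)\abs{\omega_j}}$ for each interchange of a vector $x_i$ with a form $\omega_j$), and the Koszul signs $\epsilon(\sigma)$ of the shuffles. I expect the cleanest route is to absorb the global prefactor into the shift $[1]$, so that $\nabla_{m_1[1]}$ becomes an honest graded derivation on $C(\g,M[1])$; the Leibniz distribution then emits precisely $\epsilon(\sigma)$ upon reordering the subtrees into shuffle order. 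Verifying this sign identity at each grafting step, and confirming that the multiplicity of each labelled-tree class is exactly cancelled by $1/k!$, is the only genuinely delicate part of the argument; everything else follows routinely once the grafting lemma is in place.
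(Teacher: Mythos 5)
Your proposal takes essentially the same route as the paper's own proof: induction on $n$ using the recursion $\lambda_{n+2}(m_1[1],\cdots,m_{n+2}[1]) = (-1)^{\abs{m_1}-1}\iota_{f(m_1)[1]}\lambda_{n+1}(m_2[1],\cdots,m_{n+2}[1])$ from \eqref{align:firstlambdan}, with the contraction interpreted as either grafting a new leaf labelled by $m_1$ onto one of the trees underlying $\Theta_{n_1},\cdots,\Theta_{n_k}$ or adjoining the singleton tree of $\RT(1)$ as a new top-level factor. If anything, your outline is more detailed than the published proof, which treats the base case as obvious and does not spell out the sign bookkeeping or the monotonic-labelling bijection that you correctly identify as the delicate points.
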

\begin{proof}
We proceed by induction on $n$.
The Equation~\eqref{align:secondlambdak} holds obviously for the initial case $n = 1$.
Suppose that it holds for some integer $n \geqslant 1$. We consider the $n+1$ case. Using the induction assumption and Equation~\eqref{align:firstlambdan}, we have
\begin{align*}
  &\lambda_{n+2}(m_1[1],\cdots,m_{n+2}[1]) = (-1)^{\abs{m_1}-1}\iota_{f(m_1)[1]} \lambda_{n+1}(m_2[1],\cdots,m_{n+2}[1]) \\
&= \sum_{k=1}^n\sum_{n_1 + \cdots n_k =n}\sum_{\sigma\in \sh(n_1,\cdots,n_k)} \frac{\epsilon(\sigma)}{k!}  (-1)^{\abs{m_1}-1}\iota_{f(m_1)[1]} \Bigl( \Theta_{n_1},\Theta_{n_2}, \cdots, \Theta_{n_k}\Bigr) \bullet_{k} \left(d_{\tot}^{M}m_{n+2}\right)[1].
\end{align*}
Note that the contraction $\iota_{f(m_1)[1]}$ along the element $f(m_1)[1] \in C(\g,\g[1])$ is taken over all possibilities. Interpreted by the relationship with rooted trees with monotonic labelling, this contraction corresponds to either add an extra leaf labelled by $m_1$ on any one of the rooted trees defining $\Theta_{n_1}, \cdots, \Theta_{n_k}$, or add a new term $\Theta_{n_{k+1}}$ contributed by the tree $T \in \RT(1)$ with its only non-root vertex labelled by $m_1$. Hence, we obtain
 \begin{align*}
   &\quad \lambda_{n+2}(m_1[1],\cdots,m_{n+2}[1]) \\
   &= \sum_{k=1}^n\sum_{n_1 + \cdots n_k =n+1}\sum_{\sigma\in \sh(n_1,\cdots,n_k)} \frac{\epsilon(\sigma)}{k!}  \Bigl( \Theta_{n_1} , \Theta_{n_2} , \cdots,    \Theta_{n_k} \Bigr) \bullet_{k} (d_{\tot}^{M}m_{n+2})[1],
  \end{align*}
  as desired.
\end{proof}

\subsection{Examples}
\begin{Ex}
Consider the weak {\LP module} $f\colon M:= \g[1-k] \otimes \g \rightsquigarrow \g$ arising from a degree $k$ $2$-cocycle $\alpha\in Z^2(\g)$ as described in Example~\ref{Ex: pairing}.
We now write explicitly the Leibniz$_\infty[1]$ algebra structure $\{\lambda_n\}_{n \geqslant 2}$ on $C(\g,M[1])$.

The $C(\g)$-linear extension $F$ of $f$ is defined for all $x[1-k] \otimes y \in \g[1-k] \otimes \g$ by
\[
 F(x[1-k] \otimes y) = x^\sharp \otimes y \in C^1(\g, M),
\]
where $x^\sharp = \iota_{x[1]}\alpha \in \g^\vee[-1]$.
Note that the  $C(\g)$-module morphism $F$ raises the weight by $1$, i.e., $F \colon C^p(\g,M) \to C^{p+1}(\g, M)$. By the construction of the linear map $\Theta_n$ with respect to $F$, there exists only one monotonically labelled planar tree $T$ satisfying $\doubleabs{v} = 1$ for all internal vertices $v \in {\mathrm{Int}(T)}$.
Thus, we have
\[
 \Theta_n(m_1,\cdots, m_n) = \Theta_T^l(m_1,\cdots,m_n) = \prod_{i=1}^{n-1} x_{i+1}^\sharp(y_{i}[1]) x_{1}^\sharp \otimes y_{n},
\]
for all $m_i = (x_{i}[1-k], y_{i}) \in M, 1 \leqslant i \leqslant n$.
The Leibniz$_\infty[1]$ algebra structure on $C(\g, M[1])$
\[
\lambda_n\colon  \otimes^n M[1] \to C^1(\g,M[1])[1]
\]
is defined by
  \begin{align*}
    &\quad \lambda_{n+1}(m_1[1], \cdots, m_{n+1}[1]) \\
    &=\prod_{i=1}^{n-1} x_{i+1}^\sharp(y_{i}[1])x_{1}^\sharp \otimes y_{n} \triangleright (x_{n+1}[1-k], y_{n+1}) \\
    &= \prod_{i=1}^{n-1} x_{i+1}^\sharp(y_{i}[1]) x_{1}^\sharp \otimes \big((-1)^k[y_{n}, x_{n+1}][2-k] \otimes y_{n+1} + (-1)^{\abs{y_n}\abs{x_{n+1}+k-1}} x_{n+1}[1-k] \otimes [y_{n}, y_{n+1}]\big),
  \end{align*}
for all $m_i = (x_{i}[1-k], y_{i}) \in M = \g[1-k] \otimes \g, 1 \leqslant i \leqslant n+1$.
\end{Ex}

\begin{Ex}
Consider the weak {\LP module}  $f  = f_0+f_1 \colon {\bb}[-1] \rightsquigarrow \h$ arising from a Lie pair $(\largeg, \h)$ as in Example~\ref{Ex: Lie pair}. We now find the corresponding Leibniz$_\infty[1]$ algebra structure $\{\lambda_n\}_{n \geqslant 2}$ on $C(\h,\bb)$.

Note   that the $C(\g)$-linear extension $F_0$ of $f_0$ is weight-preserving, while the $C(\g)$-linear extension $F_1$ of $f_1$ raises weight by $1$.
Thus, for each $n\geqslant 2$, there exists only one monotonically labelled planar tree $T \in \RT(n)$ satisfying $\doubleabs{v} = 1$ for all internal vertices $v \in {\mathrm{Int}(T)}$.
More precisely, this map $\Theta_n$ is the $C(\g)$-linear extension of the following two linear maps
\[
\Theta^{(0)}_n \colon \bb[-1]^{\otimes n} \to \g[1-n], \qquad \Theta^{(1)}_n \colon \bb[-1]^{\otimes n} \to C^1(\g, \g)[1-n],
\]
defined respectively by
\[
 \Theta^{(0)}_n(\bar{z_1}[-1],\cdots,\bar{z_n}[-1]) = \pr_\h[\pr_\h([\pr_{\h}([\cdots \pr_\h([f_0(z_1), z_2]_{\largeg}), \cdots]_{\largeg}), z_{n-1}]_{\largeg}),z_n]_{\largeg},
\]
and
\[
	\Theta^{(1)}_n(\bar{z_1}[-1],\cdots,\bar{z_n}[-1])(x[1]) = \pr_\h[\pr_\h([\pr_{\h}([\cdots \pr_\h([\pr_\h([x,z_1]_{\largeg}), z_2]_{\largeg}), \cdots]_{\largeg}), z_{n-1}]_{\largeg}),z_n]_{\largeg},
\]
for all homogeneous $\bar{z_i}[-1] \in \bb[-1]$ and $x \in \h$.
Hence, the higher structure maps $\{\lambda_n\}_{n\geqslant 2}$ are the $C(\g)$-linear extension of the linear maps
\[
 \lambda_n^{(0)} \colon \bb^{\otimes n} \to \bb[1], \qquad  \lambda_n^{(1)} \colon \bb^{\otimes n} \to C^1(\g,\bb)[1],
\]
defined respectively by
\[
 \lambda^{(0)}_n(\bar{z_1},\cdots,\bar{z_n}) =  \pr_{{\largeg}/\h}([\pr_\h([\pr_{\h}([\cdots \pr_\h([f_0(\bar{z_1}), z_2]_{\largeg}), \cdots]_{\largeg}), z_{n-1}]_{\largeg}),z_n]_{\largeg}),
\]
and
\[
	\lambda^{(1)}_n(\bar{z_1},\cdots,\bar{z_n})(x) = \pr_{{\largeg}/\h}([\pr_\h([\pr_{\h}([\cdots \pr_\h([\pr_\h([x,z_1]_{\largeg}), z_2]_{\largeg}), \cdots]_{\largeg}), z_{n-1}]_{\largeg}),z_n]_{\largeg}).
\]
In particular, when $(\largeg, \h)$ is a pair of Lie algebras, the component $\lambda^{(0)}_n$ vanishes and the Leibniz$_\infty[1]$ algebra structure on $C(\g,\bb)$ coincides with the Leibniz$_\infty[1]$ algebra obtained from Chen, Sti\'{e}non and Xu's construction~\cite{CSX} for the Lie algebroid pair $({\largeg},\h)$ over the one-point space.
\end{Ex}

\begin{bibdiv}
	\begin{biblist}

\bib{AP}{article}{
			author={Ammar, M.},
			author={Poncin, N.},
			title={Coalgebraic approach to the Loday infinity category, stem
				differential for $2n$-ary graded and homotopy algebras},
			language={English, with English and French summaries},
			journal={Ann. Inst. Fourier (Grenoble)},
			volume={60},
			date={2010},
			number={1},
			pages={355--387},
			issn={0373-0956},
}

\bib{RUTHAbdCrainic}{article}{
   author={Arias Abad, C.},
   author={Crainic, M.},
   title={Representations up to homotopy of Lie algebroids},
   journal={J. Reine Angew. Math.},
   volume={663},
   date={2012},
   pages={91--126},
   issn={0075-4102},
}

\bib{BarannikovK1998}{article}{
   author={Barannikov, S.},
   author={Kontsevich, M.},
   title={Frobenius manifolds and formality of Lie algebras of polyvector
   fields},
   journal={Internat. Math. Res. Notices},
   date={1998},
   number={4},
   pages={201--215},
   issn={1073-7928},
}
	
\bib{Brown}{article}{
   author={Brown, K. S.},
   title={Abstract homotopy theory and generalized sheaf cohomology},
   journal={Trans. Amer. Math. Soc.},
   volume={186},
   date={1973},
   pages={419--458},
   issn={0002-9947},
}

\bib{BR}{article}{
	author={Budur, N.},
	author={Rubi\'{o}, M.},
	title={L-infinity pairs and applications to singularities},
	journal={Adv. Math.},
	volume={354},
	date={2019},
	pages={106754, 40pp},
	issn={0001-8708},
}
	
\bib{BMnew}{article}{
	author={Budur, N.},
	author={Wang, B.},
	title={Cohomology jump loci of differential graded Lie algebras},
	journal={Compos. Math.},
	volume={151},
	date={2015},
	number={8},
	pages={1499--1528},
	issn={0010-437X},
}

\bib{BM}{article}{
	author={Buijs, U.},
	author={Murillo, A.},
	title={Algebraic models of non-connected spaces and homotopy theory of
		${\Linfty}$ algebras},
	journal={Adv. Math.},
	volume={236},
	date={2013},
	pages={60--91},
	issn={0001-8708},
}

\bib{CalaqueGrivaux}{article}{
		author={Calaque, D.},
		author={Grivaux, J.},
		title={Formal moduli problems and formal derived stacks},
       journal={Panoramas et Synth\`{e}ses},
      volume={55},
      date={2021},
      pages={85--145},
      issn={1272-3835},
		eprint={1802.09556},
}

\bib{CalaquePantevToenVaquieVezzosiJT}{article}{
   author={Calaque, D.},
   author={Pantev, T.},
   author={To\"{e}n, B.},
   author={Vaqui\'{e}, M.},
   author={Vezzosi, G.},
   title={Shifted Poisson structures and deformation quantization},
   journal={J. Topol.},
   volume={10},
   date={2017},
   number={2},
   pages={483--584},
   issn={1753-8416},
}

\bib{Qiaoold}{article}{
		author={Chen,Z.},
		author={Qiao,Y.},
		author={Xiang, M.},
		author={Zhang, T.},
		title={Dg Loday-Pirashvili modules over Lie algebras},
		eprint={2110.11623},
}
	
\bib{CLX}{article}{
   author={Chen, Z.},
   author={Liu, Z.},
   author={Xiang, M.},
   title={Kapranov's construction of sh Leibniz algebras},
   journal={Homology Homotopy Appl.},
   volume={22},
   date={2020},
   number={1},
   pages={141--165},
   issn={1532-0073},
}
		
\bib{CSX}{article}{
	author={Chen, Z.},
	author={Sti{\'e}non, M.},
	author={Xu, P.},
	title={From Atiyah classes to homotopy Leibniz algebras},
	journal={Comm. Math. Phys.},
	volume={341},
	date={2016},
	number={1},
	pages={309--349},
}

\bib{Fontanine-Kapranov}{article}{
	author={Ciocan-Fontanine, I.},
	author={Kapranov, M.},
	title={Derived Quot schemes},
	journal={Ann. Sci. \'{E}cole Norm. Sup. },
	number={4},
	volume={34},
	date={2001},
	number={3},
	pages={403--440},
	issn={0012-9593},
}

\bib{Clauwens}{article}{
	author={Clauwens, F.},
	title={The algebra of rack and quandle cohomology},
	journal={J. Knot Theory Ramifications},
	volume={20},
	date={2011},
	number={11},
	pages={1487--1535},
	issn={0218-2165},
}

\bib{Deligne}{article}{
	author={Deligne, P.},
	title={Letter to Millson},
	note={Available at http://www.math.umd.edu/\textasciitilde millson/papers/deligne.pdf, 1986},
	
}
		
\bib{DR}{article}{
   author={Dolgushev, V. A.},
   author={Rogers, C. L.},
   title={Notes on algebraic operads, graph complexes, and Willwacher's
   construction},
   conference={
      title={Mathematical aspects of quantization},
   },
   book={
      series={Contemp. Math.},
      volume={583},
      publisher={Amer. Math. Soc., Providence, RI},
   },
   date={2012},
   pages={25--145},
}
		
\bib{Dupont}{article}{
   author={Dupont, J. L.},
   title={Simplicial de Rham cohomology and characteristic classes of flat
   bundles},
   journal={Topology},
   volume={15},
   date={1976},
   number={3},
   pages={233--245},
   issn={0040-9383},
}

\bib{DwyerSpalinski}{article}{
   author={Dwyer, W. G.},
   author={Spali\'{n}ski, J.},
   title={Homotopy theories and model categories},
   conference={
      title={Handbook of algebraic topology},
   },
   book={
      publisher={North-Holland, Amsterdam},
   },
   date={1995},
   pages={73--126},
}
		
\bib{FHT}{book}{
   author={F\'{e}lix, Y.},
   author={Halperin, S.},
   author={Thomas, J.-C.},
   title={Rational homotopy theory},
   series={Graduate Texts in Mathematics},
   volume={205},
   publisher={Springer-Verlag, New York},
   date={2001},
   pages={xxxiv+535},
   isbn={0-387-95068-0},
}
	
\bib{Gerstenhaber1964}{article}{
		author={Gerstenhaber, M.},
		title={On the deformation of rings and algebras},
		journal={Ann. of Math. (2)},
		volume={79},
		date={1964},
		pages={59--103},
		issn={0003-486X},
}
			
\bib{Getzler}{article}{
   author={Getzler, E.},
   title={Lie theory for nilpotent $\Linfty$ algebras},
   journal={Ann. of Math. (2)},
   volume={170},
   date={2009},
   number={1},
   pages={271--301},
   issn={0003-486X},
}

\bib{Goldman-Millson}{article}{
   author={Goldman, W. M.},
   author={Millson, J. J.},
   title={The deformation theory of representations of fundamental groups of
   compact K\"{a}hler manifolds},
   journal={Inst. Hautes \'{E}tudes Sci. Publ. Math.},
   number={67},
   date={1988},
   pages={43--96},
   issn={0073-8301},
}

\bib{Hinich}{article}{
   author={Hinich, V.},
   title={DG coalgebras as formal stacks},
   journal={J. Pure Appl. Algebra},
   volume={162},
   date={2001},
   number={2-3},
   pages={209--250},
   issn={0022-4049},
}

\bib{Hirschhorn}{book}{
   author={Hirschhorn, P. S.},
   title={Model categories and their localizations},
   series={Mathematical Surveys and Monographs},
   volume={99},
   publisher={American Mathematical Society, Providence, RI},
   date={2003},
   pages={xvi+457},
   isbn={0-8218-3279-4},
}

\bib{Hovey}{book}{
   author={Hovey, M.},
   title={Model categories},
   series={Mathematical Surveys and Monographs},
   volume={63},
   publisher={American Mathematical Society, Providence, RI},
   date={1999},
   pages={xii+209},
   isbn={0-8218-1359-5},
}

 \bib{KS-1}{article}{
   author={Kontsevich, M.},
   author={Soibelman, Y.},
   title={Deformations of algebras over operads and the Deligne conjecture},
   conference={
      title={Conf\'{e}rence Mosh\'{e} Flato 1999, Vol. I (Dijon)},
   },
   book={
      series={Math. Phys. Stud.},
      volume={21},
      publisher={Kluwer Acad. Publ., Dordrecht},
   },
   date={2000},
   pages={255--307},
}

\bib{KS-2}{book}{
	author={Kontsevich, M.},
	author={Soibelman, Y.},
	title={Deformation Theory},
	note={Unpublished book, available at
     http://www.math.ksu.edu/\textasciitilde soibel/Book-vol1.ps},
}

	\bib{LS}{article}{
			author={Lada, T.},
			author={Stasheff, J.},
			title={Introduction to SH Lie algebras for physicists},
			journal={Internat. J. Theoret. Phys.},
			volume={32},
			date={1993},
			number={7},
			pages={1087--1103},
}
		
\bib{LMS}{article}{
   author={Lecomte, P. A. B.},
   author={Michor, P. W.},
   author={Schicketanz, H.},
   title={The multigraded Nijenhuis-Richardson algebra, its universal
   property and applications},
   journal={J. Pure Appl. Algebra},
   volume={77},
   date={1992},
   number={1},
   pages={87--102},
   issn={0022-4049},
}
		
\bib{Loday}{article}{
	author={Loday, J.-L.},
	title={Une version non commutative des alg\`ebres de Lie: les alg\`ebres de
				Leibniz},
	language={French},
	journal={Enseign. Math. (2)},
	volume={39},
	date={1993},
	number={3-4},
	pages={269--293},
}	
	
\bib{LPu}{article}{
   author={Loday, J.-L.},
   author={Pirashvili, T.},
   title={Universal enveloping algebras of Leibniz algebras and
   (co)homology},
   journal={Math. Ann.},
   volume={296},
   date={1993},
   number={1},
   pages={139--158},
   issn={0025-5831},
}	
	
\bib{LP}{article}{
		author={Loday, J.-L.},
		author={Pirashvili, T.},
		title={The tensor category of linear maps and Leibniz algebras},
		journal={Georgian Math. J.},
		volume={5},
		date={1998},
		number={3},
		pages={263--276},
		issn={1072-947X},
}
	
\bib{DAGX}{book}{
 author={Lurie, J.},
 title={Derived algebraic geometry X: Formal moduli problems},
 note={Unpublished lecture notes, available at http://people.math.harvard.edu/\textasciitilde lurie/papers/DAG-X.pdf},
}

\bib{DAGX2}{article}{
   author={Lurie, J.},
   title={Moduli problems for ring spectra},
   conference={
      title={Proceedings of the International Congress of Mathematicians.
      Volume II},
   },
   book={
      publisher={Hindustan Book Agency, New Delhi},
   },
   date={2010},
   pages={1099--1125},
}

\bib{Manetti-1}{article}{
   author={Manetti, M.},
   title={Deformation theory via differential graded Lie algebras},
   conference={
      title={Algebraic Geometry Seminars, 1998--1999 (Italian) (Pisa)},
   },
   book={
      publisher={Scuola Norm. Sup., Pisa},
   },
   date={1999},
   pages={21--48},
}

\bib{Manetti-2}{article}{
   author={Manetti, M.},
   title={Extended deformation functors},
   journal={Int. Math. Res. Not.},
   date={2002},
   number={14},
   pages={719--756},
   issn={1073-7928},
}

\bib{Manetti-3}{article}{
   author={Manetti, M.},
   title={On some formality criteria for DG-Lie algebras},
   journal={J. Algebra},
   volume={438},
   date={2015},
   pages={90--118},
   issn={0021-8693},
}

\bib{Mehta}{article}{
   author={Mehta, R.},
   title={Lie algebroid modules and representations up to homotopy},
   journal={Indag. Math. (N.S.)},
   volume={25},
   date={2014},
   number={5},
   pages={1122--1134},
   issn={0019-3577},
}

\bib{MSX}{article}{
   author={Mehta, R.},
   author={Sti{\'e}non, M.},
   author={Xu, P.},
   title={The Atiyah class of a dg-vector bundle},
   language={English, with English and French summaries},
   journal={C. R. Math. Acad. Sci. Paris},
   volume={353},
   date={2015},
   number={4},
   pages={357--362},
   issn={1631-073X},
}

\bib{Mostovoy}{article}{
	author={Mostovoy, J.},
	title={Racks as multiplicative graphs},
	journal={Homology Homotopy Appl.},
	volume={20},
	date={2018},
	number={2},
	pages={239--257},
	issn={1532-0073},
}

\bib{Mostovoy2}{article}{
   author={Mostovoy, J.},
   title={Differential graded Lie algebras and Leibniz algebra cohomology},
   journal={Int. Math. Res. Not. IMRN},
   date={2022},
   number={1},
   pages={196--209},
   issn={1073-7928},
}

\bib{NR}{article}{
   author={Nijenhuis, A.},
   author={Richardson, R. W., Jr.},
   title={Deformations of Lie algebra structures},
   journal={J. Math. Mech.},
   volume={17},
   date={1967},
   pages={89--105},
}

\bib{PTVV13}{article}{
   author={Pantev, T.},
   author={To\"{e}n, B.},
   author={Vaqui\'{e}, M.},
   author={Vezzosi, G.},
   title={Shifted symplectic structures},
   journal={Publ. Math. Inst. Hautes \'{E}tudes Sci.},
   volume={117},
   date={2013},
   pages={271--328},
   issn={0073-8301},
}

\bib{PaviaMixed}{article}{
   author={Pavia, E.},
   title={Mixed graded structure on Chevalley-Eilenberg functors},
   eprint={2207.12012},
}

\bib{Pridham}{article}{
   author={Pridham, J. P.},
   title={Unifying derived deformation theories},
   journal={Adv. Math.},
   volume={224},
   date={2010},
   number={3},
   pages={772--826},
   issn={0001-8708},
}

\bib{Quillenbook}{book}{
	author={Quillen, D.},
	title={Homotopical algebra},
	series={Lecture Notes in Mathematics, No. 43},
	publisher={Springer-Verlag, Berlin-New York},
	date={1967},
	pages={iv+156 pp. (not consecutively paged)},
}

\bib{Quillen1969}{article}{
   author={Quillen, D.},
   title={Rational homotopy theory},
   journal={Ann. of Math. (2)},
   volume={90},
   date={1969},
   number={2},
   pages={205--295},
   issn={0003-486X},
}

\bib{Quillen1985}{article}{
   author={Quillen, D.},
   title={Superconnections and the Chern character},
   journal={Topology},
   volume={24},
   date={1985},
   number={1},
   pages={89--95},
   issn={0040-9383},
}

\bib{Rogers}{article}{
   author={Rogers, C. L.},
   title={An explicit model for the homotopy theory of finite-type Lie
   $n$-algebras},
   journal={Algebr. Geom. Topol.},
   volume={20},
   date={2020},
   number={3},
   pages={1371--1429},
   issn={1472-2747},
}

\bib{Uchino}{article}{
   author={Uchino, K.},
   title={Derived brackets and sh Leibniz algebras},
   journal={J. Pure Appl. Algebra},
   volume={215},
   date={2011},
   number={5},
   pages={1102--1111},
   issn={0022-4049},
}		

\bib{Vallette}{article}{
   author={Vallette, B.},
   title={Homotopy theory of homotopy algebras},
   language={English, with English and French summaries},
   journal={Ann. Inst. Fourier (Grenoble)},
   volume={70},
   date={2020},
   number={2},
   pages={683--738},
   issn={0373-0956},
}
		
 \end{biblist}
\end{bibdiv}
\end{document}